    \def\qed{\hfill$\sqcap\kern-8.0pt\hbox{$\sqcup$}$\\}
    \def\beq{\begin{eqnarray}}
    \def\eeq{\end{eqnarray}}
    \def\beqq{\begin{eqnarray*}}
    \def\eeqq{\end{eqnarray*}}
    \def\re{\textnormal {Re}}
    \def\im{\textnormal {Im}}
    \def\p{{\mathbb P}}
    \def\e{{\mathbb E}}
    \def\r{{\mathbb R}}
    \def\c{{\mathbb C}}
    \def\pp{{\textnormal p}}	
    \def\d{{\textnormal d}}
    \def\i{{\textnormal i}}
    \def\plus{{\scriptscriptstyle +}}
    \def\minus{{\scriptscriptstyle -}}
    \def\phiqp{\phi_q^{\plus}}
    \def\phiqm{\phi_q^{\minus}}
    \def\ind{{\mathbb I}}
    \def\ll{{\mathcal L}} 
    \def\hh{{\mathcal H}}
    \def\pp{{\mathcal P}} 	
    \def\gg{{\mathcal G}} 
    \def\vv{{\textnormal v}} 
    \def\ee{{\textnormal e}} 
    \def\coeffa{{\textnormal a}} 
    \def\coeffb{{\textnormal b}} 
    \def\aa{{\mathbf A}} 
    \def\bb{{\mathbf B}} 
    \def\dd{{\mathbf D}} 
    \def\cc{{\mathbf C}} 
    \def\ii{{\mathbf I}} 
    \def\xsup{{\overline X}} 
    \def\xinf{{\underline X}} 
\newtheorem{theorem}{Theorem}
\newtheorem{lemma}{Lemma}
\newtheorem{proposition}{Proposition}
\newtheorem{corollary}{Corollary}
\theoremstyle{definition}
\newtheorem{definition}{Definition}
\newtheorem{remark}{Remark}
\title{
\textbf{Meromorphic L\'evy processes and their fluctuation identities}
%\textbf{Further results on overshoots and undershoots of L\'evy processes and applications}
}
\author{
\textbf{
A. Kuznetsov
\footnote{Department of Mathematics and Statistics, 
York University, 
4700 Keele Street, 
Toronto, Ontario, 
M3J 1P3, Canada. Email: kuznetsov@mathstat.yorku.ca}
}
,\, 
\textbf{
A. E. Kyprianou
\footnote{Department of Mathematical Sciences, University of Bath, Claverton Down, Bath, BA2 7AY, U.K.. Email: a.kyprianou@bath.ac.uk}
}, 
\, 
\textbf{J. C. Pardo
\footnote{Centro de Investigaci\'on en Matem\'aticas A.C. Calle Jalisco s/n. 36240 Guanajuato, M\'exico. Email: jcpardo@cimat.mx}
} 
}
\date{\footnotesize This version: \today}
\begin{document}
\maketitle
\begin{abstract}
The last couple of years has seen a remarkable number of new, explicit examples of the Wiener-Hopf factorization for L\'evy processes where previously there had been very few.  We mention in particular the many cases of spectrally negative L\'evy processes in \cite{HuKy, KyRi}, hyper-exponential  and generalized hyper-exponential L\'evy processes \cite{JP}, Lamperti-stable processes in \cite{CaCha, Caballero2008,  Chaumont2009, Patie2009}, Hypergeometric processes in \cite{KyPaRi, KyPavS, Caballeroetal2009}, $\beta$-processes in \cite{Kuz-beta} and $\theta$-processes in \cite{Kuz-theta}.

In this paper we introduce a new family of L\'evy processes, which we call Meromorphic L\'evy processes, or just $M$-processes for short, which overlaps with many of the aforementioned classes. A key feature of the $M$-class is the identification of their Wiener-Hopf factors as rational functions of infinite degree written in terms of poles and roots of the Laplace exponent, all of which are real numbers. The specific structure of the $M$-class Wiener-Hopf factorization enables us to explicitly handle a comprehensive suite of fluctuation identities that concern first passage problems for finite and infinite intervals for both the process itself as well as the resulting process when it is reflected in its infimum. Such identities are of fundamental interest given their repeated occurrence in various fields of applied probability such as mathematical finance, insurance risk theory and queuing theory.

\bigskip

\noindent {\it Keywords:} L\'evy processes, Wiener-Hopf factorization, exit problems, fluctuation theory.

\medskip

\noindent{\it AMS 2000 subject classifications: 60G51, 60G50.}
\end{abstract}

%**************************************************************************************************
%**************************************************************************************************
%**************************************************************************************************

\section{Introduction}
The theory of L\'evy processes forms the cornerstone of an enormous volume of mathematical literature which supports a wide variety of applied and theoretical stochastic models. 
One of the most obvious and fundamental problems that can be stated for a L\'evy process, particularly in relation to its role as a modelling tool, is the distributional characterization of the time at which a L\'evy process first exists either an infinite or finite interval together with its overshoot beyond the boundary of the interval.
 As a family of stochastic processes, L\'evy processes are now well understood  and the exit problem has seen many different approaches dating back to the 1960s. See for example \cite{Avrametal, Bertoin1997, Borovkov, Doney, Rogozin, Getoor, Chaumont2009, KyPaRi} to name but a few.
 
 Despite the maturity of this field of study it is surprising to note that, until very recently, there were less than a handful of examples for which explicit analytical detail concerning the first exit problem could be explored. Given the closeness in mathematical proximity of the first exit problem to the characterization of the Wiener-Hopf factorization, one might argue that the lack of concrete examples of the former was a consequence of the same being true for the latter.  The landscape for both the Wiener-Hopf factorization problem and the first exit problem has changed quite rapidly in the last couple of years however with the discovery of a number of new, mathematically tractable families of L\'evy processes.  We mention in particular the many cases of spectrally negative L\'evy processes in \cite{HuKy, KyRi}, hyper-exponential  and generalized hyper-exponential L\'evy processes \cite{JP}, Lamperti-stable processes in \cite{CaCha, Caballero2008, Chaumont2009}, Hypergeometric processes in \cite{KyPaRi, KyPavS}, $\beta$-processes in \cite{Kuz-beta} and $\theta$-processes in \cite{Kuz-theta}.

In this paper we introduce a new family of L\'evy processes, which we call Meromorphic L\'evy processes, or just $M$-class for short, that overlaps with many of the aforementioned classes. Our definition of the $M$-class of processes will allow us to drive features of their Wiener-Hopf factors  through to many of the fluctuation identities which are of pertinence for a wide variety of applications.
In the theory of actuarial mathematics, the problem of  first exit from a half-line is of fundamental interest with regard to the classical ruin problem and is typically studied within the context of an expected discounted penalty function. The latter is also known as the Gerber-Shiu function following the first article \cite{MR1604928} of a long series of papers found within the actuarial literature. In the setting of financial mathematics, the first exit of a L\'evy process, as well as a L\'evy process reflected in its infimum, from an interval is of interest in the pricing of barrier options and American-type options,  \cite{Alili2005}, as well as certain credit risk models, \cite{hilberink, surya}. In queueing theory  exit problems for L\'evy processes play a central role in understanding  the  trajectory of the workload during busy periods as well as in relation to buffers,  \cite{mandjes, konstetal}. Many optimal stopping strategies also turn out to boil down to first  passage problems for both L\'evy processes and L\'evy processes reflected in their infimum; classic examples of which include McKean's optimal stopping problem, \cite{McKean}, as well as the Shepp-Shiryaev optimal stopping problem, \cite{Avrametal, BaKy} .

 It is not our purpose however to dwell on these applications. As alluded to above, the main objective will be expose a comprehensive suite of fluctuation identities in explicit form for the $M$-class of L\'evy processes. We thus conclude the introduction with an overview of the paper. 
 
 \bigskip
 
In the next section we give a formal definition of meromorphic L\'evy processes, dwelling in particular on their relationship with discrete completely monotone functions. In Section \ref{examples} we consider several classes of existing families of L\'evy processes that have appeared in recent literature. Next, in Section \ref{one-sided} we establish explicit identities for the exponentially discounted first passage problem. In particular we deal with (what is known in the actuarial literature as) the Gerber-Shiu measure describing the discounted joint triple law of the overshoot, undershoot and undershoot of the maximum at first passage over a level as well as the marginal thereof which specifies the law of the discounted overshoot.  It is important to note that the discounting factor which appears in all of the identities means that one is never more than a Fourier transform away from the naturally associated space-time identity in which the additional law of the time to first passage is specified. %For example..... {\red I DESCRIBED A COUPLE OF EXAMPLES IN THE NUMERICAL SECTION}
This last Fourier inversion appears to be virtually impossible to produce analytically within the current context, but the expressions we offer are not difficult to work with in conjunction with straightforward Fourier inversion algorithms. 

In Section \ref{sec_dble_exit} we look at the more complicated two-sided exit problem. Inspired by a technique of Rogozin \cite{Rogozin} we  solve a system of equations which characterize the discounted overshoot distribution on either side of the interval in question. The same technique also delivers explicit expressions for the discounted entrance law into an interval. In Section \ref{ladder} we look in analytical detail at what can be said of the ascending and descending ladder variables. In particular we offer expressions for their joint Laplace exponent and jump measure. Section \ref{more} mentions some additional examples of fluctuation identities which enjoy explicit detail and finally in Section \ref{sec_numerics} we describe some numerical experiments in order to exhibit the genuine practical applicability of our method.

\section{Meromorphic L\'evy processes}

Recall that a general one-dimensional L\'evy process is a stochastic process issued from the origin with stationary and independent increments and almost sure right continuous paths. We write $X=\{X_t : t\geq 0\}$ for its trajectory and $\mathbb{P}$ for its law. 
As $X$ is necessarily a strong Markov process, for each $x\in\mathbb{R}$, we will need the probability 
$\mathbb{P}_x$  to denote the law of $X$ when issued from $x$ with the understanding that $\mathbb{P}_0 = \mathbb{P}$. The law $\mathbb{P}$ of L\'evy processes is characterized by its one-time transition probabilities. In particular there always exist a  triple $(a, \sigma, \Pi)$ where $a\in\mathbb{R}$, $\sigma\in \mathbb{R}$ and $\Pi$ is a measure on $\mathbb{R}\backslash\{0\}$ satisfying the integrability condition $\int_{\mathbb{R}}(1\wedge x^2)\Pi({\rm d}x)<\infty$, such that, for all $z \in\mathbb{R}$ 
\beq\label{eq1}
\mathbb{E}[e^{\i z X_t}] = e^{ t \psi(\i z)},
\eeq
where the Laplace exponent $\psi(z)$ is given by the L\'evy-Khintchine formula
\beq\label{def_psi}
\psi(z) = \frac{1}{2}\sigma^2 z^2 +  \mu z + \int_{\mathbb{R}}\left( e^{ z x} -1- z x h(x)\right)\Pi({\rm d}x).
\eeq
Here $h(x)$ is the cutoff function which is usually taken to be $h(x)\equiv  \ind_{\{|x|<1\}}$.  However, everywhere in the present paper we will work
with the L\'evy measures $\Pi(\d x)$ which have exponentially decaying tails, thus we will take $h(x)\equiv 1$.  Note, that the exponential decay of the tails of the L\'evy measure also implies that the Laplace exponent can be analytically continued into some vertical strip 
$a<\re(z)<b$ for $a<0<b$. 

Everywhere in this paper we will denote the tails of the L\'evy measure as $\bar \Pi^+(x)=\Pi((x,\infty))$ and $\bar \Pi^-(x)=\Pi((-\infty,-x))$ for $x>0$.  Let us define the supremum/infimum processes $\overline{X}_t = \sup_{s\leq t}X_s$, $\underline{X}_t= \inf_{s\leq t}X_s$ and denote by $\ee(q)$ an independent, exponentially distributed random variable with rate $q>0$. Finally, the first passage time above $x$ is defined as
$\tau_x^+=\inf\{t>0: X_t>x\}$, and similarly $\tau_y^-=\inf\{t>0: X_t<y\}$. 

Let us recall some basic facts about completely monotone functions. A function $f: (0,\infty) \mapsto \r$ is called {\it completely monotone} if $f \in {\mathcal C}^{\infty}$  and 
$(-1)^n f^{(n)}(x)\ge 0$ for all $n\in {\mathbb N}\cup\{0\}$ and $x>0$. According to 
Bernstein theorem (see Theorem 1.4 in \cite{Schilling}), the function $f$ is completely monotone if and only if it can be represented as
the Laplace transform of a positive measure on $[0,\infty)$:
\beq\label{Bernstein_thm}
f(x)=\int\limits_{[0,\infty)} e^{-zx} \mu (\d z), \;\;\; x>0.
\eeq 
Note that if $f(0^+)=1$, then $\mu(\d x)$ is a probability measure and $1-f(x)$ is the cumulative distribution function of a positive infinitely divisible 
random variable $Z$, whose distribution is a mixture of exponential distributions. We will denote the class of completely monotone functions as ${\mathcal {CM}}$. 

 Next, let us introduce a subclass of completely monotone functions, which 
will be important for us later. We will call  $f: (0,\infty)\mapsto \r$ a {\it discrete completely monotone} function if the measure $\mu(\d z)$ in the representation
(\ref{Bernstein_thm}) is discrete, and the support of the measure $\mu(\d z)$ is infinite and does not have finite accumulation points. This implies that the measure $\mu(\d z)$ is an infinite mixture of atoms of size $a_n$ at the points $b_n$
\beqq
\mu(\d z)=\sum\limits_{n\ge 1} a_n \delta_{b_n}(\d z),
\eeqq
where $\delta_{b}(\d x)$ denotes the Dirac measure at $x=b$, for all $n\ge 1$ we have $a_n>0$, $b_n\ge 0$ and $b_n \to \infty$ as $n\to \infty$ 
(without loss of generality we can also assume that the sequence $\{b_n\}_{n\ge 1}$ is strictly increasing). 
From (\ref{Bernstein_thm}) it follows then that any discrete completely monotone function 
can be represented as an infinite series of exponential functions
\beq
f(x)=\sum\limits_{n\ge 1} a_n e^{-b_n x}, \;\;\; x>0. 
\eeq
We will denote the class of discrete completely monotone functions as ${\mathcal {DCM}}$.

\begin{definition}[$M$-class]\label{maindef} A L\'evy process $X$ is said to belong to the Meromorophic class ($M$-class) if 
$\bar \Pi^+(x), \bar \Pi^-(x) \in {\mathcal {DCM}}$.
\end{definition}

We see that according to our definition of the discrete completely monotone functions, the process is 
Meromorphic if and only if the L\'evy measure $\Pi(\d x)$ has
a density with respect to the Lebesgue measure, given by
\beq\label{def_pi}
\pi(x)=\ind_{\{x>0\}} \sum\limits_{n\ge 1} a_n \rho_n e^{-\rho_n x}+ \ind_{\{x<0\}} \sum\limits_{n\ge 1} \hat a_n \hat \rho_n e^{\hat \rho_n x},
\eeq 
 where all the coefficients  $a_n$, $\hat a_n$, $\rho_n$, $\hat \rho_n$ are positive, 
the sequences $\{\rho_n\}_{n \ge 1}$ and $\{\hat \rho_n\}_{n\ge 1}$ are stricly increasing, and $\rho_n \to +\infty$ and $\hat \rho_n \to +\infty$ as $n\to +\infty$.

\begin{proposition}\label{integrability}
Assume that $\pi(x)$ is given by (\ref{def_pi}).
 The integral $\int_{\r} x^2 \pi(x) \d x$ converges if and only if both series $\sum_{n\ge 1} a_n \rho_n^{-2}$ and 
$\sum_{n\ge 1} \hat a_n \hat \rho_n^{-2}$ converge.
\end{proposition}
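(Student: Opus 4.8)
The plan is to compute the integral $\int_{\r} x^2 \pi(x)\,\d x$ by splitting it into the positive and negative half-lines, and then to interchange summation and integration using Tonelli's theorem since every term is nonnegative. For the positive part we have
\beqq
\int_0^\infty x^2 \pi(x)\,\d x = \int_0^\infty x^2 \sum_{n\ge 1} a_n \rho_n e^{-\rho_n x}\,\d x
= \sum_{n\ge 1} a_n \rho_n \int_0^\infty x^2 e^{-\rho_n x}\,\d x,
\eeqq
and since $\int_0^\infty x^2 e^{-\rho x}\,\d x = 2\rho^{-3}$ for $\rho>0$, this equals $2\sum_{n\ge 1} a_n \rho_n^{-2}$. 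The analogous computation on $(-\infty,0)$, using the substitution $x\mapsto -x$, gives $2\sum_{n\ge 1} \hat a_n \hat\rho_n^{-2}$. Hence $\int_{\r} x^2 \pi(x)\,\d x = 2\big(\sum_{n\ge 1} a_n \rho_n^{-2} + \sum_{n\ge 1} \hat a_n \hat\rho_n^{-2}\big)$, and since all terms are positive, the left-hand side is finite if and only if both series on the right converge. This is essentially the whole argument.

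The one genuine point requiring care — and the step I would treat as the main (minor) obstacle — is the justification of the interchange of $\sum$ and $\int$. Because $\pi(x)\ge 0$ and each summand $a_n\rho_n e^{-\rho_n x}\ge 0$ on $(0,\infty)$, Tonelli's theorem applies verbatim to the counting measure on $\mathbb{N}$ and Lebesgue measure on $(0,\infty)$, so the interchange is unconditionally valid in $[0,\infty]$; no separate convergence hypothesis is needed to swap, only to conclude finiteness. I would state this explicitly so the reader sees that the equivalence "both series converge $\iff$ the integral converges" is immediate once the interchange is in place. One should also note in passing that $\pi$ is a genuine (locally integrable, measurable) function on $\r\setminus\{0\}$ so that the integral is well-defined as an element of $[0,\infty]$; this is clear from the monotone convergence of partial sums.

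It is worth remarking that the proposition together with the standing integrability requirement $\int_{\r}(1\wedge x^2)\Pi(\d x)<\infty$ from the L\'evy--Khintchine formula shows that for an $M$-process the summability conditions $\sum_{n\ge 1} a_n \rho_n^{-2}<\infty$ and $\sum_{n\ge 1}\hat a_n\hat\rho_n^{-2}<\infty$ are exactly what must be imposed on the parameters $(a_n,\rho_n,\hat a_n,\hat\rho_n)$, since near the origin $x^2\pi(x)$ already controls the full second moment of the density (the tails decay exponentially, so the behaviour at infinity is harmless). I would close the proof by recording this observation, as it is the form in which the condition will be used in the sequel.

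\qed
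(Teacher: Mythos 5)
Your proof is correct and rests on the same idea as the paper's own argument: interchanging sum and integral over nonnegative terms (Tonelli in your write-up, the Monotone Convergence Theorem in the paper). The only difference is that you evaluate $\int_0^\infty x^2 e^{-\rho_n x}\,\d x = 2\rho_n^{-3}$ explicitly and thereby get both directions at once from the exact identity $\int_{\r} x^2\pi(x)\,\d x = 2\bigl(\sum_{n\ge 1} a_n\rho_n^{-2}+\sum_{n\ge 1}\hat a_n\hat\rho_n^{-2}\bigr)$, whereas the paper outsources the ``if'' direction to a citation; your version is self-contained but mathematically the same.
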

\begin{proof}
The "if" part  was established in Proposition 1 in \cite{Kuz-theta}. The "only if" part follows from (\ref{def_pi}) and the Monotone Convergence Theorem. 
\end{proof}

Recall, that a function $g: \c \mapsto \c \cup \{\infty\}$ is called {\it meromorphic} if it does not have any other singularities in the open complex plane except for poles. A function $g(z)$ is  called {\it real meromorphic} function if it is meromorphic and $g(z) \in \r \cup \{\infty\}$ for all $z\in \r$, 
or equivalently, if ${\overline{g(z)}}=g(\bar z)$.  

\begin{theorem}\label{thm_equivalent}
 The following conditions are equivalent:
\begin{itemize}
 \item[(i)]   $X$ is Meromorphic.
 \item[(ii)]  $\bar \Pi^+(x), \bar \Pi^-(x) \in {\mathcal {CM}}$ and the Laplace exponent $\psi(z)$ is meromorphic.
 \item[(iii)] For some $q>0$ (and then, for every $q>0$) the functions $\p(\xsup_{\ee(q)}>x)$ and $\p(-\xinf_{\ee(q)}>x)$ restricted to $x>0$ belong to the class ${\mathcal {DCM}}$.
 \item[(iv)]  For some $q>0$ (and then, for every $q>0$) the functions $\p(X_{\ee(q)}>x)$ and $\p(-X_{\ee(q)}>x)$ restricted to $x>0$ belong to the class ${\mathcal {DCM}}$.
 \item[(v)]   For some $q>0$ (and then, for every $q>0$) we have the factorization
 \beq\label{WH_fact_1}
 q-\psi(z)=q\prod\limits_{n\ge 1} \frac{1-\frac{z}{\zeta_n}}{1-\frac{z}{\rho_n}}
\prod\limits_{n\ge 1} \frac{1+\frac{z}{\hat \zeta_n}}{1+\frac{z}{\hat \rho_n}}, \;\;\; z\in \c,
 \eeq
where all roots $\{\zeta_n, - \hat \zeta_n\}$ of $\psi(z)-q$ are real and interlace with the poles $\{\rho_n, - \hat \rho_n\}$
 \beq\label{interlacing_prop}
 ... -\hat\rho_2 <-\hat\zeta_2<-\hat\rho_1<- \hat\zeta_1 <0 < \zeta_1 <  \rho_1 < \zeta_2 < \rho_2 < ...
 \eeq
 \item[(vi)] The Laplace exponent $\psi(z)$ is a real meromorphic function, which satisfies $\im(\psi(z)/z)>0$ for all $z$ in the half-plane $\im(z)>0$.
\end{itemize}
\end{theorem}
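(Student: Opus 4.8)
\noindent\emph{Proof strategy.} The plan is to run the cycle
$$(\mathrm{i})\Longleftrightarrow(\mathrm{ii})\Longleftrightarrow(\mathrm{vi}),\qquad(\mathrm{i})\Longrightarrow(\mathrm{v})\Longrightarrow(\mathrm{iii})\Longrightarrow(\mathrm{iv})\Longrightarrow(\mathrm{i}),$$
treating the first group by complex analysis and the second by the Wiener-Hopf factorization.

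\smallskip
\noindent\emph{Analytic equivalences.} From (i), substituting (\ref{def_pi}) into (\ref{def_psi}) with $h\equiv1$ and integrating term by term (licensed by Proposition \ref{integrability}) yields
$$\psi(z)=\tfrac12\sigma^2z^2+\mu z+\sum_{n\ge1}a_n\,\frac{z^2}{\rho_n(\rho_n-z)}+\sum_{n\ge1}\hat a_n\,\frac{z^2}{\hat\rho_n(\hat\rho_n+z)},\qquad z\in\c,$$
a series converging locally uniformly off $\{\rho_n,-\hat\rho_n\}$, which exhibits $\psi$ as a real meromorphic function with simple poles exactly there; with the completely monotone shape of $\bar\Pi^\pm$ this gives (ii). Since $\psi(z)/z=\tfrac12\sigma^2z+\mu+\sum_n a_n\big((\rho_n-z)^{-1}-\rho_n^{-1}\big)+\sum_n\hat a_n\big((-\hat\rho_n-z)^{-1}-(-\hat\rho_n)^{-1}\big)$ and $\im\big(z/(\rho_n-z)\big)=\rho_n\im(z)|\rho_n-z|^{-2}>0$, $\im\big(z/(\hat\rho_n+z)\big)=\hat\rho_n\im(z)|\hat\rho_n+z|^{-2}>0$, $\im(\tfrac12\sigma^2 z)\ge0$, summing gives $\im(\psi(z)/z)>0$ on $\{\im z>0\}$, which is (vi). Conversely, (vi) says $F(z):=\psi(z)/z$ is a meromorphic Nevanlinna-Pick function with $F(0)=\psi'(0)$ finite, hence admits the canonical form $F(z)=\alpha z+\beta+\sum_n\kappa_n\big((x_n-z)^{-1}-x_n^{-1}\big)$ with $\alpha\ge0$, $\kappa_n>0$ and real poles $x_n$ accumulating only at infinity (the last point because $\psi$ is meromorphic on all of $\c$); matching against the L\'evy-Khintchine form and using uniqueness of the L\'evy triple recovers a density of the shape (\ref{def_pi}), i.e.\ (i). The identical atomicity argument applied to the completely monotone representing measures of $\bar\Pi^\pm$, whose Laplace transforms appear in $\psi$, gives (ii)$\Rightarrow$(i).

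\smallskip
\noindent\emph{Probabilistic equivalences.} Here I would use the Wiener-Hopf factorization $q/(q-\psi(\i z))=\phiqp(z)\phiqm(z)=\e[e^{\i z X_{\ee(q)}}]$, with $\phiqp(z)=\e[e^{\i z\xsup_{\ee(q)}}]$ analytic and bounded on $\{\im z\ge0\}$, $\phiqm$ likewise on $\{\im z\le0\}$, both $=1$ at $z=0$, together with $X_{\ee(q)}\overset{d}{=}\xsup_{\ee(q)}+\xinf'_{\ee(q)}$ for an independent copy $\xinf'_{\ee(q)}$. For (i)$\Rightarrow$(v): $F=\psi/z$ has negative residues at its real poles, hence is strictly increasing between them ($F'(x)=\alpha+\sum_n\kappa_n(x_n-x)^{-2}>0$), so $F(z)-q/z$ is strictly increasing and runs from $-\infty$ to $+\infty$ on each interval of $\r\setminus(\{\rho_n,-\hat\rho_n\}\cup\{0\})$; thus $\psi(z)=q$ has exactly one real solution there, and these solutions $\{\zeta_n,-\hat\zeta_n\}$ interlace with $\{\rho_n,-\hat\rho_n\}$ as in (\ref{interlacing_prop}). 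The interlacing gives $\sum_n(\zeta_n^{-1}-\rho_n^{-1})\le\zeta_1^{-1}<\infty$ and its mirror image, so both products in (\ref{WH_fact_1}) converge; matching poles and (simple) zeros, the quotient of $q-\psi$ by $q$ times the product is entire and zero-free, and a growth comparison (Hadamard-type) forces it to equal its value $1$ at the origin --- which also shows $\psi-q$ has no further zeros. For (v)$\Rightarrow$(iii): writing the product as $P^+(z)P^-(z)$ (with $P^+$ carrying $\{\zeta_n,\rho_n\}$, $P^-$ carrying $\{-\hat\zeta_n,-\hat\rho_n\}$), $1/P^+(\i z)$ is analytic and bounded on $\{\im z\ge0\}$ --- the bound $|1-\i z/\rho_n|\le|1-\i z/\zeta_n|$ for $\im z\ge0$ coming from $\zeta_n<\rho_n$ --- and $1/P^-(\i z)$ likewise on $\{\im z\le0\}$, so by uniqueness of the Wiener-Hopf factors $\phiqp(z)=1/P^+(\i z)$, $\phiqm(z)=1/P^-(\i z)$; a partial-fraction expansion of $1/P^+(\i z)$ in the variable $\i z$, whose coefficients are positive by the interlacing, inverts to $\p(\xsup_{\ee(q)}>x)=\sum_n c_n e^{-\zeta_n x}$ for $x>0$, and symmetrically for $-\xinf_{\ee(q)}$, both in $\mathcal{DCM}$. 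For (iii)$\Rightarrow$(iv): conditioning on $\xinf'_{\ee(q)}$ gives, for $x>0$,
$$\p(X_{\ee(q)}>x)=\int_{[0,\infty)}\p\big(\xsup_{\ee(q)}>x+s\big)\,\p\big(-\xinf'_{\ee(q)}\in\d s\big)=\sum_n c_n\Big(\textstyle\int_{[0,\infty)}e^{-\zeta_n s}\,\p(-\xinf'_{\ee(q)}\in\d s)\Big)e^{-\zeta_n x}\in\mathcal{DCM},$$
and likewise for the left tail. For (iv)$\Rightarrow$(i): the $\mathcal{DCM}$ tails make $M(z):=\e[e^{\i z X_{\ee(q)}}]$ real meromorphic, so $q-\psi(\i z)=q/M(z)$ forces $\psi$ meromorphic; combining with complete monotonicity of the tail of $\xsup_{\ee(q)}$ (read off from $\phiqp$) and the transfer of complete monotonicity between $\Pi$ and the ascending/descending ladder-height L\'evy measures (Vigon-type \'equations amicales) yields $\bar\Pi^\pm\in\mathcal{CM}$, hence (i) by (ii)$\Leftrightarrow$(i).

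\smallskip
\noindent\emph{Where the difficulty lies.} The delicate steps are: (a) the Hadamard/Liouville argument in (i)$\Rightarrow$(v), which requires matching the growth of $\psi$ (of order $2$, like $\tfrac12\sigma^2 z^2$ or slower along rays avoiding the poles) against the order of the infinite products, using the interlacing to control $|P^\pm|$; and (b) closing the cycle at (iv)$\Rightarrow$(i), where recovering the $\mathcal{DCM}$ structure of the \emph{L\'evy measure} from that of the exponentially-sampled law relies, beyond meromorphy of $\psi$, on uniqueness of the meromorphic Wiener-Hopf factorization and on the ladder-height identities. All other steps are routine work with the explicit series and partial fractions.
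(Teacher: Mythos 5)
Your overall architecture (a cycle through the six conditions built on the explicit partial-fraction form of $\psi$, the Herglotz property of $\psi(z)/z$, and the Wiener--Hopf factorization) matches the paper's, and several of your local arguments are sound and in places more self-contained than the original: the direct verification of $\im(\psi(z)/z)>0$ term-by-term from the series (where the paper cites Rogers), the monotonicity-between-poles argument producing the interlacing in (i)$\Rightarrow$(v) (where the paper cites Levin --- though note that the product identity itself, which you defer to a ``Hadamard-type'' growth comparison, is precisely the content of the Levin theorem and is not free), and the explicit conditioning computation for (iii)$\Rightarrow$(iv), which is cleaner than the paper's appeal to closure of $\mathcal{DCM}$ under convolution.

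The genuine gap is in (iv)$\Rightarrow$(i), the step that closes your cycle. From (iv) you correctly extract that $M(z)=\e[e^{\i zX_{\ee(q)}}]$ is meromorphic and hence so is $\psi$. But you then assert that complete monotonicity of the tail of $\xsup_{\ee(q)}$ can be ``read off from $\phiqp$'' and fed into Vigon's \'equations amicales. At this point in your cycle you only have two-sided information about the law of $X_{\ee(q)}$; recovering one-sided information about $\xsup_{\ee(q)}$ (or about the ascending ladder-height L\'evy measure, which is what the \'equation amicale invers\'ee actually requires) is a deconvolution of the Wiener--Hopf identity, and it is exactly as hard as proving (iii) or (v) --- which you do not yet have. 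As written the argument is circular, or at best rests on an unproved identification of the meromorphic continuations of $\phiqpm$ from the pole set of $M$ alone (you would need to know the zeros of $M$ split between the half-planes, which does not follow from (iv)). The paper circumvents this with a short trick you are missing: set $\tilde\psi(z)=z^2\e[e^{zX_{\ee(q)}}]$; the $\mathcal{DCM}$ form (\ref{u^q}) shows $\tilde\psi$ has exactly the shape (\ref{eq_psi}) of a Meromorphic Laplace exponent, so by the already-proved implication (i)$\Rightarrow$(vi) one gets $\im(\tilde\psi(z)/z)>0$ on the upper half-plane; the identity $(\psi(z)-q)/z=-qz/\tilde\psi(z)$ then transfers the Herglotz property to $(\psi-q)/z$, and the Chebotarev/Nevanlinna representation yields (\ref{eq_psi}) for $\psi$ itself, hence (i). Replacing your Vigon paragraph with this argument repairs the proof; you should also add the one-line bookkeeping at the end showing that validity of (iii), (iv) or (v) for a single $q>0$ forces (i) and hence validity for all $q>0$.
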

\begin{proof}
The main ideas and tools needed for the proof of this theorem come from the proofs of Theorem 2 in \cite{Rogers01071983} and Theorem 1 in \cite{Kuz-theta}. 

\noindent
{\it (i)$\Rightarrow$(ii)}  Using (\ref{def_pi}) and (\ref{def_psi}) we find that 
 \beq\label{eq_psi}
 \psi(z)=\frac 12 \sigma^2 z^2 + \mu z
 +z^2 \sum\limits_{n\ge 1}\frac{a_n }{\rho_n(\rho_n- z)}+z^2 \sum\limits_{n\ge 1} \frac{\hat a_n}{\hat \rho_n (\hat \rho_n+z)}
, \;\;\; z\in \c,
 \eeq
which shows that $\psi(z)$ is a meromorphic function. Since ${\mathcal {DCM}}\subset {\mathcal {CM}}$ this proves (ii).

\noindent
{\it (ii)$\Rightarrow$(vi)} We know that $\psi(z)$ is a meromorphic function, and that $\psi(0)=0$, therefore $\psi(z)$ is analytic in some neighbourhood
of zero. From the proof of Theorem 2 in \cite{Rogers01071983} we find that (a) $\psi(z)$ can be analytically continued in the half-planes $\im(z)>0$ and $\im(z)<0$, 
(b) $\im(\psi(z)/z)>0$ for all $z$ in the half-plane $\im(z)>0$. 
Since $\overline{ \psi(z)}=\psi(\bar z)$ we conclude that  $\psi(z)$ is a real meromorphic function. Using this fact and the above statement (b) we obtain (vi).

\noindent
 {\it (vi)$\Rightarrow$(v),  $\forall q>0$.} For $q>0$ and $\im(z)>0$ it is true that $\im(-q/z)>0$, thus (vi) implies that $\im((\psi(z)-q)/z)>0$ for all $z$ in the half-plane $\im(z)>0$. Using Theorem 1 on page 220 in  \cite{Levin1996} 
(the statement of this result can also be found in the proof of Theorem 1 in \cite{Kuz-theta}) we find that (v) is valid for all $q>0$.

\noindent
{\it (v)$\Rightarrow$(iii)} This follows from Theorem 1 in \cite{Kuz-theta}.

\noindent
{\it (iii)$\Rightarrow$(iv)} One can also check that 
if two functions $f(x)$ and $g(-x)$ belong to the class ${\mathcal {DCM}}$, then the same is true for $\ind_{\{x>0\}}(f*g)(x)$ and  $\ind_{\{x<0\}}(f*g)(x)$, where $f*g$ is the convolution. The result (iv) then follows easily from this fact and the Wiener-Hopf decomposition which says that $X_{\ee(q)}\stackrel{d}{=} Y_1+ Y_2$,
where the random variables $Y_1$ and $Y_2$ are independent, $Y_1\stackrel{d}{=} \xsup_{\ee(q)}$ and $Y_2\stackrel{d}{=} \xinf_{\ee(q)}$. 

\noindent
{\it (iv)$\Rightarrow$(i)} From (iv) and the definition of the ${\mathcal {DCM}}$ class 
we know that for some $q>0$ there exist $\alpha_0\ge 0$ and positive constants $\alpha_n$, $\beta_n$, $\hat \alpha_n$, $\hat \beta_n$ such that
\beq\label{u^q}
\p(X_{\ee(q)}\in \d x)=\alpha_0 \delta_0(\d x)+ \left[\ind_{\{x>0\}} \sum\limits_{n\ge 1} \alpha_n \beta_n^{-1} e^{-\beta_n x}+ \ind_{\{x<0\}} \sum\limits_{n\ge 1} \hat \alpha_n 
\hat \beta_n^{-1}  e^{\hat \beta_n x} \right] \d x.
\eeq  
Note that the condition $\p(X_{\ee(q)}\in \r)=1$ implies that both series $\sum_{n\ge 1} \alpha_n \beta_n^{-2}$ and $\sum_{n\ge 1} \hat \alpha_n \hat \beta_n^{-2}$ converge.
Let us define $\tilde \psi(z)=z^2 \e[\exp(z X_{\ee(q)})]$. Using (\ref{u^q}) we obtain
\beqq
\tilde \psi(z)=\alpha_0 z^2 + z^2 \sum\limits_{n\ge 1}\frac{\alpha_n }{\beta_n(\beta_n- z)}+z^2 \sum\limits_{n\ge 1} \frac{\hat \alpha_n}{\hat \beta_n(\hat \beta_n+z)}
, \;\;\; z\in \c.
\eeqq
Comparing the above formula with (\ref{eq_psi}) we conclude that $\tilde \psi(z)$ is a Laplace exponent of a 
Meromorphic L\'evy process. We have already proved that (i) implies (vi), therefore $\im(\tilde \psi(z)/z )>0$ for all $z$ in the half-plane $\im(z)>0$. 

Next, using the definition of the Laplace exponent (\ref{eq1}) it is easy to verify that $\e[\exp(z X_{\ee(q)})]=q/(q-\psi(z))$. Therefore 
\beqq
\frac{\psi(z)-q}{z}=-\frac{qz}{\tilde \psi(z)}.
\eeqq 
As we have already established,  $\im(\tilde \psi(z)/z )>0$ for all $z$ in the half-plane  $\im(z)>0$. Using this fact and the above identity we conclude that
 $\im((\psi(z)-q)/z )>0$. Applying Theorem 1 on page 197 in \cite{Chebotarev1949} 
(the statement of this result can also be found in the proof of Theorem 1 in \cite{Kuz-theta}) we find that $\psi(z)$ admits a representation of the form (\ref{eq_psi}), which in turn implies that the process $X$ is meromorphic. 

Finally, note that if (v) is true for some $q>0$, then (iii) and (iv) are valid for the same value of $q>0$. But as we have already demonstrated, any of the conditions
(iii), (iv), (v) implies (i) which is equivalent to (v) being valid for all $q>0$. Thus, if one of the conditions (iii), (iv), (v)  is valid for some $q>0$, 
then it must be valid for all $q>0$.  
\end{proof}

Statement (ii) in Theorem \ref{thm_equivalent} shows that the $M$-class of L\'evy processes might also be called 
"processes with completely-monotone L\'evy measure and meromorphic Laplace exponent", which explains the origin of the name "Meromorphic L\'evy processes". 
Note, however, that there exist L\'evy processes with meromorphic Laplace exponent but not completely monotone L\'evy measure.

The following technical result on partial fraction decomposition of infinite products will be very important for us later. 

\begin{lemma}\label{analytical_lemma}
 Assume that we have two increasing sequences $\rho=\{\rho_n\}_{n\ge 1}$ and $\zeta=\{\zeta_n\}_{n \ge 1}$ of positive numbers, such that
 $\rho_n\to +\infty$ as $n\to +\infty$ and the following interlacing condition is satisfied:
 \beq\label{interlacing}
 \zeta_1< \rho_1 < \zeta_2 < \rho_2 < ...
 \eeq
 Define
 \beq\label{def_phi}
\phi(z)= \prod\limits_{n\ge 1}  \frac{1+\frac{z}{\rho_n}}{1+\frac{z}{\zeta_n}}, \;\;\; z>0.
\eeq
Then for all $z>0$
 \beq\label{part_fraction1}
  \phi(z) &=&\coeffa_0(\rho,\zeta)+
\sum\limits_{n\ge 1} \coeffa_n(\rho,\zeta) \frac{\zeta_n}{\zeta_n+z}, \\
 \label{part_fraction2}
 \frac{1}{\phi(z)} &=&1+z\coeffb_0(\zeta,\rho)+
\sum\limits_{n\ge 1} \coeffb_n(\zeta,\rho) \left[1-\frac{\rho_n}{\rho_n+z}\right],
 \eeq 
where 
 \beq\label{formula_cn_a_b}
 \coeffa_0(\rho,\zeta)&=&\lim_{n\to +\infty}\prod\limits_{k=1}^n \frac{\zeta_k}{\rho_k}, \;\;\;
\coeffa_n(\rho,\zeta)=\left(1-\frac{\zeta_n}{\rho_n}\right) \prod\limits_{\substack{k\ge 1 \\ k\ne n}}  \frac{1-\frac{\zeta_n}{\rho_k}}{1-\frac{\zeta_n}{\zeta_k}}, \\
\label{formula_cn_b_a}
 \coeffb_0(\zeta,\rho)&=&\frac{1}{\zeta_1}\lim_{n\to +\infty}\prod\limits_{k=1}^n \frac{\rho_k}{\zeta_{k+1}}, \;\;\;
\coeffb_n(\zeta,\rho)=-\left(1-\frac{\rho_n}{\zeta_n}\right) \prod\limits_{\substack{k\ge 1 \\ k\ne n}} 
 \frac{1-\frac{\rho_n}{\zeta_k}}{1-\frac{\rho_n}{\rho_k}}.
 \eeq
 Moreover, $\coeffa_0(\rho,\zeta)\ge 0$, $\coeffb_0(\zeta,\rho)\ge 0$ and for all $n\ge 1$ we have $\coeffa_n(\rho,\zeta) > 0$, $\coeffb_n(\zeta,\rho) > 0$.
\end{lemma}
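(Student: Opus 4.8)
The plan is to obtain the partial fraction decompositions by exploiting the structure of $\phi(z)$ as a meromorphic function with simple poles at $z=-\zeta_n$ and simple zeros at $z=-\rho_n$, together with its growth behaviour at infinity. First I would verify that the infinite product defining $\phi(z)$ converges. The interlacing condition \eqref{interlacing} forces the ratios $\zeta_n/\rho_n$ and $\rho_n/\zeta_{n+1}$ to lie in $(0,1)$, and one needs to check that the products in \eqref{formula_cn_a_b}--\eqref{formula_cn_b_a} converge to nonnegative limits; this is where the crucial estimate that the $\rho_n$ (hence the $\zeta_n$) tend to infinity comes in, guaranteeing $\phi(z)$ extends to a meromorphic function on $\c$ whose only singularities are the simple poles at $-\zeta_n$. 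Establishing these convergence facts is the first technical ingredient, and $\coeffa_0(\rho,\zeta)\ge 0$, $\coeffb_0(\zeta,\rho)\ge 0$ follow immediately since all partial products are positive.

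Next I would compute the residue of $\phi(z)$ at each pole $z=-\zeta_n$. Writing $\phi(z)=(1+z/\rho_n)/(1+z/\zeta_n)\cdot\prod_{k\ne n}(1+z/\rho_k)/(1+z/\zeta_k)$ and using $\mathrm{Res}_{z=-\zeta_n}\,1/(1+z/\zeta_n)=\zeta_n$, one finds that the residue equals $\zeta_n(1-\zeta_n/\rho_n)\prod_{k\ne n}(1-\zeta_n/\rho_k)/(1-\zeta_n/\zeta_k)=\zeta_n\coeffa_n(\rho,\zeta)$. The sign claim $\coeffa_n(\rho,\zeta)>0$ is a parity count: by \eqref{interlacing}, the factor $1-\zeta_n/\rho_n>0$, while for each $k\ne n$ the two factors $1-\zeta_n/\rho_k$ and $1-\zeta_n/\zeta_k$ are simultaneously positive (for $k$ large) or, for the finitely many small $k$, one checks directly that $\zeta_n$ lies between consecutive interlaced points so the signs of numerator and denominator agree pairwise. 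Similarly $1/\phi(z)$ has simple poles at $z=-\rho_n$ with residues $-\rho_n\coeffb_n(\zeta,\rho)$ by the analogous computation, and the same interlacing parity argument gives $\coeffb_n(\zeta,\rho)>0$.

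To assemble \eqref{part_fraction1}, observe that $\phi(z)$ is bounded as $|z|\to\infty$ away from the poles (numerator and denominator have the same "degree" in the sense of the canonical products), and $\phi(z)\to \coeffa_0(\rho,\zeta)$ along the positive real axis by the very definition of $\coeffa_0$. Hence $\phi(z)-\coeffa_0(\rho,\zeta)-\sum_{n\ge1}\coeffa_n(\rho,\zeta)\zeta_n/(\zeta_n+z)$ is an entire function (the subtracted series cancels all poles and converges because $\sum\coeffa_n(\rho,\zeta)<\infty$, which follows from $\phi$ being bounded and the Mittag-Leffler expansion) that is bounded and vanishes at infinity, hence identically zero by Liouville. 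For \eqref{part_fraction2} the function $1/\phi(z)$ grows linearly (it behaves like a constant times $z$ for large $z$ on the real axis, since it is the reciprocal of a bounded function that does not decay — more precisely one extracts the linear term whose coefficient is $\coeffb_0(\zeta,\rho)$), so one applies Mittag-Leffler after subtracting the affine part $1+z\coeffb_0(\zeta,\rho)$ and the convergence-generating terms $-\rho_n/(\rho_n+z)$ built into the bracket, again concluding via Liouville. The main obstacle I anticipate is the careful bookkeeping of the growth order of $\phi$ and $1/\phi$ at infinity — one must argue that $\phi$ is genuinely bounded (order zero, genus zero canonical products with matched counting functions) so that only one constant is needed in \eqref{part_fraction1}, whereas $1/\phi$ picks up exactly one extra linear term; getting the right number of subtracted polynomial terms in the Mittag-Leffler expansion, and justifying the interchange of limit and sum that identifies the constants with the stated infinite products, is the delicate part.
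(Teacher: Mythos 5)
Your outline for (\ref{part_fraction1}) is correct and is essentially the standard argument that the paper outsources to Theorem 1 of \cite{Kuz-theta}: the interlacing condition makes each factor of the product lie in $(0,1)$ and the logarithms telescope against $\log(1+z/\zeta_1)$, giving convergence to a positive limit; the residue of $\phi$ at $z=-\zeta_n$ is $\zeta_n\coeffa_n(\rho,\zeta)$; the pairwise sign matching of $1-\zeta_n/\rho_k$ and $1-\zeta_n/\zeta_k$ forced by (\ref{interlacing}) gives $\coeffa_n(\rho,\zeta)>0$; and a Mittag--Leffler expansion justified by bounding $\phi$ on expanding contours assembles the identity. Where you genuinely diverge from the paper is (\ref{part_fraction2}). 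You propose a second, independent Mittag--Leffler argument for $1/\phi$, which obliges you to control a function of linear growth on contours and to extract the affine part by hand. The paper instead uses the re-indexing trick
\begin{equation*}
\frac{1}{\phi(z)}=\Bigl(1+\frac{z}{\zeta_1}\Bigr)\prod_{n\ge 1}\frac{1+\frac{z}{\zeta_{n+1}}}{1+\frac{z}{\rho_n}},
\end{equation*}
observes that the sequences $\{\rho_n\}_{n\ge1}$ and $\{\zeta_{n+1}\}_{n\ge1}$ again satisfy the interlacing hypothesis (with the roles of poles and zeros exchanged), applies the already-proved (\ref{part_fraction1}) to the bounded product on the right, and then multiplies out the single affine factor $1+z/\zeta_1$; using that the expansion of the product sums to $1$ at $z=0$, the result regroups exactly into $1+z\coeffb_0(\zeta,\rho)+\sum_n\coeffb_n(\zeta,\rho)[1-\rho_n/(\rho_n+z)]$, with $\coeffb_0$ appearing as $\zeta_1^{-1}$ times the new ``$\coeffa_0$'' and positivity of $\coeffb_n$ coming for free. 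This buys the second identity from the first with no new growth estimates, so if you are writing the details you should adopt it. One further caution: your justification that $\sum_n\coeffa_n(\rho,\zeta)<\infty$ ``follows from $\phi$ being bounded and the Mittag--Leffler expansion'' is circular as stated, since the expansion is what you are trying to prove; the clean route is to note that the $\coeffa_n$ are positive and that the partial sums of residues are bounded by the contour integrals of $\phi(w)/(w-z)$, which are controlled by the same bound on $\phi$ along the contours. With these two adjustments your proposal is a complete proof.
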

\begin{proof}
The convergence of the infinite product in (\ref{def_phi}) and the partial fraction decomposition (\ref{part_fraction1}) follow from 
Theorem 1 in \cite{Kuz-theta}. To prove the second partial fraction decomposition, rewrite the infinite product as
 \beqq
 \left(1+\frac{z}{\zeta_1} \right) \prod\limits_{n\ge 1}  \frac{1+\frac{z}{\zeta_{n+1}}}{1+\frac{z}{\rho_n}},
 \eeqq
and note that sequences $\{\zeta_{n+1}\}_{n\ge 1}$ and $\{\rho_n\}_{n\ge 1}$ satisfy interlacing condition, thus we can apply the same method. The details are left to the reader.
\end{proof}

Using Monotone Convergence Theorem one can show that formulas (\ref{part_fraction1}) and (\ref{part_fraction2}) are equivalent to 
\beq\label{see-the-atom}
 \phi(z)&=& \coeffa_0(\rho,\zeta) + \int\limits_{\r^+} \left[ \sum\limits_{n\ge 1} \coeffa_n(\rho,\zeta) \zeta_n e^{-\zeta_n x} \right] e^{-zx} \d x,\\
\frac{1}{ \phi(z)}&=& 1+z \coeffb_0(\zeta,\rho) +\int\limits_{\r^+} \left[\sum\limits_{n\ge 1} \coeffb_n(\zeta,\rho) 
 \rho_n e^{-\rho_n x} \right] \left(1-e^{-zx}  \right) \d x.
\label{see-the-drift}
\eeq

Before stating our next result we recall that a L\'evy process creeps upwards if for some (and then all) $x\geq 0$, $\mathbb{P}(X_{\tau^+_x} = x|\tau^+_x <\infty) >0$. Moreover, we say that $0$ is irregular for $(0,\infty)$ if and only if $\mathbb{P}(\tau^+_0>0)=1$. (Note that this probability can only be 0 or 1 thanks to the Blumenthal zero-one law). We refer to Bertoin \cite{Bertoin}, Doney \cite{MR2320889} or Kyprianou \cite{Kyprianou} for more extensive discussion of these subtle path properties.

We also need to introduce some more notation. In the forthcoming text everything will depend on the coefficients   $\{\coeffa_n(\rho,\zeta), \coeffa_n(\hat \rho,\hat \zeta)\}_{n\ge 0}$ 
defined using (\ref{formula_cn_a_b})
and $\{\coeffb_n(\zeta,\rho), \coeffb_n(\hat \zeta,\hat \rho)\}_{n\ge 0}$ 
defined using (\ref{formula_cn_b_a}).  We define for convenience a column vector
 \beqq
 \bar \coeffa(\rho,\zeta)=\left[  \coeffa_0(\rho,\zeta), \coeffa_1(\rho,\zeta),\coeffa_2(\rho,\zeta), ... \right]^T,
 \eeqq
and similarly for $\bar \coeffa(\hat \rho,\hat \zeta)$,  $\bar \coeffb(\zeta,\rho)$ and $\bar \coeffb(\hat \zeta,\hat \rho)$.
 Next, given a sequence of positive numbers $\zeta=\{\zeta_n\}_{n\ge 1}$,  we define a column vector $\bar \vv(\zeta,x)$ as a vector of distributions
\beqq
\bar \vv(\zeta,x)=\left[ \delta_0(x) , \zeta_1 e^{-\zeta_1 x}, \zeta_2 e^{-\zeta_2 x},\dots \right] ^T ,
\eeqq
where $\delta_0(x)$ is the Dirac delta function at $x=0$.

\bigskip

{\bf From here on, unless otherwise stated, we shall always assume that $X$ is a L\'evy process belonging to the $M$-class  and that $X$ is not a compound Poisson
process.}

\noindent
\begin{theorem}[Properties of Meromorphic processes]\label{M_class_properties} 
${}$
\\
\begin{itemize}
 \item[(i)] The Wiener-Hopf factors are given by
\beq\label{eqn_WH_factor}
 \phiqp(\i z)=\e \left[ e^{-z \xsup_{\ee(q)}} \right]=\prod\limits_{n\ge 1} \frac{1+\frac{z}{\rho_n}}{1+\frac{z}{\zeta_n}}, \;\;\; \qquad
 \phiqm(-\i z)=\e \left[ e^{z \xinf_{\ee(q)}} \right]=\prod\limits_{n\ge 1} \frac{1+\frac{z}{\hat \rho_n}}{1+\frac{z}{\hat \zeta_n}}.
\eeq
 \item[(ii)] For $x \ge 0$
\beq\label{dist_sup_X}
 \p(\xsup_{\ee(q)} \in \d x) =   \bar\coeffa(\rho,\zeta)^T \times \bar\vv(\zeta,x) \d x, \;\;\;
\p(-\xinf_{\ee(q)} \in \d x) =   \bar\coeffa(\hat \rho,\hat \zeta)^T \times \bar\vv(\hat \zeta,x) \d x.
\eeq
\item[(iii)] $\coeffa_0(\rho,\zeta)$ {\em ($\coeffa_0(\hat \rho,\hat \zeta)$)} is nonzero if and only if $0$ is irregular for $(0,\infty)$ 
  {\em (correspondingly, $(-\infty,0)$)}.

 \item[(iv)] $\coeffb_0(\zeta,\rho)$ {\em ($\coeffb_0(\hat \zeta,\hat \rho)$)} is nonzero if and only if the process $X$ creeps upwards 
 {\em (correspondingly, downwards)}.

 \item[(v)] For every $q>0$
 \beq\label{u^q_formula}
 \p(X_{\ee(q)}\in \d x)=q\left[\ind_{\{x>0\}} \sum\limits_{n\ge 1} \frac{e^{-\zeta_n x}}{\psi'(\zeta_n)} - \ind_{\{x<0\}} \sum\limits_{n\ge 1} 
 \frac{e^{\hat \zeta_n x}}{\psi'(-\hat \zeta_n)} \right] \d x.
 \eeq
\end{itemize}
\end{theorem}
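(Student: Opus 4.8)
The plan is to derive each of the five statements from the Wiener--Hopf factorization \eqref{WH_fact_1} established in Theorem \ref{thm_equivalent}(v) together with the partial fraction machinery of Lemma \ref{analytical_lemma}. First I would recall the classical Wiener--Hopf identity, namely that for an independent exponential clock $\ee(q)$ one has
\beqq
\frac{q}{q-\psi(\i z)}=\e\left[e^{\i z X_{\ee(q)}}\right]=\e\left[e^{\i z \xsup_{\ee(q)}}\right]\,\e\left[e^{\i z \xinf_{\ee(q)}}\right]=\phiqp(\i z)\,\phiqm(\i z),
\eeqq
where $\phiqp$ and $\phiqm$ are (Laplace transforms of) the laws of $\xsup_{\ee(q)}$ and $-\xinf_{\ee(q)}$. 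Substituting $\i z\mapsto -z$ in \eqref{WH_fact_1} and grouping the factors with positive roots/poles $\{\zeta_n,\rho_n\}$ separately from those with $\{\hat\zeta_n,\hat\rho_n\}$, one reads off two candidate Laplace transforms whose product matches $q/(q-\psi(z))$; that each group is genuinely the Wiener--Hopf factor (rather than a reshuffling) follows because the first factor is analytic and bounded in $\re(z)\le 0$ while the second is analytic and bounded in $\re(z)\ge 0$, so the identification is forced by uniqueness of the Wiener--Hopf factorization. This proves (i).

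For (ii), I would apply Lemma \ref{analytical_lemma} directly to $\phi(z)=\phiqp(\i z)$ in the form \eqref{part_fraction1}, which expresses $\phiqp(\i z)$ as $\coeffa_0(\rho,\zeta)+\sum_{n\ge 1}\coeffa_n(\rho,\zeta)\zeta_n/(\zeta_n+z)$; recognizing $\zeta_n/(\zeta_n+z)=\int_0^\infty \zeta_n e^{-\zeta_n x}e^{-zx}\,\d x$ and the constant as the Laplace transform of $\coeffa_0(\rho,\zeta)\delta_0(\d x)$, one obtains exactly the vector form \eqref{dist_sup_X} after recognizing the bracketed data as $\bar\coeffa(\rho,\zeta)^T\bar\vv(\zeta,x)$ (this is essentially \eqref{see-the-atom}). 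The positivity of all coefficients from Lemma \ref{analytical_lemma} confirms this is a genuine (sub)probability density plus possible atom at $0$; the same argument applied to $\phiqm$ gives the second formula. For (iii), the atom at zero in $\p(\xsup_{\ee(q)}\in\d x)$ has mass $\coeffa_0(\rho,\zeta)$, and the standard fact (e.g.\ Bertoin \cite{Bertoin} or Kyprianou \cite{Kyprianou}) is that $\p(\xsup_{\ee(q)}=0)>0$ iff $0$ is irregular for $(0,\infty)$; combined with (ii) this gives the claim. Symmetrically for $\coeffa_0(\hat\rho,\hat\zeta)$ and $(-\infty,0)$.

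For (iv), I would instead use the reciprocal partial fraction \eqref{part_fraction2}, i.e.\ $1/\phiqp(\i z)=1+z\,\coeffb_0(\zeta,\rho)+\sum_{n\ge 1}\coeffb_n(\zeta,\rho)[1-\rho_n/(\rho_n+z)]$. The presence of a genuine linear term $z\,\coeffb_0(\zeta,\rho)$ in the descending/ascending ladder-height Laplace exponent is exactly the analytic signature of a drift in the ascending ladder height process, which is equivalent to upward creeping; one invokes the standard criterion (creeping upwards iff the ascending ladder height has a nonzero drift, see \cite{Bertoin,Kyprianou}) and notes from \eqref{see-the-drift} that $\coeffb_0(\zeta,\rho)$ is precisely that drift coefficient. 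Again the downward statement is the mirror image with $(\hat\zeta,\hat\rho)$. Finally, for (v) I would combine (i) with the classical factorization $\e[e^{zX_{\ee(q)}}]=q/(q-\psi(z))=\phiqp(-\i\cdot(\i z))\phiqm$ \dots\ more directly, I would take $\e[e^{zX_{\ee(q)}}]=q/(q-\psi(z))$ and perform a partial fraction expansion of the meromorphic function $q/(q-\psi(z))$ over its poles, which by \eqref{interlacing_prop} are exactly the simple roots $\{\zeta_n,-\hat\zeta_n\}$ of $\psi(z)-q$; the residue of $q/(q-\psi(z))$ at $z=\zeta_n$ is $-q/\psi'(\zeta_n)$ and at $z=-\hat\zeta_n$ is $q/\psi'(-\hat\zeta_n)$. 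Writing $1/(\zeta_n-z)=\int_0^\infty e^{-\zeta_n x}e^{zx}\d x$ for $x>0$ and $1/(\hat\zeta_n+z)=\int_{-\infty}^0 e^{\hat\zeta_n x}e^{zx}\d x$ for $x<0$ and inverting term by term yields \eqref{u^q_formula}; one must check there is no polynomial part in the partial fraction expansion, which holds because $q/(q-\psi(z))\to 0$ as $|z|\to\infty$ along suitable rays (no Gaussian component is needed for decay since $q-\psi(z)$ grows at least linearly by condition (vi)), and that the resulting series converges, which follows from Proposition \ref{integrability}.

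The main obstacle I anticipate is the rigorous justification in (v) of the termwise partial fraction expansion and its $L^1$-convergence: one needs to control $\sum_n 1/|\psi'(\zeta_n)|$ (and the hatted analogue), which requires careful asymptotic estimates on the spacing of the roots $\zeta_n$ relative to the poles $\rho_n$ and on the growth of $\psi'$ along the real axis --- this is where the interlacing \eqref{interlacing_prop} and the integrability constraints of Proposition \ref{integrability} genuinely enter, rather than merely being invoked formally. The other points (i)--(iv) are, by contrast, essentially bookkeeping on top of Lemma \ref{analytical_lemma} and the standard path-property dictionary for L\'evy processes, so I would present them compactly and devote most of the proof's technical weight to (v).
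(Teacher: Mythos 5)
Your treatment of parts (i)--(iv) is essentially the paper's: (i) and (ii) are read off from the factorization (\ref{WH_fact_1}) together with Lemma \ref{analytical_lemma} (i.e.\ the representations (\ref{see-the-atom}) and (\ref{see-the-drift})), (iii) is the atom-at-zero criterion for irregularity applied to (ii), and (iv) identifies $\coeffb_0(\zeta,\rho)$ with the drift of the ascending ladder height via $\phiqp(\i z)=\kappa(q,0)/\kappa(q,z)$ and the standard creeping criterion. Nothing to add there.

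For (v) you take a genuinely different, and harder, route. You propose a direct Mittag--Leffler expansion of the meromorphic function $q/(q-\psi(z))$ over its poles $\{\zeta_n,-\hat\zeta_n\}$, and you correctly flag the two analytic issues this creates: ruling out an entire (or polynomial/constant) part, and proving that the resulting series of principal parts converges and can be Laplace-inverted term by term. Your justification for the first issue (``$q-\psi(z)$ grows at least linearly by condition (vi)'') is not right as stated --- condition (vi) gives no lower bound on the growth of $\psi$, and for bounded-variation driftless $M$-processes the characteristic exponent can grow arbitrarily slowly --- and the second issue is left entirely open, acknowledged as the main obstacle. The paper avoids both problems: part (iv) of Theorem \ref{thm_equivalent} already guarantees that $\p(X_{\ee(q)}\in\d x)$ has the a priori form (\ref{u^q}), i.e.\ an atom at $0$ plus an absolutely convergent mixture of exponentials on each half-line, with $\alpha_0=0$ precisely because $X$ is not a compound Poisson process (a hypothesis your argument never invokes, but which is what kills the atom). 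Taking the bilateral Laplace transform of (\ref{u^q}) and equating it with $q/(q-\psi(z))$, one then only has to match pole locations (forcing $\beta_n=\zeta_n$, $\hat\beta_n=\hat\zeta_n$) and residues (giving $\alpha_n/\beta_n=q/\psi'(\zeta_n)$ and its hatted analogue); convergence and the absence of an entire part come for free from the probabilistic representation. I would recommend replacing your direct expansion by this argument: the step you identify as the technical heart of the proof simply does not arise.
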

\begin{proof}
Parts (i) and (ii) were established in Theorem 1 in \cite{Kuz-theta}, these results also follow easily from formulas (\ref{WH_fact_1}), (\ref{see-the-atom}) and the structure of the Wiener-Hopf factorization. Let us prove (iii).  
We note that $0$ is irregular for $(0,\infty)$ if and only if, for any $q>0$, $\overline{X}_{\ee(q)}$ has an atom in its distribution at $0$. From part (ii) this is clearly the case if and only if $\coeffa_0(\rho,\zeta)$ is non-zero.
For part (iv) of the proof, we note that we may necessarily write 
\[
 \phi^+_q(\i z) = \frac{\kappa(q,0)}{\kappa(q,z)} ,
\]
where $\kappa(q,z)$ is the Laplace exponent of the bivariate subordinator which describes the ascending ladder process of $X$. It is also known that a L\'evy process creeps upwards if and only if the subordinator describing its ladder height process has a linear drift component. The drift coefficient is then described by $\lim_{z\uparrow\infty} \kappa(q,z)/z\in[0,\infty)$ where the limit is independent of the value of  $q\geq 0$. Inspecting (\ref{see-the-drift}) we see that the required drift coefficient is non-zero if and only  if $\coeffb_0(\zeta,\rho)$ is non-zero. The conclusion thus follows.

Finally, let us prove (v). From part (iv) of Theorem \ref{thm_equivalent} we know that $\p(X_{\ee(q)}\in \d x)$ can be written in the form 
(\ref{u^q}), where $\alpha_0=0$ due to our assumption that $X$ is not a compound Poisson process. Combining this fact and the identity $\e[\exp(z X_{\ee(q)})]=q/(q-\psi(z))$ we conclude that
\beq\label{u^q_find_residues}
 \frac{q}{q-\psi(z)}=\sum\limits_{n\ge 1}\frac{\alpha_n }{\beta_n(\beta_n- z)}+\sum\limits_{n\ge 1} \frac{\hat \alpha_n}{\hat \beta_n(\hat \beta_n+z)}, \;\;\; z\in \c. 
\eeq
We see that the function on the right-hand side of the above equation has poles at the points $\beta_n$ and $-\hat \beta_n$, while the left hand 
side has poles at $\zeta_n$ and $-\hat \zeta_n$ (this follows from (\ref{WH_fact_1})). Therefore we conclude that $\beta_n=\zeta_n$ and $\hat \beta_n=\hat \zeta_n$. Comparing the residues of both sides of (\ref{u^q_find_residues}) at the pole $z=\zeta_n$ we see that
\beqq
\frac{\alpha_n}{\beta_n}=-{\textrm{Res}}\left( \frac{q}{q-\psi(z)} : \; z=\zeta_n \right)=\frac{q}{\psi'(\zeta_n)}.
\eeqq
Similarly we find 
\beqq
\frac{\hat\alpha_n}{\hat\beta_n}={\textrm{Res}}\left( \frac{q}{q-\psi(z)} : \; z=-\hat \zeta_n \right)=-\frac{q}{\psi'(-\hat\zeta_n)}.
\eeqq
Combining the above two identities and (\ref{u^q}) we obtain (\ref{u^q_formula}).
\end{proof}

We would like to emphasize the importance of statement (v) of Theorem \ref{M_class_properties}. It is well known that there exist only a few specific 
examples of L\'evy processes (such as Variance Gamma or Normal Inverse Gaussian processes) for which the law of $X_t$ is known explicitly (for every $t>0$). 
 While we do not know the law of $X_t$ for Meromorphic processes, the result in Theorem \ref{M_class_properties} (v) shows that Meromorphic processes 
have an advantage that at least the distribution of $X_{\ee(q)}$ can be easily computed (for every $q>0$). 
The formula is given in terms of the roots $\zeta_n$ and $\hat \zeta_n$, but as we will see in Section  \ref{sec_numerics}, computing these numbers 
is a rather simple task and it can be done very efficiently.

As the next corollary shows, the L\'evy measure of a Meromorphic process $X$ can be easily reconstructed from zeros and poles of $q-\psi(z)$. 
This has the spirit of Vigon's theory of philanthropy (cf. \cite{vigt}) in that we construct the L\'evy measure from the Wiener-Hopf factors. 

\begin{corollary}\label{corollary_Levy_measure} 
Assume that $q>0$ and the Wiener-Hopf factors are given by (\ref{eqn_WH_factor}).  Then $X$ is Meromorphic and it's L\'evy measure is given by 
(\ref{def_pi}), where $\{\rho_n\}_{n\ge 1}$ and $\{\hat \rho_n\}_{n\ge 1}$ are poles of the Wiener-Hopf factors $\phiqp(-\i z)$ and $\phiqm(\i z)$ and coefficients
$a_n$ and $\hat a_n$ are given by  
\beq\label{formula_an_hat_an}
a_n= \coeffb_n(\zeta, \rho) \frac{q}{\phiqm(-\i \rho_n)} , \;\;\; 
\hat a_n= \coeffb_n(\hat \zeta,\hat \rho)  \frac{q}{\phiqp(\i \hat \rho_n)}.
\eeq
\end{corollary}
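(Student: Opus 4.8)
The plan is to run the characterisation in Theorem~\ref{thm_equivalent} backwards: from the prescribed Wiener--Hopf factors one first recovers $\psi$ and deduces that $X$ is Meromorphic, and then the coefficients $a_n$ and $\hat a_n$ are read off from a residue computation.

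First, the Wiener--Hopf decomposition $X_{\ee(q)}\stackrel{d}{=}\xsup_{\ee(q)}+Y$, with $Y$ an independent copy of $\xinf_{\ee(q)}$, together with $\e[e^{zX_{\ee(q)}}]=q/(q-\psi(z))$, gives
\beqq
\frac{q}{q-\psi(z)}=\phiqp(-\i z)\,\phiqm(-\i z),
\eeqq
so that inserting (\ref{eqn_WH_factor}) shows $q-\psi(z)$ to be precisely the infinite product in (\ref{WH_fact_1}). Since $\phiqp$ and $\phiqm$ are bona fide Wiener--Hopf factors, the maps $z\mapsto\phiqp(\i z)$ and $z\mapsto\phiqm(-\i z)$ are Laplace transforms of probability measures, hence completely monotone on $(0,\infty)$; this forces the zeros $\{\zeta_n,-\hat\zeta_n\}$ and poles $\{\rho_n,-\hat\rho_n\}$ of $\psi(z)-q$ to satisfy the interlacing (\ref{interlacing_prop}) (cf. the treatment of such products in \cite{Kuz-theta}). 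Hence condition (v) of Theorem~\ref{thm_equivalent} holds, $X$ is Meromorphic, and by the implication (i)$\Rightarrow$(ii) its Laplace exponent admits the partial fraction representation (\ref{eq_psi}) with a L\'evy density of the form (\ref{def_pi}). Comparing the poles of the two expressions for $q-\psi(z)$ coming from (\ref{eq_psi}) and (\ref{WH_fact_1}) --- and using that strict positivity of the $a_n,\hat a_n$ excludes any cancellation --- identifies the sequences $\{\rho_n\}$ and $\{\hat\rho_n\}$ of (\ref{def_pi}) with the parameters appearing in $\phiqp$ and $\phiqm$.

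It remains to pin down the coefficients. Reading off the residue at the simple pole $z=\rho_n$ directly from (\ref{eq_psi}) gives $\textrm{Res}(\psi:z=\rho_n)=-a_n\rho_n$, hence $a_n=\rho_n^{-1}\,\textrm{Res}(q-\psi:z=\rho_n)$. On the other hand, writing
\beqq
q-\psi(z)=\frac{q}{\phiqp(-\i z)}\cdot\frac{1}{\phiqm(-\i z)}
\eeqq
and applying the partial fraction decomposition (\ref{part_fraction2}) to $1/\phiqp$, then replacing $z$ by $-z$, we obtain $1/\phiqp(-\i z)=1-z\,\coeffb_0(\zeta,\rho)+\sum_{n\ge1}\coeffb_n(\zeta,\rho)\,z/(z-\rho_n)$, a meromorphic function on $\c$ whose residue at $z=\rho_n$ equals $\rho_n\,\coeffb_n(\zeta,\rho)$. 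Since $1/\phiqm(-\i z)=\prod_{n\ge1}(1+z/\hat\zeta_n)/(1+z/\hat\rho_n)$ is analytic at $z=\rho_n>0$ (its poles being the negative points $-\hat\rho_n$) and equals $1/\phiqm(-\i\rho_n)$ there, multiplying residues yields $\textrm{Res}(q-\psi:z=\rho_n)=q\,\rho_n\,\coeffb_n(\zeta,\rho)/\phiqm(-\i\rho_n)$, and therefore $a_n=\coeffb_n(\zeta,\rho)\,q/\phiqm(-\i\rho_n)$, as claimed. The formula for $\hat a_n$ is obtained identically from the residue at $z=-\hat\rho_n$, with the roles of the two Wiener--Hopf factors interchanged. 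As a consistency check, positivity of $a_n$ --- required by Definition~\ref{maindef} --- is automatic, since $\coeffb_n(\zeta,\rho)>0$ by Lemma~\ref{analytical_lemma} and $\phiqm(-\i\rho_n)>0$.

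I expect the only genuine difficulty to be the bookkeeping around the infinite products: one must know that the partial fraction expansions of Lemma~\ref{analytical_lemma} converge and hold as meromorphic identities on all of $\c$ (so that evaluating residues at the $\rho_n$ is legitimate), and one must exclude spurious cancellation when matching the two representations of $q-\psi(z)$. Both points are handled by the interlacing (\ref{interlacing_prop}) and by the strict positivity statements already established in Lemma~\ref{analytical_lemma}, so that the argument comes down to a careful comparison of residues rather than anything deeper.
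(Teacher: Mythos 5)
Your proposal is correct and follows essentially the same route as the paper: conclude meromorphicity from Theorem \ref{thm_equivalent} (the paper enters via part (iii), you via part (v) — an immaterial difference), match the poles of the two representations of $q-\psi(z)$, and then compute $a_n\rho_n=\textrm{Res}(q-\psi:z=\rho_n)$ by factoring out $q/\phiqm(-\i\rho_n)$ and reading off $\textrm{Res}(\phiqp(-\i z)^{-1}:z=\rho_n)=\rho_n\coeffb_n(\zeta,\rho)$ from the partial fraction expansion (\ref{part_fraction2}). The residue bookkeeping, including the signs, checks out.
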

\begin{proof}
The fact that $X$ is Meromorphic was already established in part (iii) of Theorem \ref{thm_equivalent}. 
Assume that the density of the L\'evy meausre $\pi(x)$ is given by (\ref{def_pi}), then as we have established in the proof of Theorem  \ref{thm_equivalent}, the 
Laplace exponent $\psi(z)$ can be expressed in the form (\ref{eq_psi}). Comparing this equation
with (\ref{WH_fact_1}) we conclude that the exponents $\rho_n$ and $\hat \rho_n$ in (\ref{def_pi}) must coincide with the poles 
of $\psi(z)$. 

From (\ref{eq_psi}) it also follows that $a_n \rho_n=- {\textrm {Res}} (\psi(z) : z=\rho_n)$. Using the Wiener-Hopf factorization
\beq\label{WH2}
 q-\psi(z)=\frac{q}{\phiqp(-\i z)\phiqm(-\i z)},
\eeq
and the fact that $\phiqm(-\i z)$ is non-zero at $\rho_n$ we see that
\beqq
a_n \rho_n = {\textrm {Res}} (\phiqp(-\i z)^{-1} : z=\rho_n) \times \frac{q}{\phiqm(-\i \rho_n)}. 
\eeqq
In order to finish the proof we only need to check that ${\textrm {Res}} (\phiqp(-\i z)^{-1} : z=\rho_n)=\rho_n \coeffb_n(\zeta, \rho) $ which
follows from (\ref{part_fraction2}). 
\end{proof}

 Note that the coefficients $a_n$ and $\hat a_n$ given in (\ref{formula_an_hat_an}) 
 (which define the L\'evy measure via (\ref{def_pi}))  depend on $q$ on the right hand side but not on the left hand side. It is therefore tempting to take limits as $q\downarrow 0$ on the right hand side. This is not as straightforward as it seems as the limits of $a_n$ and $\hat{a}_n$ are not easy to compute. It would be more straightforward to start with the Wiener-Hopf factorization for the case that $q = 0$ and then perform the same 
analysis as in Corollary \ref{corollary_Levy_measure}. The next corollary provides us with the aforementioned Wiener-Hopf factorization. We recall
that $\zeta_n=\zeta_n(q)$ (resp. $\hat \zeta_n=\hat \zeta_n(q)$)  denote the non-negative solutions to equation $\psi(z)=q$  (resp. $\psi(-z)=q$).

%{\color{red}
%btw, Corollary \ref{corollary_Levy_measure} gives us partial result about the question of space-time Vigon's theory in the case of M-class Levy processes. 
%assume we fix poles $\rho_n$ and $\hat \rho_n$ and then choose arbitrary  $\zeta_n(q)$ an $\hat \zeta_n(q)$ as functions of $q$ satisfying interlacing conditions and all that
%other shit (thanks, Andreas...). Then we can make two Wiener-Hopf factors via  (\ref{eqn_WH_factor}). Now, Corollary \ref{corollary_Levy_measure} tells us that this Wiener-Hopf factors are compatible with each other (they come from
%the same process $X_t$) if and only if coefficients $a_n$ and $\hat a_n$ defined by (\ref{formula_an_hat_an}) do not depend on $q$ (plus two more conditions on drift/diffusion 
%coefficient). I'm not sure though whether we want to put this discussion in the current paper...
%}

\begin{corollary}\label{corollay_q=0} 
${}$
\\
\begin{itemize}
\item[(i)]
Assume that $\e [X_1]>0$. As $q \to 0^+$ we have $q/\zeta_1(q) \to \e [X_1]$ and for $n\ge 1$ 
\beqq
\zeta_{n+1}(q) \to \zeta_{n+1}(0)\ne 0, \;\;\;  \hat\zeta_n(q) \to \hat \zeta_{n}(0)\ne 0.
\eeqq
We have the Wiener-Hopf factorization
$-\psi(z)=\kappa(0,-z)\hat \kappa(0,z)$, where
 \beq\label{eq_kappa_0}
 \kappa(0,z)&=&\lim\limits_{q\to 0^+} \frac{q}{\phiqp(\i z)} = z \e [X_1]  \prod\limits_{n\ge 1} \frac{1+\frac{z}{\zeta_{n+1}(0)}}{1+\frac{z}{\rho_n}}, \\ \nonumber
  \hat \kappa(0,z)&=&\lim\limits_{q\to 0^+} \frac{1}{\phiqm(-\i z)}=\frac{1}{\phi_0^{\minus}(-\i z)}=\prod\limits_{n\ge 1} \frac{1+\frac{z}{\hat \zeta_{n}(0)}} {1+\frac{z}{\hat \rho_n}}.
 \eeq 
 \item[(ii)]
Assume that $\e [X_1]=0$. As $q \to 0^+$ we have $\sqrt{q}/\zeta_1(q) \to \sqrt{{\textnormal{Var}}(X_1)/2}$, 
 $\sqrt{q}/\hat \zeta_1(q) \to \sqrt{{\textnormal{Var}}(X_1)/2}$ and for $n\ge 2$ 
\beqq
\zeta_{n}(q) \to \zeta_{n}(0)\ne 0, \;\;\; \hat\zeta_n(q) \to \hat \zeta_{n}(0)\ne 0.
\eeqq
We have the Wiener-Hopf factorization
$-\psi(z)=\kappa(0,-z)\hat \kappa(0,z)$, where
 \beq\label{eq_kappa_0_pt2}
 \kappa(0,z)&=&\lim\limits_{q\to 0^+} \frac{\sqrt{q}}{\phiqp(\i z)} =z \sqrt{\frac{{\textnormal{Var}}(X_1)}{2}}  \prod\limits_{n\ge 1} \frac{1+\frac{z}{\zeta_{n+1}(0)}}{1+\frac{z}{\rho_n}}, \\ \nonumber
  \hat \kappa(0,z)&=&\lim\limits_{q\to 0^+} \frac{\sqrt{q}}{\phiqm(-\i z)}=
  z \sqrt{\frac{{\textnormal{Var}}(X_1)}{2}}  \prod\limits_{n\ge 1} \frac{1+\frac{z}{\hat \zeta_{n+1}(0)}} {1+\frac{z}{\hat \rho_n}}.
 \eeq 
\end{itemize}
\end{corollary}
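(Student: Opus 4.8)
The plan is to obtain the $q=0$ Wiener-Hopf factorization by passing to the limit $q\downarrow 0$ in the identity $q-\psi(z)=q/(\phiqp(-\i z)\phiqm(-\i z))$ from~(\ref{WH2}), using the explicit product representations~(\ref{eqn_WH_factor}) together with the interlacing~(\ref{interlacing_prop}). First I would analyze the behaviour of the roots $\zeta_n(q)$ and $\hat\zeta_n(q)$ as $q\downarrow 0$. Since $\psi(z)-q$ has only real roots interlacing the poles, and $\psi$ is analytic near $0$ with $\psi(0)=0$, one may track each root by the implicit function theorem along the real axis. When $\e[X_1]=\psi'(0)>0$, the graph of $\psi$ is increasing through the origin, so the smallest positive root $\zeta_1(q)\to 0$ while, by strict monotonicity on each interval between consecutive poles, $\zeta_{n+1}(q)\to\zeta_{n+1}(0)$ and $\hat\zeta_n(q)\to\hat\zeta_n(0)$, all nonzero (these are the roots of $\psi(z)=0$ itself, which has a simple zero at the origin and one root strictly inside each pole-gap). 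The asymptotics $q/\zeta_1(q)\to\psi'(0)=\e[X_1]$ follows from $q=\psi(\zeta_1(q))=\psi'(0)\zeta_1(q)+o(\zeta_1(q))$. In the driftless case $\e[X_1]=0$, one has $\psi(z)=\tfrac12\psi''(0)z^2+o(z^2)$ with $\psi''(0)=\mathrm{Var}(X_1)$, so now the origin is a double zero of $\psi$; it splits into one small positive root $\zeta_1(q)$ and one small negative root $-\hat\zeta_1(q)$, both of order $\sqrt q$, giving $\sqrt q/\zeta_1(q)\to\sqrt{\mathrm{Var}(X_1)/2}$ and likewise for $\hat\zeta_1(q)$, while all other roots again converge to the nonzero roots of $\psi$.

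Next I would identify the limits of the renormalized factors. In case (i), write
\beqq
\frac{q}{\phiqp(\i z)}=q\prod_{n\ge1}\frac{1+\frac{z}{\zeta_n(q)}}{1+\frac{z}{\rho_n}}
=\frac{q}{\zeta_1(q)}\,(\zeta_1(q)+z)\prod_{n\ge2}\frac{1+\frac{z}{\zeta_n(q)}}{1+\frac{z}{\rho_{n-1}}}\cdot\prod_{n\ge1}\Big(\text{tail correction}\Big),
\eeqq
and observe that, re-indexing so the factor $(1+z/\zeta_{n+1}(q))/(1+z/\rho_n)$ appears, the prefactor $q/\zeta_1(q)\to\e[X_1]$ and each remaining factor converges to $(1+z/\zeta_{n+1}(0))/(1+z/\rho_n)$; uniform convergence of the product on compacts is guaranteed by the interlacing bounds, which force $|1-\zeta_{n+1}(q)/\rho_n|$ to be summable uniformly in $q$ near $0$ (the pairs $(\zeta_{n+1},\rho_n)$ stay pinched between consecutive poles, and convergence of $\sum a_n\rho_n^{-2}$ from Proposition~\ref{integrability} controls the tail). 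This yields the stated formula for $\kappa(0,z)$. Since $\phiqm(-\i z)\to\phi_0^{\minus}(-\i z)=\prod_{n\ge1}(1+z/\hat\zeta_n(0))/(1+z/\hat\rho_n)$ directly (no root tends to zero on this side), setting $\hat\kappa(0,z)=1/\phi_0^{\minus}(-\i z)$ and multiplying the two limits gives $-\psi(z)=\lim_{q\downarrow0}(q-\psi(z))=\kappa(0,-z)\hat\kappa(0,z)$. Case (ii) is the symmetric variant: one extracts a factor $\sqrt q/\zeta_1(q)$ from $\phiqp$ and a factor $\sqrt q/\hat\zeta_1(q)$ from $\phiqm$, each tending to $\sqrt{\mathrm{Var}(X_1)/2}$, and the remaining products converge as before after re-indexing, which produces the two $z$-linear factors in~(\ref{eq_kappa_0_pt2}).

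The main obstacle I anticipate is justifying the interchange of limit and infinite product uniformly on compact subsets of $\{\re(z)\ge 0\}$ (or of $z>0$, which by analyticity suffices), i.e.\ controlling the tail of the product uniformly for $q$ in a neighbourhood of $0$. The clean way is to note that $\log\big((1+z/\zeta_{n+1}(q))/(1+z/\rho_n)\big)$ is $O\big(|z|\,|\rho_n^{-1}-\zeta_{n+1}(q)^{-1}|\big)=O\big(|z|(\rho_n-\zeta_{n+1}(q))\rho_n^{-2}\big)$ for large $n$, and that the spacing $\rho_n-\zeta_{n+1}(q)$ is dominated, uniformly in $q\in[0,q_0]$, by a quantity summable against $\rho_n^{-2}$ — this is exactly the content of the integrability condition underlying~(\ref{eqn_WH_factor}) and is essentially already contained in the proof of Theorem~1 in~\cite{Kuz-theta} from which~(\ref{eqn_WH_factor}) is quoted. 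A secondary point to handle carefully is that the renormalizing constant must be read off correctly: one checks $\lim_{z\uparrow\infty}\kappa(0,z)/z$ against the known ladder-height drift, or equivalently evaluates the derivative at $0$, to confirm the coefficient is $\e[X_1]$ (resp. $\sqrt{\mathrm{Var}(X_1)/2}$) and not merely proportional to it; this is forced by the normalization $\psi'(0)=\e[X_1]$ (resp.\ $\psi''(0)=\mathrm{Var}(X_1)$) together with the first-order expansion of $q-\psi(z)$ near the origin.
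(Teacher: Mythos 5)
Your proposal is correct and follows essentially the same route as the paper: track the roots near the origin via the analyticity of $\psi$ and the expansions $\psi(z)=\e[X_1]z+o(z)$ (resp. $\psi(z)=\tfrac12\mathrm{Var}(X_1)z^2+o(z^2)$), then pass to the limit in the product representations \eqref{eqn_WH_factor}. The only cosmetic difference is that the paper deduces $\zeta_1(q)\to 0$ (while $\hat\zeta_1(q)\not\to 0$) from the probabilistic fact that $\phiqp(\i z)\to 0$ and $\phiqm(-\i z)\to\e[e^{z\xinf_\infty}]$ when $\e[X_1]>0$, whereas you read it off the graph of $\psi$ near the origin; your extra care about uniform tail control of the infinite products is a legitimate point the paper leaves implicit.
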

\begin{proof}
Let us prove part (i). Since $\e [X_1]>0$ we know that $\xsup_{\ee(q)} \to +\infty$ and $\xinf_{\ee(q)} \to \xinf_{\infty}$ as $q\to 0^+$, and also that
$\xinf_{\infty}<\infty$  with  probability one. Therefore, if $z>0$  the Wiener-Hopf factor $\phiqp(\i z)=\e [\exp(-z \xsup_{\ee(q)})]$ must converge to $0$ as $q\to 0^+$
while $\phiqm(-\i z)=\e [\exp(z \xinf_{\ee(q)})]$ must converge to $\e [\exp(z \xinf_{\infty})]$ as $q\to 0^+$. Since all roots 
$\zeta_n(q)$ and $\hat \zeta_n(q)$ have nonzero limits as $q\to 0^+$ if $n\ge 2$ (this is true due to the interlacing condition (\ref{interlacing_prop}) and the fact that $\psi(z)$ is a meromorphic
function) we conclude that $\zeta_1(q)$ must go to zero while $\hat \zeta_1(q) \to \hat\zeta_1(0) \ne 0$. The function $\psi(z)$ is analytic in the neighbourhood 
or $z=0$, and it's derivative $\psi'(0)=\e [X_1] \ne 0$, thus using the Implicit Function Theorem 
we conclude that $\zeta_1(q)$ is an analytic function of $q$ in some neighbourhood  of $q=0$, and taking derivative with respect to $q$ of the equation $\psi(z)=q$ we find
that
 \beqq
 \frac{\d}{\d q} \zeta_1(q) = \frac{1 }{\psi'(\zeta_1(q))}.
 \eeqq
Since $\psi'(0)= \e [X_1]$ we conclude that $q/\zeta_1(q) \to \e [X_1]$ as $q\to 0^+$. Using this fact and the  formulae in (\ref{eqn_WH_factor}) for the Wiener-Hopf factors we obtain expressions  (\ref{eq_kappa_0}). 

The proof of part (ii) is very similar. We use the fact that $\psi'(0)=\e [X_1]=0$ and $\psi''(0)={\textnormal{Var}}(X_1)$ to conclude that
$\psi(z)=\frac12 {\textnormal{Var}}(X_1) z^2 + O(z^3)$ as $z\to 0$. This implies that when $q$ is small and positive, the equation $\psi(z)=q$ has two solutions in the neighbourhood of zero,  $\zeta_1=\sqrt{2q/{\textnormal{Var}}(X_1)}+o(\sqrt{q})$ and 
$-\hat\zeta_1=-\sqrt{2q/{\textnormal{Var}}(X_1)}+o(\sqrt{q})$. Using this information, the uniquenes of the Wiener-Hopf factorization and taking the limit as $q\to 0^+$ in (\ref{WH_fact_1})  we obtain (\ref{eq_kappa_0_pt2}).
\end{proof}

%\begin{remark}
%Note that Corollary \ref{corollary_Levy_measure} requires that $q>0$ and all the coefficients that appear in the expression for the L\'evy density are dependent on this value. Aside from the curiosity that the L\'evy density does not, as a whole, depend on $q$, it would seem more natural to state the result in Corollary \ref{corollary_Levy_measure} for the case $q=0$. Such a result can be obtained by starting with the Wiener-Hopf factorisation for $q=0$ presented in Corollary
%\ref{corollay_q=0} and computing the residues using the same technique as in the proof of  Corollary \ref{corollary_Levy_measure}.
%\end{remark}

\section{Examples of Meromorphic processes}\label{examples}

There are four particular families of L\'evy processes which have featured in recent literature, all of which have proved to have relevance to a number of applied probability models, all of which have exhibited some degree of mathematical tractability in the context that they have occurred. These processes are as follows.

\begin{itemize}
\item {\bf Hyper-exponential L\'evy processes:} The elementary but classical Kou model, \cite{Kou}, consists of a linear Brownian motion plus a compound Poisson process with two-sided exponentially distributed jumps. The natural generalization of this model (which therefore includes the Kou model) is the hyper-exponential L\'evy process for which the exponentially distributed jumps are replaced by hyper-exponentially distributed jumps. That is to say, the density of the 
L\'evy measure is written
\[
\pi(x) =
  \ind_{\{x>0\}}\sum_{i=1}^{N} a_i \rho_i e^{-\rho_i x} 
+
  \ind_{\{x<0\}}\sum_{i=1}^{\hat N} \hat a_i \hat \rho_i e^{\hat \rho_i x},
\]
where $a_i$, $\hat a_i$, $\rho_i$ and $\hat \rho_i$ are positive numbers and $N$, $\hat N$ are positive integers. One can verify that the Laplace exponent is a rational function
of the form (\ref{eq_psi}), where we have finite sums instead of infinite series, and that hyper-exponential L\'evy processes have finite activity jumps and paths of bounded variation unless $\sigma^2>0$.

Strictly speaking, the Hyper-exponential L\'evy processes do not belong to the class of Meromorphic processes, as in this case 
the Laplace exponent $\psi(z)$ has only finite number of poles $\rho_n$, $-\hat \rho_n$. However, all the results presented in this paper are 
still correct when we have a process with positive and/or negative hyper-exponential jumps, provided that the results are interpreted in the correct way, see Remark \ref{hyp_remark} below.

\item {\bf $\beta$-class L\'evy processes:} The $\beta$-class of L\'evy processes was introduced by Kuznetsov \cite{Kuz-beta}. The Laplace exponent is given by
\begin{eqnarray*}
\psi(z) &=&  \frac{1}{2}\sigma^2 z^2 + az+ \frac{c_1}{\beta_1} \left\{ {\textnormal{B}}( \alpha_1 - \frac{z}{\beta_1}, 1- \lambda_1)-
{\textnormal{B}}(\alpha_1, 1-\lambda_1)\right\}\\
&& + \frac{c_2}{\beta_2}\left\{{\textnormal{B}}( \alpha_2 + \frac{ z}{\beta_2}, 1- \lambda_2)-{\textnormal{B}}(\alpha_2, 1-\lambda_2) \right\},
\end{eqnarray*}
where ${\textnormal{B}}(x,y)=\Gamma(x)\Gamma(y)/\Gamma(x+y)$ is the Beta function, with parameter range $a\in\mathbb{R}$, $\sigma\ge0$, $c_i, \alpha_i, \beta_i >0$ and $\lambda_i\in(0,3)\setminus\{1,2\}$. The corresponding  L\'evy measure $\Pi(\d x)$  has density
\[
\pi(x) = \ind_{\{x>0\}} c_1\frac{e^{-\alpha_1 \beta_1 x}}{(1- e^{-\beta_1 x})^{\lambda_1}}
+ \ind_{\{x<0\}}c_2 \frac{e^{\alpha_2\beta_2 x}}{(1- e^{\beta_2 x})^{\lambda_2}}.
\]
With the help of binomial series one can check that $\pi(x)$ is of the form (\ref{def_pi}), thus $\beta$-processes are Meromorphic. 
 
The large number of parameters allows one to choose L\'evy processes within the $\beta$-class that have paths that are both of unbounded variation (when at least one of the conditions $\sigma\neq 0$, $\lambda_1\in(2,3)$ or $\lambda_2\in(2,3)$ holds) and bounded variation (when all of the conditions $\sigma=0$, $\lambda_1\in(0,2)$ and $\lambda_2\in(0,2)$ hold) as well as having infinite and finite activity in the jump component (accordingly as both $\lambda_1,\lambda_2\in(1,3)$ or $\lambda_1,\lambda_2\notin(1,3)$). The $\beta$-class of L\'evy processes includes another recently introduced family of L\'evy processes known as Lamperti-stable processes, cf. \cite{CaCha, Caballero2008, Chaumont2009}.

\item {\bf $\theta$-class L\'evy processes:}

 The $\theta$-class of L\'evy processes was also introduced by Kuznetsov \cite{Kuz-theta} as a family of processes
with Gaussian component and two-sided jumps, characterized by the density of the L\'evy measure 
\beqq
\pi(x)=\ind_{\{x>0\}} c_1\beta_1 e^{-\alpha_1 x}\Theta_{k}(x\beta_1) + \ind_{\{x<0\}} c_2 \beta_2 e^{\alpha_2 x}\Theta_{k}(-x\beta_2),
\eeqq
where $\Theta_{k}(x)$ is the $k$-th order (fractional) derivative of the theta function $\theta_3(0,e^{-x})$:
\beqq
\Theta_{k}(x)=\frac{\d ^k}{\d x^k} \theta_3(0,e^{-x}) = \ind_{\{k=0\}}+2\sum\limits_{n\ge 1} n^{2k} e^{-n^2 x}, \;\;\; x>0.
\eeqq
The parameter $\chi=k+1/2$ corresponds to the exponent of the singularity of $\pi(x)$ at $x=0$, and when $\chi \in \{1/2,3/2,5/2\}$ ($\chi \in \{1,2\}$) the Laplace exponent of $X$ is given in terms of trigonometric (digamma) functions. Again, due to the fact that the density of 
the L\'evy measure is of the form (\ref{def_pi}) we conclude that $\theta$-processes are Meromorphic.

\item {\bf Hypergeometric L\'evy processes:} These processes were introduced in Kyprianou et al. \cite{KyPaRi, KyPavS} as an example of how to use Vigon's theory of philanthropy. Their Laplace exponent is given by 
\beq\label{def_hypergeometric}
\psi(z) = \frac{1}{2}\sigma^2 z^2 +az+  \Phi_1(-z)\Phi_2(z), 
\eeq
where $a,\sigma \in\mathbb{R}$ and $\Phi_1,\Phi_2$ are the Laplace exponents of two subordinators from the $\beta$-class of L\'evy processes. Note that such Laplace exponents necessarily take the form 
\begin{equation}
\Phi(\theta)
 =\mathtt{k}+\delta\theta +\frac{c}{\beta} \big\{ 
 {\textnormal{B}}\left(1-\alpha+\gamma, - \gamma \right) - {\textnormal{B}}\left( 1-\alpha+\gamma+\theta/\beta , - \gamma \right)\big\}.\label{kuz}
\end{equation}

Compared with the previous examples, it is less obvious that hypergeometric processes are Meromorphic, 
however it is not hard to establish this result with the help of Theorem \ref{thm_equivalent}. We know that $\beta$-processes are meromorphic, thus the process $\tilde X$ with the Laplace exponent 
$\tilde \psi(z)= \Phi_1(-z)\Phi_2(z)$ is Meromorphic (this is due to part (v) of Theorem \ref{thm_equivalent}). Finally, the hypergeometric process defined by the Laplace exponent via (\ref{def_hypergeometric}) and the process $\tilde X$ have the same L\'evy measure, therefore $X$ is also Meromorphic.
\end{itemize}

\begin{remark}\label{hyp_remark} Within the scope of Definition \ref{maindef},
when the process $X_t$ has hyper-exponential positive jumps (a mixture of $N$ exponentials), 
 we have only a finite sequence $\{\rho_n\}_{n=1,2,..,N}$ and either $N$ (or $N+1$) negative roots $\zeta_N$. 
All the formulas presented in this paper are still valid in this case if we adopt notation
 $\rho_k = \infty$ for $k > N$ and $\zeta_k=\infty $ for $k>N$ (or $k>N+1$). For example, expression (\ref{eqn_WH_factor}) for the Wiener-Hopf factor becomes
a finite product
 \beqq
\phiqp(\i z)=\prod\limits_{n=1}^N \frac{1+\frac{z}{\rho_n}}{1+\frac{z}{\zeta_n}}, \;\;\; 
\textrm{ or } \;\;\; \phiqp(\i z)= \left[ \prod\limits_{n=1}^N \frac{1+\frac{z}{\rho_n}}{1+\frac{z}{\zeta_n}} \right] \frac{1}{1+\frac{z}{\zeta_{N+1}}},
 \eeqq 
depending on whether we have $N$ or $N+1$ negative roots $\zeta_n$. The same remark holds true when we have hyper-exponential negative jumps.
\end{remark}

\begin{remark}
When the process $X$ has hyper-exponential positive jumps, calculation of coefficients $\coeffa(\rho,\zeta)$ and $\coeffb(\rho,\zeta)$ is much easier, since the corresponding expressions in 
(\ref{formula_cn_a_b}) and (\ref{formula_cn_b_a}) are just finite products. Moreover, calculation of the coefficients $\coeffa(\hat\rho,\hat\zeta)$
and $\coeffb(\hat\rho,\hat\zeta)$ is also simplified considerably even if the sequence $\{\hat \rho_n\}_{n\geq 1}$ is infinite, since we can use Wiener-Hopf factorization (\ref{WH2}) and the fact that $-\hat\zeta_n$ and $-\hat \rho_n$ are simple roots (poles) of function $\psi(z)-q$. For example we can compute
coefficients $\coeffa_n(\hat\rho,\hat\zeta)$ as follows:
 \beqq
\coeffa_n(\hat\rho,\hat\zeta)&=&\frac{1}{\hat \zeta_n} {\textrm {Res}}\left(\phiqm(-\i z): z=- \hat \zeta_n \right)\\
&=&
\frac{1}{\hat \zeta_n}{\textrm {Res}}\left( \frac{q}{(q-\psi(z))\phiqp(- \i z)} : z=- \hat \zeta_n\right)=
- \frac{q}{\hat \zeta_n \psi'(-\hat \zeta_n) \phiqp(\i \hat \zeta_n)},
 \eeqq
and this last expression is more convenient than (\ref{formula_cn_a_b}), since usually we know $\psi'(z)$ explicitly and $\phiqp(z)$ is just a rational function. 
\end{remark}

\section{One-sided exit problem}\label{one-sided}

Doney and Kyprianou \cite{Doney} have given a detailed characterization of the one-sided exit problem through the so-called quintuple law. The latter can be easily integrated out to give the following discounted triple law which is also known in the actuarial mathematics literature as the Gerber-Shiu measure (cf. \cite{biff}). The following result holds for general L\'evy processes.

\begin{lemma} Fix $c>0$. Define
\[
\tau^+_c = \inf\{t>0: X_t >c\}.
\] 
For all $q,y, z>0$ and  $u\in[0,c \vee z]$ we have 
\beqq
&&\mathbb{E}\left[ e^{-q\tau^+_c} \ind( X_{\tau^+_c}-c\in{\rm d}y \; ; \;  c - X_{\tau^+_c-}\in{\rm d}z \; ;\;  c - \overline{X}_{\tau^+_c-} \in {\rm d}u)   \right]
\\
&& \hspace{3cm}=\frac{1}{q} \p(\xsup_{\ee(q)} \in c  - \d u) \p(-\xinf_{\ee(q)} \in \d z - u) \Pi({\rm d}y + z).
\eeqq
\end{lemma}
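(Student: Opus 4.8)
The plan is to read off this identity from the quintuple law at first passage of Doney and Kyprianou~\cite{Doney} by discounting the two time--components of $\tau^{+}_{c}$ separately and then recognising the resulting $q$--discounted ladder renewal measures as the laws of $\xsup_{\ee(q)}$ and $\xinf_{\ee(q)}$. Write $G$ for the time of the last visit of $X$ to its running supremum strictly before $\tau^{+}_{c}$, so that $G<\tau^{+}_{c}$ and $X_{G}=\overline{X}_{G}=\overline{X}_{\tau^{+}_{c}-}$, and let $U^{(+)}(\d s,\d x)$ and $U^{(-)}(\d s,\d x)$ be the bivariate renewal measures of the (killed) ascending and descending ladder processes of $X$, in the usual normalisation. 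The quintuple law asserts that, on $\{\tau^{+}_{c}<\infty\}$ and for $s,t\ge0$, $y>0$ and $0\le u\le z$,
\beqq
&&\p\big(G\in\d s\,;\,\tau^{+}_{c}-G\in\d t\,;\,c-\overline{X}_{\tau^{+}_{c}-}\in\d u\,;\,c-X_{\tau^{+}_{c}-}\in\d z\,;\,X_{\tau^{+}_{c}}-c\in\d y\big)\\
&&\hspace{3cm}=U^{(+)}(\d s,c-\d u)\,U^{(-)}(\d t,\d z-u)\,\Pi(\d y+z).
\eeqq
For $u$ outside $[0,c\wedge z]$ both sides of the asserted identity vanish (either a configuration impossible for $\tau^{+}_{c}$, or a measure with empty support on the right); the creeping contribution sits on $\{y=0,\ z=0\}$ and is irrelevant for $y,z>0$; and $\{\tau^{+}_{c}=\infty\}$ carries no mass on the left because $e^{-q\tau^{+}_{c}}=0$ there.

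Since $\tau^{+}_{c}=G+(\tau^{+}_{c}-G)$ we have $e^{-q\tau^{+}_{c}}=e^{-qG}\,e^{-q(\tau^{+}_{c}-G)}$, a product of a function of $s$ alone and a function of $t$ alone. Weighting the quintuple law by $e^{-qs}e^{-qt}$ and integrating out the first two coordinates over $[0,\infty)^{2}$ — legitimate by Tonelli, all terms being non-negative — collapses it to
\beqq
&&\e\big[e^{-q\tau^{+}_{c}}\,\ind(X_{\tau^{+}_{c}}-c\in\d y\,;\,c-X_{\tau^{+}_{c}-}\in\d z\,;\,c-\overline{X}_{\tau^{+}_{c}-}\in\d u)\big]\\
&&\hspace{2cm}=\Big[\int_{[0,\infty)}e^{-qs}\,U^{(+)}(\d s,c-\d u)\Big]\Big[\int_{[0,\infty)}e^{-qt}\,U^{(-)}(\d t,\d z-u)\Big]\,\Pi(\d y+z).
\eeqq

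It then remains to identify the two $q$--discounted renewal measures with the laws of the exponentially sampled supremum and infimum. Taking Laplace transforms in the spatial variable one has $\int_{[0,\infty)^{2}}e^{-qs-\beta x}\,U^{(+)}(\d s,\d x)=\kappa(q,\beta)^{-1}$, whereas $\e[e^{-\beta\xsup_{\ee(q)}}]=\phiqp(\i\beta)=\kappa(q,0)/\kappa(q,\beta)$, the identity already used in the proof of Theorem~\ref{M_class_properties}(iv). Comparing transforms yields $\int_{[0,\infty)}e^{-qs}\,U^{(+)}(\d s,\d x)=\kappa(q,0)^{-1}\,\p(\xsup_{\ee(q)}\in\d x)$, and the mirror argument gives $\int_{[0,\infty)}e^{-qt}\,U^{(-)}(\d t,\d x)=\hat\kappa(q,0)^{-1}\,\p(-\xinf_{\ee(q)}\in\d x)$, with $\hat\kappa$ the descending ladder exponent. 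Substituting $x=c-u$ in the first and $x=z-u$ in the second, and using the standard Wiener--Hopf normalisation $\kappa(q,0)\hat\kappa(q,0)=q$ (cf.~\cite{Kyprianou}), the product of the two prefactors equals $q^{-1}$, which is exactly the claimed formula.

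The only genuinely delicate point is this last identification: the quintuple law and the relation $\kappa(q,0)\hat\kappa(q,0)=q$ must be read under the same normalisation of local time at the supremum and at the infimum. This is harmless, since rescaling those local times multiplies $U^{(+)},U^{(-)}$ by, and divides $\kappa(q,0),\hat\kappa(q,0)$ by, reciprocal constants, leaving both the quintuple law and the final identity unchanged; alternatively the normalisation question can be bypassed altogether by running the compensation-formula/excursion argument underlying the quintuple law (see~\cite{Doney}) with an independent exponential clock of rate $q$ present from the outset, which produces the $q$--discounted renewal measures — hence the laws of $\xsup_{\ee(q)}$ and $-\xinf_{\ee(q)}$ — directly, without passing through the undiscounted version. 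Everything else is routine bookkeeping.
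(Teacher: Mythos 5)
Your proof is correct and follows essentially the same route as the paper, which obtains this lemma by integrating out the quintuple law of Doney and Kyprianou; you have simply supplied the details the paper leaves implicit (discounting the two time components, identifying the $q$-discounted ladder renewal measures with the laws of $\xsup_{\ee(q)}$ and $-\xinf_{\ee(q)}$, and using $\kappa(q,0)\hat\kappa(q,0)=q$). Your side remark that the support in $u$ is really $[0,c\wedge z]$ is also apt.
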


Taking account of formulas (\ref{dist_sup_X}) this gives us the following immediate corollary for the $M$-class of L\'evy processes.

\begin{corollary}
Fix $c>0$. For all $q,y, z>0$ and $u\in[0,c \vee z]$  we have 
\beqq
&&\mathbb{E}\left[ e^{-q\tau^+_c} \ind( X_{\tau^+_c}-c\in{\rm d}y \; ; \;  c - X_{\tau^+_c-}\in{\rm d}z \; ;\;  c - \overline{X}_{\tau^+_c-} \in {\rm d}u)   \right]
 \\
&& \hspace{3cm}= \frac{1}{q} \left[ \bar\coeffa(\rho,\zeta)^T \times \bar\vv(\zeta,c-u) \right] 
\left[ \bar\coeffa(\hat \rho,\hat \zeta)^T \times \bar\vv(\hat \zeta,z - u)\right]\Pi({\rm d}y + z)\d z \d u.
\eeqq
\end{corollary}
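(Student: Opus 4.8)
The plan is to obtain the result by reading it straight off the preceding Lemma: since that Lemma holds for an arbitrary L\'evy process, it applies verbatim to any member of the $M$-class, and the only work left is to specialise the three factors appearing on its right-hand side using the explicit laws of $\xsup_{\ee(q)}$ and $\xinf_{\ee(q)}$ furnished by Theorem \ref{M_class_properties}(ii), i.e.\ by formulas (\ref{dist_sup_X}).

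First I would treat the factor $\p(\xsup_{\ee(q)}\in c-\d u)$. By the first identity in (\ref{dist_sup_X}), the law of $\xsup_{\ee(q)}$ on $[0,\infty)$ is $\bar\coeffa(\rho,\zeta)^T\times\bar\vv(\zeta,x)\,\d x$, whose leading coordinate $\coeffa_0(\rho,\zeta)\delta_0(x)$ encodes the possible atom at the origin (present exactly when $0$ is irregular for $(0,\infty)$, by Theorem \ref{M_class_properties}(iii)). Substituting $x=c-u$ and rereading this as a measure in $u$ gives $\bar\coeffa(\rho,\zeta)^T\times\bar\vv(\zeta,c-u)\,\d u$, the term $\coeffa_0(\rho,\zeta)\delta_0(c-u)\,\d u$ contributing the atom $\coeffa_0(\rho,\zeta)\delta_c(\d u)$; since $\xsup_{\ee(q)}\ge 0$ this measure is automatically supported on $u\le c$, so the nominal range $u\in[0,c\vee z]$ in the Lemma causes no trouble (the part $u>c$ contributes nothing). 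Similarly, the factor $\p(-\xinf_{\ee(q)}\in\d z-u)$ comes from the second identity in (\ref{dist_sup_X}) with $x=z-u$, yielding $\bar\coeffa(\hat\rho,\hat\zeta)^T\times\bar\vv(\hat\zeta,z-u)\,\d z$, again automatically supported on $u\le z$.

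It then remains only to collect the pieces: the two displayed bilinear expressions, the L\'evy measure $\Pi(\d y+z)$ carried over unchanged from the Lemma, and the prefactor $1/q$; gathering the differentials produces precisely $\frac1q\bigl[\bar\coeffa(\rho,\zeta)^T\times\bar\vv(\zeta,c-u)\bigr]\bigl[\bar\coeffa(\hat\rho,\hat\zeta)^T\times\bar\vv(\hat\zeta,z-u)\bigr]\Pi(\d y+z)\,\d z\,\d u$, as claimed. Well-definedness of the right-hand side is not an issue, since all coordinates of $\bar\coeffa(\rho,\zeta)$ and $\bar\coeffa(\hat\rho,\hat\zeta)$ are nonnegative by Lemma \ref{analytical_lemma} and the relevant series converge because (\ref{dist_sup_X}) are genuine probability laws. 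There is essentially no hard step here — the entire analytic content sits in Theorem \ref{M_class_properties}(ii), already established — so the only thing to get right is the bookkeeping: interpreting the Dirac-delta coordinate of $\bar\vv$ when its argument is an affine function of the integration variable, and using the built-in positivity of $\xsup_{\ee(q)}$ and $-\xinf_{\ee(q)}$ to pin down the correct support in $u$.
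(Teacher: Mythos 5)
Your proposal is correct and matches the paper's approach exactly: the paper presents this corollary as an immediate consequence of the preceding Lemma obtained by substituting the explicit laws (\ref{dist_sup_X}) of $\xsup_{\ee(q)}$ and $-\xinf_{\ee(q)}$ into its right-hand side, which is precisely what you do. Your additional care with the Dirac-delta coordinate and the support in $u$ is sound bookkeeping that the paper leaves implicit.
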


Often, one is only interested in the discounted overshoot distribution. In principle this can be obtained from the above formula by integrating out the variables $z$ and $u$. However, it turns out to be more straightforward to prove this directly, particularly as otherwise it would require us to specify in more detail the L\'evy measure in terms of poles and roots for the $M$-class (cf. Corollary \ref{corollary_Levy_measure}). The following result does precisely this, but in addition, it also gives us the probability of creeping.

\begin{theorem}\label{thm_one_sided_exit} 
 Define a matrix $\aa=\{a_{i,j}\}_{i,j\ge 0}$ as
\beq\label{def_A}
 a_{i,j}=
 \begin{cases}
  0  \;\;\; & {\textnormal {if  }}  i=0, \; j\ge 0 \\
  \coeffa_i(\rho,\zeta) \coeffb_0(\zeta,\rho) \;\;\; & {\textnormal {if  }}   i \ge 1, \; j=0 \\
 \displaystyle \frac{\coeffa_i(\rho,\zeta) \coeffb_j(\zeta,\rho) }{\rho_j-\zeta_i}  \;\;\; &{\textnormal {if  }} i\ge 1, \; j \ge 1 
 \end{cases}
\eeq
Then for $c>0$ and $y\geq 0$ we have 
 \beq\label{eqn_one_sided_exit}
   \e \left[ e^{-q \tau_c^+} \ind \left(X_{\tau_c^+} -c\in \d y\right) \right]&=& 
\bar \vv(\zeta,c)^T\times \aa \times \bar \vv(\rho,y) \d y.
 \eeq
\end{theorem}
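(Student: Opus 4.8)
The plan is to bypass integrating out the triple law of the preceding corollary --- which would drag in the poles-and-roots description of $\Pi$ from Corollary~\ref{corollary_Levy_measure} --- and instead to work from the Pecherskii--Rogozin identity, which involves only the ascending Wiener--Hopf factor. Recall (see e.g.\ \cite{Kyprianou}) that for $q>0$ and $\alpha,\theta\geq 0$ with $\alpha\neq\theta$,
\[
\int_0^\infty e^{-\alpha c}\,\e\!\left[e^{-q\tau_c^+-\theta(X_{\tau_c^+}-c)}\ind_{\{\tau_c^+<\infty\}}\right]\d c=\frac{1}{\alpha-\theta}\left(1-\frac{\kappa(q,\theta)}{\kappa(q,\alpha)}\right),
\]
where $\kappa(q,\cdot)$ is the Laplace exponent of the ascending ladder height process. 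Since $\phiqp(\i z)=\kappa(q,0)/\kappa(q,z)$ (see the proof of Theorem~\ref{M_class_properties}(iv)), putting $\phi(z):=\phiqp(\i z)=\prod_{n\geq1}(1+z/\rho_n)/(1+z/\zeta_n)$ we have $\kappa(q,\theta)/\kappa(q,\alpha)=\phi(\alpha)/\phi(\theta)$, so the right-hand side above is $(\phi(\theta)-\phi(\alpha))/((\alpha-\theta)\phi(\theta))$. On the other hand, since $\int_0^\infty e^{-\alpha c}\zeta_ie^{-\zeta_ic}\,\d c=\zeta_i/(\zeta_i+\alpha)$, $\int_{[0,\infty)}e^{-\theta y}\delta_0(y)\,\d y=1$, $\int_0^\infty e^{-\theta y}\rho_je^{-\rho_jy}\,\d y=\rho_j/(\rho_j+\theta)$ and the row $i=0$ of $\aa$ in (\ref{def_A}) vanishes, the double Laplace transform (in $c\mapsto\alpha$ and $y\mapsto\theta$) of $\bar\vv(\zeta,c)^T\times\aa\times\bar\vv(\rho,y)\,\d y$ equals
\[
\sum_{i\geq1}\frac{\zeta_i}{\zeta_i+\alpha}\left(\coeffa_i(\rho,\zeta)\coeffb_0(\zeta,\rho)+\sum_{j\geq1}\frac{\coeffa_i(\rho,\zeta)\coeffb_j(\zeta,\rho)}{\rho_j-\zeta_i}\cdot\frac{\rho_j}{\rho_j+\theta}\right).
\]
The whole proof then reduces to showing that these two expressions coincide, after which uniqueness of Laplace transforms finishes it.

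To deal with the $\alpha$-dependence I would apply the first partial fraction decomposition (\ref{part_fraction1}) to $\phi$, obtaining $\phi(\theta)-\phi(\alpha)=(\alpha-\theta)\sum_{i\geq1}\coeffa_i(\rho,\zeta)\zeta_i/((\zeta_i+\theta)(\zeta_i+\alpha))$, so that the Pecherskii--Rogozin right-hand side becomes $\sum_{i\geq1}\coeffa_i(\rho,\zeta)\frac{\zeta_i}{\zeta_i+\alpha}\cdot\frac{1}{(\zeta_i+\theta)\phi(\theta)}$. Comparing with the display above and using that the functions $\alpha\mapsto\zeta_i/(\zeta_i+\alpha)$, $i\geq1$, are linearly independent, everything comes down to the scalar identity
\[
\frac{1}{(\zeta_i+\theta)\phi(\theta)}=\coeffb_0(\zeta,\rho)+\sum_{j\geq1}\frac{\coeffb_j(\zeta,\rho)\,\rho_j}{(\rho_j-\zeta_i)(\rho_j+\theta)},\qquad i\geq1.
\]

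This identity is the heart of the matter, and I expect it to be the main obstacle. I would obtain it from the second partial fraction decomposition (\ref{part_fraction2}), $1/\phi(\theta)=1+\coeffb_0(\zeta,\rho)\theta+\sum_{j\geq1}\coeffb_j(\zeta,\rho)\,\theta/(\rho_j+\theta)$: multiply through by $1/(\zeta_i+\theta)$, decompose $\theta/((\zeta_i+\theta)(\rho_j+\theta))$ and $\theta/(\zeta_i+\theta)$ into partial fractions, and note that all the resulting terms proportional to $1/(\zeta_i+\theta)$ collect into one with coefficient $1-\coeffb_0(\zeta,\rho)\zeta_i-\zeta_i\sum_{j\geq1}\coeffb_j(\zeta,\rho)/(\rho_j-\zeta_i)$ --- which is precisely the value that (\ref{part_fraction2}) assigns to $1/\phi(z)$ at $z=-\zeta_i$. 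Since the interlacing (\ref{interlacing}) makes $\phi$ have a simple pole at $z=-\zeta_i$, that value is $0$, the $1/(\zeta_i+\theta)$ term vanishes, and what is left is exactly the claimed right-hand side. The convergence facts needed along the way --- in particular $\sum_{j\geq1}\coeffb_j(\zeta,\rho)/\rho_j<\infty$, which is what makes (\ref{part_fraction2}) legitimate at $z=-\zeta_i$ --- all follow from Lemma~\ref{analytical_lemma}, and the same bounds justify the interchanges of summation and the Fubini step used in identifying the double transform. Beyond this partial-fraction bookkeeping (and keeping the two coefficient families $\coeffa_n(\rho,\zeta)$ and $\coeffb_n(\zeta,\rho)$ apart), there is essentially no difficulty; the only probabilistic input is the single application of the Pecherskii--Rogozin identity.

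Putting the pieces together, the two double Laplace transforms agree; moreover the $\theta$-free term $\coeffa_i(\rho,\zeta)\coeffb_0(\zeta,\rho)$ inverts to an atom of the overshoot law at $y=0$, so the identity also delivers the discounted creeping probability $\e[e^{-q\tau_c^+};X_{\tau_c^+}=c]=\coeffb_0(\zeta,\rho)\sum_{i\geq1}\coeffa_i(\rho,\zeta)\zeta_ie^{-\zeta_ic}$, which is positive exactly when $X$ creeps upwards, consistently with Theorem~\ref{M_class_properties}(iv). Finally, since the double Laplace transform of a finite measure is injective, (\ref{eqn_one_sided_exit}) holds for Lebesgue-almost every $c>0$; as its right-hand side is continuous in $c$ while its left-hand side is right-continuous in $c$ (a standard property of discounted first-passage functionals, since $\tau_{c'}^+\downarrow\tau_c^+$ as $c'\downarrow c$), the identity holds for every $c>0$ and every $y\geq0$, completing the proof.
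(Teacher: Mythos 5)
Your argument is correct, and it reaches the result by a genuinely different probabilistic route than the paper. The paper starts from the identity of Alili--Kyprianou (Lemma 1 in \cite{Alili2005}), $\e [ e^{-q \tau_c^+ -z (X_{\tau_c^+} -c) } ]=\e[ e^{-z (\overline X_{\ee(q)}-c)} \ind ( \xsup_{\ee(q)}>c)]/\e[ e^{-z \overline X_{\ee(q)}} ]$, which hands over the Laplace transform of the overshoot for each \emph{fixed} level $c$; combined with (\ref{dist_sup_X}) this produces the factor $\sum_{i\ge1}\coeffa_i(\rho,\zeta)\zeta_i e^{-\zeta_i c}/(z+\zeta_i)$ directly, and only one Laplace inversion (in the overshoot variable) is needed. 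You instead start from the Pecherskii--Rogozin identity, so you carry an extra Laplace transform in $c$, must invert it, and must then upgrade the resulting almost-everywhere identity in $c$ to every $c>0$ via right-continuity of $c\mapsto\tau^+_c$ — extra steps the paper's route avoids, though your handling of them is sound. The analytic heart is identical in both proofs: the partial-fraction identity
\[
\frac{1}{(\zeta_i+\theta)\,\phiqp(\i\theta)}=\coeffb_0(\zeta,\rho)+\sum_{j\ge1}\frac{\coeffb_j(\zeta,\rho)}{\rho_j-\zeta_i}\,\frac{\rho_j}{\rho_j+\theta},
\]
which the paper dismisses as ``some algebraic manipulations'' of (\ref{eqn_WH_factor}) and (\ref{part_fraction2}); your derivation — splitting $\theta/((\zeta_i+\theta)(\rho_j+\theta))$ and observing that the coefficient of $1/(\zeta_i+\theta)$ is exactly the value of $1/\phi$ at $-\zeta_i$, which vanishes because $\phi$ has a pole there — is in fact a useful expansion of that gap, and the convergence point you flag ($\sum_j\coeffb_j(\zeta,\rho)/\rho_j<\infty$, which is forced by the convergence of (\ref{part_fraction2}) for $z>0$) is the right one. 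What your approach buys is independence from the law of $\overline X_{\ee(q)}$ at a fixed exponential time (only $\kappa(q,\cdot)$, i.e.\ the Wiener--Hopf factor, is used); what it costs is the extra inversion in $c$ and the attendant Fubini bookkeeping, which is slightly delicate here since the entries $\coeffa_i(\rho,\zeta)\coeffb_j(\zeta,\rho)/(\rho_j-\zeta_i)$ are not all of one sign.
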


\begin{proof}
We start with the formula from Lemma 1 in \cite{Alili2005}
\beq\label{Al_Kyp_identity}
 \e \left[ e^{-q \tau_c^+ -z (X_{\tau_c^+} -c) }  \right]=
\frac{\e\left[ e^{-z (\overline X_{\ee(q)}-c)} \ind ( \xsup_{\ee(q)}>c)\right]}{\e\left[ e^{-z \overline X_{\ee(q)}} \right]}.
\eeq
Using formula (\ref{dist_sup_X}) we find that
 \beqq
 \e\left[ e^{-z (\overline X_{\ee(q)}-c)} \ind ( \xsup_{\ee(q)}>c)\right]=
\sum\limits_{i\ge 1} \coeffa_i(\rho,\zeta) \frac{\zeta_i }{z+\zeta_i} e^{-\zeta_i c}.
\eeqq
Next, using  (\ref{eqn_WH_factor}) and (\ref{part_fraction2}) after some algebraic manipulations we find that 
 \beqq
  \frac{1}{(z+\zeta_i)\e\left[ e^{-z \overline X_{\ee(q)}} \right]} =
  \frac{1}{(z+\zeta_i)} \prod\limits_{n\ge 1}  \frac{1+\frac{z}{\zeta_n}}{1+\frac{z}{\rho_n}} &=&
  \coeffb_0(\zeta,\rho)+
\sum\limits_{j\ge 1} \frac{\coeffb_j(\zeta,\rho)}{\rho_j-\zeta_i} \frac{\rho_j}{\rho_j+z}.
 \eeqq
Combining the above three equations we find
\beqq
 \e \left[ e^{-q \tau_c^+ -z (X_{\tau_c^+} -c)}  \right]=\sum\limits_{i\ge 1} \coeffa_i(\rho,\zeta) \coeffb_0(\zeta,\rho) \zeta_i  e^{-\zeta_i c} +
\sum\limits_{i\ge 1}\sum\limits_{j\ge 1}  \frac{\coeffa_i(\rho,\zeta) \coeffb_j(\zeta,\rho) }{\rho_j-\zeta_i}  \zeta_i  e^{-\zeta_i c} \frac{\rho_j}{\rho_j+z}
\eeqq
which allows for a straightforward inversion in $z$, thereby completing the proof.
\end{proof}

\section{Entrance/exit problems for a finite interval}\label{sec_dble_exit}

The two sided exit problem is a long standing problem of interest from the theory of L\'evy processes and there are very few cases where explicit identities have been be extracted for the (discounted) overshoot distribution on either side of the interval. Classically the only examples with discounting which have been analytically tractable are those of jump diffusion processes with (hyper)-exponentially distributed jumps (see for example \cite{KV07, WZ07}) and, up to knowing the so-called scale function, spectrally negative processes, \cite{Kyprianou}. Otherwise, the only other examples, without discounting, are those of stable processes, \cite{Rogozin}, and Lamperti-stable processes, \cite{Chaumont2009}.

Recall that
\[
\tau^+_a = \inf\{t>0 : X_t >a\}, \;\;\; \tau^-_0 = \inf\{t>0 : X_t <0\}.
\]
For $f \in \ll_{\infty}(\r)$, the space of positive, measurable and uniformly bounded functions on $\mathbb{R}$, and $x\in\mathbb{R}$, define the following operators
 \beq\label{def_G}
 (\gg f)(x)&=&\ind (0\leq  x \leq a )  \e_x \left[ e^{-q \tau_a^+} f(X_{\tau_a^+}) \ind \left(\tau_a^+ < \tau_0^- \right)\right], \\ \nonumber
 (\hat \gg f)(x)&=&\ind (0 \leq  x \leq a)  \e_x \left[ e^{-q \tau_0^-} f(X_{\tau_0^-}) \ind \left(\tau_0^- < \tau_a^+ \right)\right] ,
\eeq
and for any subset $I \in \r$  define
\beq\label{def_P}
 (\pp_{I} f)(x)&=& \ind ( x \in I) \e_x \left[ e^{-q \tau_a^+}  f(X_{\tau_a^+})\right], \\ \nonumber
(\hat \pp_{I} f)(x)&=&\ind ( x \in I) \e_x \left[ e^{-q \tau_0^-}  f(X_{\tau_0^-}) \right].
 \eeq
These are bounded operators from $\ll_{\infty}(\r)$ into $\ll_{\infty}(\r)$.
Note also that if $X_0<0$ then  $\tau_0^-=0$ and similarly if $X_0>a$ then $\tau_a^+=0$. For example, when $x>a$, $\e_x \left[ e^{-q \tau_a^+}  f(X_{\tau_a^+})\right] = f(x)$ and when $x<0$, $\e_x \left[ e^{-q \tau_0^-}  f(X_{\tau_0^-}) \right] = f(x)$. 

Following similar reasoning to Rogozin \cite{Rogozin} (see also \cite{KK05}), an application of the 
Markov property tells us that for all $x\in [0,a]$ we have
 \beqq
  \e_x\left[ e^{-q \tau_a^+}  f(X_{\tau_a^+})\right]&=& \e_x \left[ e^{-q \tau_a^+} f(X_{\tau_a^+}) \ind \left(\tau_a^+ < \tau_0^- \right)\right]\\
 &&+  \e_x \left[ e^{-q \tau_0^-} \ind \left(\tau_0^- < \tau_a^+ \right)
\e_{X_{\tau_0^-}} \left[ e^{-q \tau_a^+}  f(X_{\tau_a^+})\right]
\right].
 \eeqq
Thus we have the following operator identities:
\beq\label{eq_operator}
 \begin{cases}
\pp_{[0,a]}&=\gg+\hat \gg \pp_{(-\infty,0]}, \\
\hat \pp_{[0,a]}&=\hat \gg + \gg \hat \pp_{[a,\infty)}.
\end{cases}
\eeq 
Consider a space $\hh$ of bounded linear operators on $\ll_{\infty}(\r)$ with the norm
\beqq
 ||\gg||=\sup\limits_{ ||f||_{\infty}\le 1 }  || \gg f ||_{\infty} .
\eeqq

In the next theorem, which holds for general L\'evy processes, we give a series representation for $\mathcal{G}$ in terms of $\hat \pp_{[0,a]},\, \pp_{(-\infty,0]},\, \hat \pp_{[a,\infty)},\,  \pp_{(-\infty,0]}$ and a similar one for $\hat{\mathcal{G}}$.
In the case of stable processes and general L\'evy processes respectively,  Rogozin \cite{Rogozin} and Kadankov and Kadankova \cite{KK05} have shown similar series representations. The presentation we give above only differs in that it takes the form of linear operators. This turns out to be more convenient for the particular application to the $M$-class of processes.

\begin{theorem}\label{solution_to_operator_eqns}
Assume $q>0$. A pair of operators  $(\gg,\hat \gg)$ given by (\ref{def_G}) is the unique solution in $\hh$ to the system of equations  (\ref{eq_operator}). 
Moreover, this solution can also be represented in series form
\begin{equation}
\gg =\pp_{[0,a]}-\hat \pp_{[0,a]} \pp_{(-\infty,0]}+\pp_{[0,a]} \hat \pp_{[a,\infty)} \pp_{(-\infty,0]}-\hat \pp_{[0,a]} \pp_{(-\infty,0]}\hat \pp_{[a,\infty)} \pp_{(-\infty,0]}+\dots 
\label{inf_srs_sltn_to_G_hat_G}
\end{equation}
and
\[
\hat \gg =\hat \pp_{[0,a]}- \pp_{[0,a]} \hat \pp_{[a,\infty)}+ \hat \pp_{[0,a]} \pp_{(-\infty,0]} \hat \pp_{[a,\infty)} -
\pp_{[0,a]} \hat \pp_{[a,\infty)} \pp_{(-\infty,0]} \hat \pp_{[a,\infty)} +\dots .
\]
where, in both cases, convergence is exponential in the following sense. There exist $\eta\in(0,1)$ and a constant $C>0$ such that if $\mathcal{S}_n$ is the sum of the first $n$ terms in the series describing $\gg$ then 
$
||\gg - \mathcal{S}_n||\leq C\eta^{n+1},
$
with a similar statement holding for $\hat \gg$.

\end{theorem}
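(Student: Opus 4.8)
The plan is to split the statement into three parts: (a) the pair $(\gg,\hat\gg)$ solves the system \eqref{eq_operator}; (b) the system has a unique solution in $\hh$; (c) the unique solution admits the series representation \eqref{inf_srs_sltn_to_G_hat_G} with exponential convergence. Part (a) is already established by the strong Markov argument preceding the theorem, so nothing further is needed there. For part (c) the idea is the familiar one of iterating the fixed-point equations: substitute the second equation of \eqref{eq_operator} into the first to get $\pp_{[0,a]}=\gg+\hat\gg\pp_{(-\infty,0]}$ with $\hat\gg=\hat\pp_{[0,a]}+\gg\hat\pp_{[a,\infty)}$, hence $\gg = \pp_{[0,a]}-\hat\pp_{[0,a]}\pp_{(-\infty,0]}-\gg\,\hat\pp_{[a,\infty)}\pp_{(-\infty,0]}$. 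Writing $\kk=\hat\pp_{[a,\infty)}\pp_{(-\infty,0]}$, this reads $\gg(\ii+\kk)=\pp_{[0,a]}-\hat\pp_{[0,a]}\pp_{(-\infty,0]}$, and provided $||\kk||<1$ one inverts $\ii+\kk$ by a Neumann series $\sum_{n\ge0}(-\kk)^n$, which upon expanding gives exactly \eqref{inf_srs_sltn_to_G_hat_G}; the analogous manipulation with the roles of the two half-lines swapped gives the series for $\hat\gg$, and uniqueness in $\hh$ is immediate once $\ii+\kk$ is invertible since any solution of \eqref{eq_operator} must satisfy $\gg(\ii+\kk)=\pp_{[0,a]}-\hat\pp_{[0,a]}\pp_{(-\infty,0]}$. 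The exponential convergence statement with $\eta=||\kk||$ and a suitable $C$ is then just the tail estimate for a geometric series.

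The main obstacle, and the only genuinely substantive point, is establishing the operator-norm bound $||\kk||=||\hat\pp_{[a,\infty)}\pp_{(-\infty,0]}||\le\eta<1$ (and the symmetric bound). Since each of $\pp_{(-\infty,0]}$, $\hat\pp_{[a,\infty)}$ has norm at most $1$ (they are expectations of $e^{-q\tau}f(X_\tau)$ against a function with $||f||_\infty\le1$, discounted at rate $q>0$), the product has norm at most $1$, but we need strict contraction. The point is that $\pp_{(-\infty,0]}f$ evaluated at a point $x\le 0$ equals $f(x)$, but once we then apply $\hat\pp_{[a,\infty)}$ we are at a point $\ge a$, from which the process must travel down below $0$ before we pick up any contribution; because $q>0$ the discount factor $e^{-q\tau^-_0}$ has expectation strictly less than $1$ uniformly over starting points in a compact set, and more carefully one iterates: $\kk^{2}$ forces a round trip $[a,\infty)\to(-\infty,0]\to[a,\infty)$, and the expected discount over two crossings of the strip $[0,a]$ is bounded by some $\eta<1$ by a standard argument (the probability of crossing the strip of width $a$ in unit time is bounded away from $1$, hence the discounted crossing operator contracts). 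I would make this precise by showing there is $\delta>0$ with $\e_x[e^{-q\tau}]\le 1-\delta$ for the relevant first-passage times $\tau$ uniformly in the starting point, using that $X$ is not a compound Poisson process and $q>0$; alternatively one can cite that $\pp_{[0,a]}$, $\hat\pp_{[0,a]}$ are the operators appearing in Kadankov–Kadankova \cite{KK05} where the same geometric decay is obtained.

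A clean way to package the contraction is to note that for $f\in\ll_\infty(\r)$ with $||f||_\infty\le1$ and $x\ge a$,
\[
|(\hat\pp_{[a,\infty)}\pp_{(-\infty,0]}f)(x)|
\le \e_x\!\left[e^{-q\tau^-_0}\,\bigl|(\pp_{(-\infty,0]}f)(X_{\tau^-_0})\bigr|\right]
\le \e_x\!\left[e^{-q\tau^-_0}\right]\le \sup_{y\ge a}\e_y\!\left[e^{-q\tau^-_0}\right]=:\eta_1,
\]
and symmetrically $||\pp_{(-\infty,0]}\hat\pp_{[a,\infty)}||\le\sup_{y\le 0}\e_y[e^{-q\tau^+_a}]=:\eta_2$, so it remains only to check $\eta_1,\eta_2<1$. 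This last fact follows because, starting from any $y\ge a$, there is a positive lower bound — uniform in $y$ by spatial homogeneity, since $\tau^-_0$ under $\p_y$ has the law of $\tau^-_{-y}\le\tau^-_{-a}$ under $\p_0$ — on the probability that $\tau^-_0$ exceeds any fixed $t_0>0$, whence $\e_y[e^{-q\tau^-_0}]\le 1-(1-e^{-qt_0})\p_0(\tau^-_{-a}>t_0)<1$; here $\p_0(\tau^-_{-a}>t_0)>0$ holds for all $a>0$ because $X$ is not compound Poisson, so $\underline X_{t_0}>-a$ with positive probability. Setting $\eta=\max(\eta_1,\eta_2)$ completes the argument, and tracking constants in the Neumann series gives $||\gg-\mathcal S_n||\le C\eta^{n+1}$ with $C$ depending only on $||\pp_{[0,a]}-\hat\pp_{[0,a]}\pp_{(-\infty,0]}||$ and $\eta$.
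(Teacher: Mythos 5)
Your proposal follows essentially the same route as the paper: iterate the fixed‑point system, invert by a Neumann series, and justify convergence by showing that the relevant discounted crossing operators are strict contractions because $q>0$. One correction is needed, though. Solving the second equation of (\ref{eq_operator}) for $\hat\gg$ gives $\hat\gg=\hat\pp_{[0,a]}-\gg\hat\pp_{[a,\infty)}$ (minus, not plus), so substitution into the first equation yields $\gg=\pp_{[0,a]}-\hat\pp_{[0,a]}\pp_{(-\infty,0]}+\gg\kk$ with $\kk=\hat\pp_{[a,\infty)}\pp_{(-\infty,0]}$, i.e.\ $\gg(\ii-\kk)=\pp_{[0,a]}-\hat\pp_{[0,a]}\pp_{(-\infty,0]}$, and the Neumann series is $\sum_{n\ge 0}\kk^n$, not $\sum_{n\ge 0}(-\kk)^n$. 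As written, your expansion would flip the sign of every term containing an odd power of $\kk$ (e.g.\ it would produce $-\pp_{[0,a]}\hat\pp_{[a,\infty)}\pp_{(-\infty,0]}$ as the third term) and so would not reproduce (\ref{inf_srs_sltn_to_G_hat_G}); the alternating signs in the displayed series come from the two terms $\pp_{[0,a]}-\hat\pp_{[0,a]}\pp_{(-\infty,0]}$ inside each block $(\pp_{[0,a]}-\hat\pp_{[0,a]}\pp_{(-\infty,0]})\kk^n$, not from alternating powers of $-\kk$. With that sign fixed, your argument is sound.

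On the contraction, which you correctly identify as the only substantive point: the paper gets it slightly more cheaply by bounding each factor separately, $\|\pp_{(-\infty,0]}\|\le\sup_{x\le 0}\e_x[e^{-q\tau_a^+}]=\e_0[e^{-q\tau_a^+}]<1$ (starting lower only delays $\tau_a^+$ by spatial homogeneity, and $\tau_a^+>0$ a.s.\ under $\p_0$ since $a>0$), and likewise $\|\hat\pp_{[a,\infty)}\|<1$, then takes $\eta$ to be the maximum of these two norms. You instead bound the composite directly via $\sup_{y\ge a}\e_y[e^{-q\tau_0^-}]$ and supply the explicit $t_0$-argument for strictness; your uniform reduction to $\e_0[e^{-q\tau_{-a}^-}]$ is the right way to make that rigorous, and both versions work. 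A small remark: the strictness does not actually require the not-compound-Poisson hypothesis, since $\p_0(\underline{X}_{t_0}>-a)>0$ for small $t_0$ for any L\'evy process.
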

\begin{proof}  It suffices to prove the result for $\gg$, the proof for $\hat \gg$ follows by duality.
For  uniqueness, we note first that the norm of operators $\pp_{(-\infty,0]}$ is stricly less than one:
\beqq
|| \pp_{(-\infty,0]}|| = \sup\limits_{ ||f||_{\infty}\le 1 } \left[ \sup\limits_{x\in \r} \ind ( x\le 0 )  \e_x \left[ e^{-q \tau_a^+}  f(X_{\tau_a^+})\right] \right] \le
  \sup\limits_{x\in \r} \ind ( x\le 0 )  \e_x \left[ e^{-q \tau_a^+} \right]=  \e_0 \left[ e^{-q \tau_a^+} \right]<1
\eeqq
and similarly we find  $||\hat \pp_{[a,\infty)}||<1$. Uniqueness also follows from the latter fact. Indeed if we assume that we have another pair of solutions 
$( G, \hat G)$, then using the system of equations  (\ref{eq_operator}) we find that the difference $\gg-G$ must satisfy the equation
\beqq
 \gg - G = (\gg - G)  \hat \pp_{[a,\infty)} \pp_{(-\infty,0]}.
\eeqq
Thus we find 
\beqq
|| \gg - G|| \le || \gg - G|| \times ||  \pp_{[a,\infty)} || \times || \pp_{(-\infty,0]} ||
\eeqq
and since $||  \pp_{[a,\infty)} || \vee || \pp_{(-\infty,0]} ||<1$ we conclude that $||\gg - G||=0$ and $G=\gg$.
To establish the series representation (\ref{inf_srs_sltn_to_G_hat_G}) we use (\ref{eq_operator}) to find 
\beqq
\gg =\pp_{[0,a]}-\hat \pp_{[0,a]} \pp_{(-\infty,0]}+\gg \hat \pp_{[a,\infty)} \pp_{(-\infty,0]}.
\eeqq
By iterating this equation we obtain series representation (\ref{inf_srs_sltn_to_G_hat_G}), which converges at an exponential rate as described in the statement of the theorem with 
$\eta = || \hat \pp_{[a,\infty)} || \vee || \pp_{(-\infty,0]} ||\in(0,1)$.
\end{proof}

The next theorem converts the above general setting to the specific setting of $M$-processes.

% \begin{remark} 
%  If we truncate series (\ref{inf_srs_sltn_to_G_hat_G}) after $2n+1$ (recall that the resulting sum is denoted by $\mathcal{S}_{2n+1}$) then 
% it is straightforward to prove by induction using (\ref{eq_operator}) that 
% \[
%  \mathcal{S}_{2n+1} = \mathcal{G} + \hat{\mathcal{G}} \mathcal{A}_{2n+1}
% \]
% where $\mathcal{A}_{2n+1}  = \mathcal{P}_{(-\infty,0]}\hat{\mathcal{P}}_{[a,\infty)}\cdots\hat{\mathcal{P}}_{[a,\infty)}\mathcal{P}_{(-\infty, 0]}$ such that there are $2n+1$ alternating terms. This gives the probabilistic interpretation that the difference $\mathcal{S}_{2n+1} - \mathcal{G} $
% can be expressed in terms of those paths which exit out of $[0,a]$ below the origin followed by $2n+1$ crossings of the interval $[0,a]$ ending finally in $[a,\infty)$.
% \textcolor{blue}{I AM NOT SURE THIS CAPTURES THE ESSENCE OF THE `ALGEBRA' OF PATHS THAT YOU ARE LOOKING FOR}.
% 
% %we have a  nice probabilistic interpretation: 
% %this is essentially a probability that we  start at $x$ and either exit interval 
% %$(0,a)$ at $a$ landing at $y>a$ or cross interval $(0,a)$ at least $n$ times before landing at $y$.
% \end{remark}

\begin{theorem}{\bf (First exit from  a finite interval)}\label{thm_two_sided} 
Let $a>0$ and define a matrix $\bb=\bb(\hat \rho, \zeta,a)=\{b_{i,j}\}_{i,j\ge 0}$  with
\beq\label{def_B}
 b_{i,j}=
 \begin{cases}
 \displaystyle  \zeta_j e^{-a \zeta_j} \;\;\; & {\textnormal {if  }} i=0, \; j\ge 1 \\ 
  0\;\;\; & {\textnormal {if  }}  i \ge 0, \; j=0 \\ 
 \displaystyle \frac{\hat \rho_i \zeta_j}{\hat \rho_i+\zeta_j}  e^{- a \zeta_j}  \;\;\; &{\textnormal {if  }} i\ge 1, \; j \ge 1 ,
 \end{cases}
\eeq
and  similarly $\hat \bb=\bb(\rho, \hat \zeta,a)$.There exist matrices $\cc_1$, $\cc_2$ and $\hat \cc_1$, $\hat \cc_2$ such that for $x \in (0,a)$  we have
 \beq\label{eq_dble_exit}\nonumber 
  \e_x \left[ e^{-q \tau_a^+} \ind \left(X_{\tau_a^+} \in \d y \; ; \; \tau_a^+<\tau_0^- \right)\right]&=&
 \left[ \bar \vv(\zeta,a-x)^T \times \cc_1 + \bar \vv(\hat \zeta,x)^T \times \cc_2 \right]  \times \bar \vv(\rho, y-a) \d y \\ 
\e_x \left[ e^{-q \tau_0^-} \ind \left(X_{\tau_0^-} \in \d y \; ; \; \tau_0^-<\tau_a^+ \right)\right]&=&
 \left[  \bar \vv(\hat \zeta,x)^T \times \hat \cc_1 + \bar \vv(\zeta,a-x)^T \times \hat \cc_2 \right] \times \bar \vv(\hat \rho, -y) \d y
 \eeq
These matrices satisfy the following system of linear equations
 \beq\label{matrix_eqns}
\begin{cases}
 \cc_1&=\aa-\hat \cc_2 \bb \aa \\
 \hat \cc_2  &= -  \cc_1 \hat \bb \hat \aa 
\end{cases} \qquad
 \begin{cases}
 \hat \cc_1&=\hat \aa- \cc_2 \hat \bb \hat \aa \\
 \cc_2  &= -  \hat \cc_1 \bb \aa
\end{cases}
 \eeq
where $\aa$ and $\hat \aa$ are defined by (\ref{def_A}). The system of linear equations (\ref{matrix_eqns}) can be solved iteratively with exponential convergence with respect to the matrix norm $||\cdot||_\infty$,  where for any square matrix $\mathbf{M}$, 
 \beq\label{matrix_norm}
 ||{\mathbf M}||_{\infty}=\max\limits_{i\ge 0} \sum\limits_{j\ge 0} |M_{i,j}|.
 \eeq
\end{theorem}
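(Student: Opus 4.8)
The plan is to convert the abstract operator identities of Theorem~\ref{solution_to_operator_eqns} into concrete matrix identities by evaluating the relevant operators on the specific ``eigenfunction-like'' building blocks supplied by the $M$-class structure. First I would record the key explicit ingredients. On the one hand, Theorem~\ref{thm_one_sided_exit} tells us that $\gg$ acts on test functions of the form $x\mapsto \ind(x\ge a)e^{-z(x-a)}$ by returning something expressed through $\bar\vv(\zeta,c)^T\times\aa\times\bar\vv(\rho,y)$; equivalently, the one-sided exit kernel $\e_x[e^{-q\tau_a^+}\ind(X_{\tau_a^+}\in \d y)]$ equals $\bar\vv(\zeta,a-x)^T\times\aa\times\bar\vv(\rho,y-a)\,\d y$ for $x<a$ (and $\delta(y-x)$ for $x\ge a$). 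On the other hand, the overshoot-at-$\tau_0^-$ kernel for $X$ started in $(0,a)$ is handled by duality: $\e_x[e^{-q\tau_0^-}\ind(X_{\tau_0^-}\in \d y)]=\bar\vv(\hat\zeta,x)^T\times\hat\aa\times\bar\vv(\hat\rho,-y)\,\d y$ for $x>0$. These two facts are what will make the ansatz \eqref{eq_dble_exit} natural: the discounted two-sided exit density must be a combination of the functions $\bar\vv(\zeta,a-x)$ (from ``exiting upward directly'') and $\bar\vv(\hat\zeta,x)$ (picked up after an excursion below $0$), integrated against $\bar\vv(\rho,y-a)$ on the upper boundary, with an analogous statement on the lower side.

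Next I would verify the ansatz is self-consistent by plugging it into the operator equations \eqref{eq_operator}, or more conveniently into the iterated form $\gg=\pp_{[0,a]}-\hat\pp_{[0,a]}\pp_{(-\infty,0]}+\gg\hat\pp_{[a,\infty)}\pp_{(-\infty,0]}$. The term $\pp_{[0,a]}$ applied to a test function supported on $[a,\infty)$ contributes the one-sided exit kernel, i.e. the matrix $\aa$ acting between $\bar\vv(\zeta,a-x)$ and $\bar\vv(\rho,y-a)$. The composition $\hat\pp_{[0,a]}\pp_{(-\infty,0]}$ sends the process first below $0$ --- producing $\bar\vv(\hat\zeta,x)^T$ on the left and a kernel in $\bar\vv(\hat\rho,\cdot)$ landing in $(-\infty,0]$ --- and then back up across $a$; composing these requires evaluating $\pp_{(-\infty,0]}$ on exponential functions $e^{\hat\rho_i\cdot}$ on the negative half-line, which by the one-sided exit result again produces a matrix with entries of the form $\hat\rho_i\zeta_j/(\hat\rho_i+\zeta_j)\,e^{-a\zeta_j}$ --- precisely the matrix $\bb=\bb(\hat\rho,\zeta,a)$ in \eqref{def_B}, together with the boundary row $\zeta_j e^{-a\zeta_j}$ coming from the atom / drift component $\coeffb_0$. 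Carrying out the bookkeeping, matching coefficients of $\bar\vv(\zeta,a-x)$ and of $\bar\vv(\hat\zeta,x)$ separately, yields exactly the coupled system \eqref{matrix_eqns} for $\cc_1,\hat\cc_2$ (and by the dual computation for $\hat\cc_1,\cc_2$). I would then invoke the uniqueness part of Theorem~\ref{solution_to_operator_eqns}: since $(\gg,\hat\gg)$ is the unique solution of \eqref{eq_operator} in $\hh$, the density representation of the form \eqref{eq_dble_exit} that solves \eqref{matrix_eqns} is the correct one, so it remains only to argue such a matrix solution exists.

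For existence and the iterative scheme, I would substitute one pair of equations into the other to get a fixed-point equation such as $\cc_1=\aa+\cc_1\hat\bb\hat\aa\bb\aa$, i.e. $\cc_1=\aa+\cc_1\mathbf{M}$ with $\mathbf{M}=\hat\bb\hat\aa\bb\aa$, and likewise for the other unknowns. The iteration $\cc_1^{(n+1)}=\aa+\cc_1^{(n)}\mathbf{M}$ converges in the norm $\|\cdot\|_\infty$ of \eqref{matrix_norm} provided $\|\mathbf{M}\|_\infty<1$. To see this, I would identify each of the four matrices $\aa,\hat\aa,\bb,\hat\bb$ with a bounded operator on $\ll_\infty$ represented in the basis $\{\bar\vv(\cdot,x)\}$ and note that the $\|\cdot\|_\infty$-norm of the matrix is dominated by (in fact, via positivity of all the coefficients $\coeffa_n,\coeffb_n$ from Lemma~\ref{analytical_lemma}, equals) the operator norm of the corresponding positive operator evaluated on the constant function $1$. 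Concretely, $\|\bb\bar\vv(\zeta,\cdot)\,\|$ at $x$ reproduces $\e_x[e^{-q\tau_a^+}]$ for negative $x$ and $\|\aa\,\|$ reproduces $\p_x$-probabilities of upward exit, so $\mathbf{M}$ corresponds to ``go below $0$, come back above $a$, go below $0$, come back above $a$'' discounted at rate $q$, whose operator norm is bounded by $\e_0[e^{-q\tau_a^+}]\cdot\e_0^{\text{(dual)}}[\cdots]<1$ exactly as in the proof of Theorem~\ref{solution_to_operator_eqns}. This gives $\eta:=\|\mathbf{M}\|_\infty^{1/2}<1$ (or one can use $\|\hat\bb\hat\aa\|_\infty\vee\|\bb\aa\|_\infty<1$ directly) and hence $\|\cc_1-\cc_1^{(n)}\|_\infty\le C\eta^{n}$, with the analogous bounds for $\cc_2,\hat\cc_1,\hat\cc_2$.

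The main obstacle I anticipate is the second step: correctly tracking, in the composition of the $\pp$ and $\hat\pp$ operators, how the atomic/drift terms $\coeffa_0(\rho,\zeta)$ and $\coeffb_0(\zeta,\rho)$ propagate --- these are responsible for the zeroth row/column of $\aa$ and $\bb$ and for the $\delta_0$ entry of $\bar\vv$ --- so that the bookkeeping produces the stated forms \eqref{def_A} and \eqref{def_B} with no stray terms. Everything else is a matter of applying Theorem~\ref{thm_one_sided_exit} (and its dual), the partial-fraction identities \eqref{part_fraction1}--\eqref{part_fraction2} to re-expand products of the basis functions, and the contraction estimates already used in Theorem~\ref{solution_to_operator_eqns}.
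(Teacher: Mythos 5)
Your overall route is the same as the paper's: represent $\pp_I$ and $\hat\pp_I$ as integral operators with kernels $\ind(x\in I,\,y\ge a)\,\bar\vv(\zeta,a-x)^T\times\aa\times\bar\vv(\rho,y-a)$ supplied by Theorem \ref{thm_one_sided_exit}, deduce from the series representation in Theorem \ref{solution_to_operator_eqns} that the kernels of $\gg,\hat\gg$ must have the stated form, obtain (\ref{matrix_eqns}) from the operator identities (\ref{eq_operator}) together with the composition identity $\int_0^\infty\bar\vv(\hat\rho,z)\times\bar\vv(\zeta,a+z)^T\,\d z=\bb$, and solve the decoupled fixed-point equation $\cc_1=\aa+\cc_1\hat\bb\hat\aa\bb\aa$ by iteration. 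This matches the paper step for step.

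The genuine gap is in your justification that $||\hat\bb\hat\aa\bb\aa||_\infty<1$ (equivalently, $||\bb\aa||_\infty<1$). You assert that, ``via positivity of all the coefficients $\coeffa_n,\coeffb_n$,'' the matrix norm (\ref{matrix_norm}) is dominated by (or equals) the operator norm of the corresponding positive operator evaluated on the constant function. But the entries of $\aa$ are \emph{not} all positive: $a_{i,j}=\coeffa_i(\rho,\zeta)\coeffb_j(\zeta,\rho)/(\rho_j-\zeta_i)$, and by the interlacing (\ref{interlacing_prop}) one has $\rho_j-\zeta_i<0$ whenever $j<i$. Consequently $\max_i\sum_j|a_{i,j}|$ is not a row sum of a positive kernel, and the probabilistic bound $\e_0[e^{-q\tau_a^+}]<1$ (which controls only the \emph{signed} row sums of the composed operator) does not by itself bound the norm (\ref{matrix_norm}). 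What rescues the argument --- and what the paper isolates as a separate technical lemma before the proof --- is that the entries of the \emph{product} $\mathbf{H}=\bb\aa$ are all positive. This requires its own identification: for instance, $h_{0,j}$ equals, up to the positive factor $\coeffb_j(\zeta,\rho)/\rho_j$, the value at $a$ of the probability density of $\xi=\ee(\rho_j)+\xsup_{\ee(q)}$, namely $\rho_j\sum_{k\ge1}\coeffa_k(\rho,\zeta)\,\zeta_k e^{-a\zeta_k}/(\rho_j-\zeta_k)$, and $h_{i,j}$ is similarly proportional to $\e[e^{-\hat\rho_i(\xi-a)}\ind(\xi>a)]$; positivity is not visible termwise. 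Only once this is established do the absolute row sums collapse to $\sum_{j\ge0}h_{i,j}=\e[e^{-q\tau_a^+}\,|\,X_0=-\eta_i]<1$ with $\eta_i$ exponential of rate $\hat\rho_i$, which is the bound you invoke. Without this positivity step your contraction estimate, and hence the existence of the matrices and the exponential convergence of the iteration, remains unproved.
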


Before proceeding to the proof of the above theorem, there is one technical lemma we must first address.

\begin{lemma}\label{lemma_matrix_norm}
 Let ${\mathbf H}=\bb \times \aa$. Then $||{\mathbf H}||_{\infty}<1$. %where 
\end{lemma}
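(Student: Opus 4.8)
The plan rests on one positivity fact: for every $c>0$ and $j\ge1$,
\[
\Sigma_j(c):=\sum_{k\ge1}\frac{\coeffa_k(\rho,\zeta)\,\zeta_k e^{-c\zeta_k}}{\rho_j-\zeta_k}>0 .
\]
Granting this, I will deduce that $\mathbf H:=\bb\times\aa$ has only nonnegative entries, so that $||\mathbf H||_\infty=\sup_{i\ge0}\sum_{j\ge0}H_{i,j}$ is an honest supremum of row sums; these row sums will then be identified with tails of $\xsup_{\ee(q)}$ and seen to be strictly below $1$ precisely because $a>0$.

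To prove $\Sigma_j(c)>0$, set $\phi(z):=\phiqp(\i z)=\prod_{n\ge1}(1+z/\rho_n)/(1+z/\zeta_n)$ and $\bar\phi(u):=\sum_{k\ge1}\coeffa_k(\rho,\zeta)\,\zeta_k e^{-\zeta_k u}$, so that by (\ref{see-the-atom}) $\phi(z)=\coeffa_0(\rho,\zeta)+\int_0^\infty\bar\phi(u)e^{-zu}\,\d u$. For $z>-\zeta_1$ the substitution $u=c+s$ in the Laplace integral of $\bar\phi(c+\cdot)$ yields
\[
\sum_{k\ge1}\frac{\coeffa_k(\rho,\zeta)\,\zeta_k e^{-c\zeta_k}}{\zeta_k+z}=e^{cz}\Big(\phi(z)-\coeffa_0(\rho,\zeta)-\int_0^c\bar\phi(u)e^{-zu}\,\d u\Big).
\]
The left-hand side converges locally uniformly off $\{-\zeta_k\}_{k\ge1}$ — since $\rho_k,\zeta_k\to\infty$, the coefficients $\coeffa_k(\rho,\zeta)\zeta_k e^{-c\zeta_k}$ decay super-exponentially when $c>0$ — hence it is meromorphic with poles only at the $-\zeta_k$, and so is the right-hand side; thus the identity persists on all of $\c$. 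Evaluating at $z=-\rho_j$, which by the interlacing (\ref{interlacing_prop}) is not one of the $-\zeta_k$, the left-hand side equals $-\Sigma_j(c)$ while $\phi(-\rho_j)=0$, so $\Sigma_j(c)=e^{-c\rho_j}\big(\coeffa_0(\rho,\zeta)+\int_0^c\bar\phi(u)e^{\rho_j u}\,\d u\big)>0$. Justifying this analytic-continuation step (that the partial-fraction series really is the continuation, hence may be evaluated at $-\rho_j$) is the one point that calls for a little care; everything else is routine.

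To conclude: since $b_{i,0}=0$ and $a_{0,j}=0$, $H_{i,j}=\sum_{k\ge1}b_{i,k}a_{k,j}$, which for $j=0$ equals $\coeffb_0(\zeta,\rho)\sum_{k\ge1}b_{i,k}\coeffa_k(\rho,\zeta)\ge0$ (Lemma \ref{analytical_lemma}), and for $j\ge1$, using (\ref{def_A}), (\ref{def_B}) and $\hat\rho_i/(\hat\rho_i+\zeta_k)=\int_0^\infty\hat\rho_i e^{-\hat\rho_i w}e^{-\zeta_k w}\,\d w$, equals $\coeffb_j(\zeta,\rho)\int_{[0,\infty)}\Sigma_j(a+w)\,\nu_i(\d w)\ge0$, where $\nu_i$ denotes the exponential law of rate $\hat\rho_i$ for $i\ge1$ and $\nu_0:=\delta_0$; thus $\mathbf H\ge0$. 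The same manipulation shows that for each $c>0$ the vector $\bar\vv(\zeta,c)^T\aa$ of coefficients of the positive finite measure $m_c(\d y):=\e[e^{-q\tau_c^+}\ind(X_{\tau_c^+}-c\in\d y)]$ of Theorem \ref{thm_one_sided_exit} has nonnegative entries, whence $\sum_{j\ge0}(\bar\vv(\zeta,c)^T\aa)_j=m_c([0,\infty))=\e[e^{-q\tau_c^+};\tau_c^+<\infty]=\p(\xsup_{\ee(q)}>c)$. Because row $i$ of $\mathbf H$ equals $\int_{[0,\infty)}\big(\bar\vv(\zeta,a+w)^T\aa\big)\,\nu_i(\d w)$, we obtain $\sum_{j\ge0}H_{i,j}=\int_{[0,\infty)}\p(\xsup_{\ee(q)}>a+w)\,\nu_i(\d w)\le\p(\xsup_{\ee(q)}>a)$, with equality when $i=0$; hence $||\mathbf H||_\infty=\p(\xsup_{\ee(q)}>a)$. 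Finally, by (\ref{part_fraction1}) at $z=0$ we have $\coeffa_0(\rho,\zeta)+\sum_{k\ge1}\coeffa_k(\rho,\zeta)=\phi(0)=1$, and from (\ref{dist_sup_X}) $\p(\xsup_{\ee(q)}>a)=\sum_{k\ge1}\coeffa_k(\rho,\zeta)e^{-a\zeta_k}$; since $a>0$ and each $\coeffa_k(\rho,\zeta)>0$, this is strictly smaller than $\sum_{k\ge1}\coeffa_k(\rho,\zeta)\le1$, giving $||\mathbf H||_\infty<1$.
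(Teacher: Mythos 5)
Your proof is correct, and the overall architecture coincides with the paper's: show the entries of $\mathbf H$ are nonnegative, then identify each row sum with a discounted first‑passage quantity $\e\big[e^{-q\tau_a^+}\,\big|\,X_0=-\eta_i\big]$ (your $\int\p(\xsup_{\ee(q)}>a+w)\,\nu_i(\d w)$ is the same object by spatial homogeneity), which is strictly less than $1$. Where you genuinely diverge is in the positivity step. The paper recognizes $\rho_j\Sigma_j(x)$ as the probability density of $\xi=\ee(\rho_j)+\xsup_{\ee(q)}$ (and $h_{i,j}$ as a multiple of $\e[e^{-\hat\rho_i(\xi-a)}\ind(\xi>a)]$), so positivity is immediate; you instead evaluate the partial‑fraction identity at the zero $z=-\rho_j$ of $\phiqp(\i z)$ and obtain the manifestly positive closed form $\Sigma_j(c)=e^{-c\rho_j}\big(\coeffa_0(\rho,\zeta)+\int_0^c\bar\phi(u)e^{\rho_j u}\,\d u\big)$. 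These two arguments are two faces of the same fact: the paper's "algebraic manipulations" producing the density of $\xi$ secretly use the identity $\coeffa_0(\rho,\zeta)=\sum_k\coeffa_k(\rho,\zeta)\zeta_k/(\rho_j-\zeta_k)$, which is exactly (\ref{part_fraction1}) evaluated at $z=-\rho_j$ — the very continuation step you single out. Your version is more self‑contained and makes the analytic issue explicit (and it does go through: both sides of your identity are meromorphic with poles confined to $\{-\zeta_k\}$, since $\int_0^c\bar\phi(u)e^{-zu}\,\d u$ is entire because $\bar\phi$ is integrable near $0$, and they agree on $(-\zeta_1,\infty)$); the paper's version is shorter and more probabilistic. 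One small inaccuracy worth fixing: the coefficients $\coeffa_k(\rho,\zeta)\zeta_k e^{-c\zeta_k}$ need not decay "super‑exponentially", as no growth rate for $\zeta_k$ is assumed; what you actually need, and what is true, is only that $\sum_k\coeffa_k(\rho,\zeta)\zeta_k e^{-c\zeta_k}<\infty$ for every $c>0$ (it is the density of $\xsup_{\ee(q)}$ at $c$), which already gives local uniform convergence of the series away from the poles via the Weierstrass test.
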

\begin{proof}
Using definitions (\ref{def_A}) and (\ref{def_B}) of $\aa$ and $\bb$ we find that for $i,j \ge 1$
 \beqq
 h_{0,0}&=&\coeffb_0(\zeta, \rho)\sum\limits_{k \ge 1} \coeffa_k(\rho,\zeta)  \zeta_k e^{-a \zeta_k} \\
 h_{0,j}&=&\coeffb_j(\zeta, \rho) \sum\limits_{k \ge 1} \coeffa_k(\rho,\zeta) \frac{\zeta_k e^{-a \zeta_k}}{\rho_j-\zeta_k} \\
 h_{i,0}&=&\hat \rho_i \coeffb_0(\zeta, \rho)  \sum\limits_{k \ge 1} \coeffa_k(\rho,\zeta) \frac{\zeta_k e^{-a \zeta_k}}{\zeta_k+\hat \rho_i} \\
 h_{i,j}&=&\hat \rho_i \coeffb_j(\zeta, \rho) \sum\limits_{k \ge 1} \coeffa_k(\rho,\zeta) \frac{\zeta_k e^{-a \zeta_k}}{(\rho_j-\zeta_k)(\zeta_k+\hat \rho_i)}.
 \eeqq
Our first goal is to prove that $h_{i,j}$ are positive for all $i,j\ge 0$ (note that $h_{i,0}>0$ for all $i
\ge 0$). 
Let us consider a random variable
$\xi=\ee(\rho_j)+\xsup_{\ee(q)}$, where as usual $\ee(\rho_j)$ is an exponentially distributed random variable with rate $\rho_j$, independent of $X$ and $\ee(q)$. The Laplace transform of 
$\xi$ is given by
 \beqq
 \e \left[ e^{-z \xi}\right]=\e \left[ e^{-z \ee(\rho_j)}\right]\times \e \left[ e^{-z \xsup_{\ee(q)}}\right]=  
  \frac{\rho_j}{\rho_j+z}\prod\limits_{n\ge 1}  \frac{1+\frac{z}{\rho_n}}{1+\frac{z}{\zeta_n}} ,
 \eeqq
and using the partial fraction decomposition (\ref{part_fraction1}), after some algebraic manipulations we find that $\xi$ has a probability density function
 \beqq
 \frac{\d}{\d x}\p(\xi \le x) = \rho_j \sum\limits_{k\ge 1} \coeffa_k(\rho,\zeta) \frac{ \zeta_k e^{-x \zeta_k } }{\rho_j-\zeta_k}.
 \eeqq
This proves that $h_{0,j}$ are all positive. To prove that $h_{i,j}$ are positive one can use identity
\beqq
 \e\left[ e^{-\hat \rho_i (\xi -a) }\ind(\xi > a) \right]=
\rho_j \sum\limits_{k\ge 1} \coeffa_k(\rho,\zeta) \frac{\zeta_k e^{-a \zeta_k } }{(\rho_j-\zeta_k)(\zeta_k+\hat \rho_i)}.
\eeqq

Next we need to prove that $||{\mathbf H}||_{\infty}<1$. Define a row vector $\bar {\textnormal h}_i$ as the $i$-th row of the matrix ${\mathbf H}$.
Using Theorem \ref{thm_one_sided_exit} we check that
 \beqq
  \e \left[ e^{-q \tau_a^+} \ind(X_{\tau_a^+} -a \in \d y) | X_0=-\eta_i \right] = \bar {\textnormal h}_i \times \bar \vv(\rho, y)\d y,
 \eeqq
where $\eta_0\equiv 0$ and $\eta_i $ is independent and exponentially distributed with parameter $\hat\rho_i$. 
Thus we have
\beqq
 \sum\limits_{j\ge 0} h_{i,j}=\e \left[ e^{-q \tau_a^+} | X_0=-\eta_i \right]<\e \left[ e^{-q \tau_a^+} | X_0=0 \right]<1,
\eeqq
and using the already established fact that $h_{i,j}$ are positive we conclude that $||{\mathbf H}||_{\infty}<1$.
\end{proof}

{\it Proof of Theorem \ref{thm_two_sided}.}  The 
operator $\pp_I$ is an integral operator, with  kernel
\beqq
\ind ( x \in I) \e_x \left[ e^{-q \tau_a^+}  \ind (X_{\tau_a^+} \in \d y)\right]=
\ind ( x \in I, y \ge a )\bar \vv(\zeta,a-x)^T\times \aa \times \bar \vv(\rho,y-a) \d y,
\eeqq
(see Theorem \ref{thm_one_sided_exit}). We also have a similar formula for $\hat \pp_I$. Using the infinite series representation
 (\ref{inf_srs_sltn_to_G_hat_G}) we find that there exist some matrices of coefficients $\cc_1$, $\cc_2$ and $\hat \cc_1$, $\hat \cc_2$ so that
  integral kernels of operators $\gg$ and $\hat \gg$ can be represented in the form (\ref{eq_dble_exit}). 
Matrix equations   (\ref{matrix_eqns}) follow from operator 
identities (\ref{eq_operator}) and the fact that
\beqq
\int\limits_0^{\infty} \bar \vv(\hat \rho, z) \times  \bar\vv(\zeta,a+z)^T \d z = \bb.
\eeqq
Equations (\ref{matrix_eqns}) can be solved iteratively as follows: 
 \beqq
 \cc_1^{(n+1)}&=&\aa+ \cc_1^{(n)} \hat \bb \hat \aa  \bb \aa, \;\;\; \cc_1^{(0)}={\bf 0} \\
 \hat \cc_1^{(n+1)}&=&\hat \aa+ \hat \cc_1^{(n)}   \bb \aa \hat \bb \hat \aa, \;\;\; \hat\cc_1^{(0)}={\bf 0},
 \eeqq
 and the convergence  $\cc_1^{(n)} \to \cc_1$ and $\hat \cc_1^{(n)} \to \hat \cc_1$ is exponential because of Lemma \ref{lemma_matrix_norm}. 
 Once we find $\cc_1$ and $\hat \cc_1$ we find $ \cc_2 = -  \hat \cc_1 \bb \aa$  and $ \hat \cc_2  = -  \cc_1 \hat \bb \hat \aa $.
\qed

\begin{remark}
Assume that within the $M$-class the positive (negative) jumps come from a mixture of $N<\infty$ ($\hat N<\infty$) exponential distributions. Then  matrices $\bb$ and $\hat \bb$ 
have size $(\hat N+1) \times (N+1)$ and $(N+1) \times (\hat N+1)$, while matrices $\cc_1$ and $\hat \cc_1$ have sizes $(N+1) \times (N+1)$  and $(\hat N+1) \times  (\hat N+1)$ and can be found by the relations
 \beqq
 \cc_1=\aa \left( \ii-\hat \bb \hat \aa \bb \aa \right)^{-1}, \;\;\; \hat \cc_1=\hat \aa \left( \ii-\bb \aa \hat \bb \hat \aa \right)^{-1}.
\eeqq
\end{remark}

\bigskip

The idea of writing down a pair of simultaneous equations (\ref{def_G}) also works when considering the problem of first entrance {\it into} an interval. This problem has been considered earlier by \cite{Rogozin} for the setting of stable processes and \cite{KV07} for two sided L\'evy processes whose upwards jumps are exponentially distributed.

\begin{theorem}[First entrance into a finite interval]\label{thm_interval_entrance} 
Let $a>0$ and define $\tau=\inf\{ t\ge 0: X_t \in (0,a)\}$. Define a matrix ${\bf M}=\bb(\hat \zeta,\rho,a)^T$ 
and similarly $\hat {\bf M}=\bb(\zeta,\hat \rho,a)^T$ (see (\ref{def_B})).
 There exist matrices ${\bf N}_1$, ${\bf N}_2$ and $\hat {\bf N}_1$, $\hat {\bf N}_2$ such that for  $y \in (0,a)$  we have
 \beq\label{eq_dble_entrance}
  \e_{x} \left[ e^{-q \tau} \ind \left(X_{\tau} \in \d y \right)\right]=
 \begin{cases}
 \bar \vv(\zeta, -x)^T \times  \left[ {\bf N}_1 \times \bar \vv(\rho,y) + {\bf N}_2 \times \bar \vv(\hat \rho,a-y) \right]  \times\d y,   &x \leq 0  \vspace{0.1cm} \\
 \bar  \vv(\hat \zeta, x-a)^T \times  \left[ \hat {\bf N}_1 \times \bar \vv(\hat \rho,a-y) + \hat {\bf N}_2 \times \bar \vv(\rho,y) \right]  \times\d y, &x \geq a.
\end{cases}
 \eeq
These matrices satisfy the following system of linear equations
 \beq\label{matrix_eqns2}
\begin{cases}
 {\bf N}_1&=\aa+\aa {\bf M} \hat {\bf N}_2 \\
 \hat {\bf N}_2  &=   \hat \aa \hat {\bf M}  {\bf N}_1
\end{cases} \qquad
 \begin{cases}
 \hat {\bf N}_1&=\hat \aa+\hat \aa \hat {\bf M} {\bf N}_2 \\
  {\bf N}_2  &=   \aa {\bf M} \hat {\bf N}_1,
\end{cases}
 \eeq
where $\aa$ and $\hat \aa$ are defined by (\ref{def_A}). This system of linear equations (\ref{matrix_eqns2}) can be solved iteratively with exponential convergence with respect to the matrix norm $||\cdot||_\infty$
\end{theorem}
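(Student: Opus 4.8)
The plan is to run, in the entrance setting, the same two–step programme used for the exit problem in Theorems \ref{solution_to_operator_eqns} and \ref{thm_two_sided}. Write $\tau=\inf\{t\ge 0:X_t\in(0,a)\}$. When $x\le 0$ I would condition on $\tau_0^+=\inf\{t>0:X_t>0\}$: on $\{X_{\tau_0^+}\in(0,a)\}$ the process has entered the interval and $\tau=\tau_0^+$, while on $\{X_{\tau_0^+}\ge a\}$ it has overshot the whole interval and, by the strong Markov property, the entrance problem restarts afresh from the point $X_{\tau_0^+}\in[a,\infty)$; symmetrically, when $x\ge a$ one conditions on $\tau_a^-=\inf\{t>0:X_t<a\}$. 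This yields a coupled pair of operator identities for the two entrance kernels (from below and from above) in terms of the ``first passage and overshoot'' kernels, and, just as in Theorem \ref{solution_to_operator_eqns}, the pair is the unique bounded solution and is obtained by iteration. The iteration contracts geometrically because the operator that discounts and restricts first passage above $0$ from below to the event $\{X_{\tau_0^+}\ge a\}$ has norm at most $\e_0[e^{-q\tau_a^+}]<1$ — on that event $\tau_0^+=\tau_a^+$, since $a>0$ — and symmetrically for its mirror image from above, whose norm is at most $\e_a[e^{-q\tau_0^-}]<1$; uniqueness follows the same way.

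To make the formulae explicit for the $M$-class I would then insert the building blocks of Theorem \ref{thm_one_sided_exit}. By spatial homogeneity, for $x\le 0$ the $e^{-q\tau_0^+}$-discounted sub-law of $X_{\tau_0^+}$ on $(0,\infty)$ has density $\bar\vv(\zeta,-x)^T\times\aa\times\bar\vv(\rho,y)$ in $y>0$; and, applying Theorem \ref{thm_one_sided_exit} to $-X$, for $x\ge a$ the $e^{-q\tau_a^-}$-discounted sub-law of $X_{\tau_a^-}$ on $(-\infty,a)$ has density $\bar\vv(\hat\zeta,x-a)^T\times\hat\aa\times\bar\vv(\hat\rho,a-y)$ in $y<a$. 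The portions landing inside $(0,a)$ produce the ``direct'' contributions $\aa$ and $\hat\aa$; the portions landing in $[a,\infty)$, respectively $(-\infty,0]$, are composed with the restarted entrance kernel, and the composition collapses by means of the identities $\int_0^\infty\bar\vv(\rho,a+z)\,\bar\vv(\hat\zeta,z)^T\,\d z={\bf M}$ and $\int_0^\infty\bar\vv(\hat\rho,a+z)\,\bar\vv(\zeta,z)^T\,\d z=\hat{\bf M}$, which are just the transposes of the identity $\int_0^\infty\bar\vv(\hat\rho,z)\bar\vv(\zeta,a+z)^T\,\d z=\bb$ established inside the proof of Theorem \ref{thm_two_sided}. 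Collecting the coefficients of $\bar\vv(\rho,y)$ and of $\bar\vv(\hat\rho,a-y)$ in the resulting series then reproduces exactly the representation (\ref{eq_dble_entrance}) and the linear system (\ref{matrix_eqns2}).

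The step I expect to be the main obstacle is the final clause, that (\ref{matrix_eqns2}) can be solved iteratively with geometric convergence in $||\cdot||_\infty$. The natural route is to inherit this from the operator contraction above: the matrix iteration is nothing but the $M$-class realisation of the geometric operator series, so it remains to check that the composed kernels appearing in the iteration — products formed from $\aa$, $\hat\aa$, ${\bf M}$ and $\hat{\bf M}$ — have non-negative entries, so that the geometric rate of the operator norm is transmitted to $||\cdot||_\infty$. As in the proof of Lemma \ref{lemma_matrix_norm}, this positivity would be obtained by recognising each such kernel as the discounted sub-law of an explicit overshoot (of a random variable of the form $\ee(\cdot)+\xsup_{\ee(q)}$, say, computed via the partial fraction expansion of Lemma \ref{analytical_lemma}). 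Finally, a little care is needed at the endpoints $x=0$ and $x=a$ and with the creeping contributions, which are carried by the $\delta_0$-entries of the $\bar\vv$-vectors and are handled exactly as in the earlier results.
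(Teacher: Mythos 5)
Your proposal is correct and follows essentially the route the paper intends: the paper itself omits the argument, stating only that it is "very similar to the proof of Theorem \ref{thm_two_sided}", and your write-up is precisely that adaptation — the Rogozin-style operator system obtained by conditioning on $\tau_0^+$ (resp.\ $\tau_a^-$), the insertion of the one-sided exit kernels of Theorem \ref{thm_one_sided_exit}, the collapse of the compositions via $\int_0^\infty \bar\vv(\rho,a+z)\bar\vv(\hat\zeta,z)^T\,\d z={\bf M}$, and the contraction/positivity argument mirroring Lemma \ref{lemma_matrix_norm}. No gaps worth flagging beyond the endpoint/creeping caveats you already note.
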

The proof of this Theorem is very similar to the proof of Theorem \ref{thm_two_sided}, and we leave  the details to the reader.

\section{Ladder processes}\label{ladder}

In this Section we derive several results related to the Laplace exponent $\kappa(q,z)$ of the bivariate ladder process $(L,H)$. We are interested in 
numerical evaluation of this object since it is the key to many important fluctuation identities, such as the quintuple law at the first passage 
which was introduced in \cite{Doney}. 

 Everywhere in this section we will denote the characteristic exponent of the L\'evy process $X$ 
by $\Psi(z)=-\ln \e [\exp(\i z X_1)]$. Note, that the Laplace exponent $\psi(z)$ can be expressed in terms of the characteristic exponent 
as $\psi(z)=-\Psi(-\i z)$, this fact  easily follows from (\ref{eq1}). 

The following Theorem, which holds for general L\'evy processes, is a generalization of the expression for the Wiener-Hopf factors which can be found in Lemma 4.2 in \cite{Mordecki}.
\begin{theorem}\label{thm_Darling} Assume that  for some $\epsilon_1>0$ and for all $|z| \in \r$ large enough we have
 \beq\label{assumptn_on_Psi0}
 |\Psi(z)| > c |z|^{\epsilon_1},
 \eeq
 for some $c>0$,
 and 
\beq\label{assmptn_on_Psi}
 \int\limits_{-\epsilon_2}^{\epsilon_2} \bigg | \frac{\Psi(z)}{z} \bigg | \d z,
\eeq
 exists for some (and then for all) $\epsilon_2>0$. 
 Then for $\re(q)>0$ and $\re(z)>0$ we have
 \beq\label{kappa_int_formula}
 \kappa(q,z)=\exp\left[\frac{1}{2\pi \i} \int\limits_{\r} 
 \left(\frac{\ln(q+\Psi(u))}{u-\i z}-\frac{\ln(1+\Psi(u))}{u} \right) \d u  \right].
 \eeq
\end{theorem}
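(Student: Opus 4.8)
The plan is to derive the formula as an instance of a Wiener--Hopf-type factorization of $q+\Psi(z)$ realized through the Cauchy/Plemelj machinery. The starting point is the classical identity relating the bivariate ladder exponents to the characteristic exponent: for $\re(q)>0$ there is a constant $k>0$ with
\[
\kappa(q,-\i z)\,\hat\kappa(q,\i z) = k\,(q+\Psi(z)), \qquad z\in\r,
\]
together with the fact that $z\mapsto\kappa(q,z)$ is analytic and zero-free in $\re(z)>0$ (and continuous up to the imaginary axis), while $\hat\kappa(q,\i z)$ enjoys the mirror property in the lower half-plane. Hence $\ln\kappa(q,-\i z)$ should be the ``$+$''-part of $\ln(q+\Psi(z))$ in a Cauchy-integral decomposition along $\r$. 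The growth hypothesis $|\Psi(z)|>c|z|^{\epsilon_1}$ guarantees that $\ln(q+\Psi(u))$ grows only logarithmically at infinity, so that the Cauchy integrals below converge after the subtraction of the $q=1$ term (which is the role of the second summand in (\ref{kappa_int_formula}) — it is a convergence-forcing normalization that is independent of $z$), and the local integrability hypothesis (\ref{assmptn_on_Psi}) controls any logarithmic singularity of $\Psi(u)/u$ near $u=0$ so that $\ln(q+\Psi(u))/u$ and $\ln(1+\Psi(u))/u$ are integrable near the origin.

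The key steps, in order: (1) Take logarithms in the factorization above, writing $\ln(q+\Psi(u)) = \ln\kappa(q,-\i u) + \ln\hat\kappa(q,\i u) + \ln k$ for $u\in\r$. (2) Apply the Cauchy integral / Sokhotski--Plemelj formula: for a function $F(u)$ on $\r$ of at most logarithmic growth, $F$ splits as $F=F_++F_-$ where $F_\pm$ extend analytically to the upper/lower half-planes and are recovered by $F_+(z)=\frac{1}{2\pi\i}\int_\r F(u)/(u-z)\,\d u$ for $\im(z)>0$, up to an additive constant fixed by the behaviour at infinity. Applying this with $F(u)=\ln(q+\Psi(u))$ and identifying the ``$+$'' piece with $\ln\kappa(q,-\i z)$ (analytic and bounded-ish in the appropriate half-plane, with value determined at infinity) yields
\[
\ln\kappa(q,-\i z) = \frac{1}{2\pi\i}\int_\r \frac{\ln(q+\Psi(u))}{u-z}\,\d u + \text{const}.
\]
(3) Substitute $z\mapsto\i z$ (so $\re(z)>0$) to match the statement, getting the first term $\ln(q+\Psi(u))/(u-\i z)$. (4) Fix the additive constant by subtracting the same expression evaluated at $q=1$; since $\kappa(1,\cdot)$ is a perfectly good ladder exponent the constants cancel in a way that produces exactly the term $-\ln(1+\Psi(u))/u$ inside the integral — here one checks that $\ln\kappa(1,-\i z)$ contributes a piece that, combined with the normalization $\kappa(q,0^+)$ and the known value $\kappa(1,\infty)$, leaves no residual constant. (5) Finally one should justify exponentiating, i.e. that the right-hand side is single-valued (the branch of $\ln(q+\Psi(u))$ is unambiguous because $q+\Psi(u)$ stays in a half-plane / omits a ray for $\re(q)>0$, as $\re(\Psi(u))\ge 0$).

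The main obstacle I expect is step (2)--(4): namely making the Cauchy-type decomposition rigorous given only logarithmic growth rather than decay. This requires a careful contour argument — deforming a large semicircle, showing the arc contribution vanishes after the subtraction of the $q=1$ term, and pinning down that the analytic ``$+$'' piece really coincides with $\ln\kappa(q,\cdot)$ and not just with it up to an entire function (one uses that $\kappa$ has no zeros and sub-polynomial growth, e.g. via the inequality $|\kappa(q,z)|\le$ const$\cdot(q+|z|)$ for a driftless-dominated part, together with the lower bound (\ref{assumptn_on_Psi0}) which forces $\ln(q+\Psi)$ to grow like $\log|u|$). The role of hypothesis (\ref{assumptn_on_Psi0}) is precisely to rule out pathological behaviour of $\Psi$ along the real line that would break the arc estimate, while (\ref{assmptn_on_Psi}) handles the origin; once these two estimates are in hand the argument is the standard one (it mirrors Lemma 4.2 in \cite{Mordecki}), so I would present the contour/Plemelj computation in moderate detail and relegate the bookkeeping of the constant to a short verification.
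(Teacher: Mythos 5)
Your outline is a genuinely different route from the paper's. You propose the analytic Wiener--Hopf argument: factor $q+\Psi(z)=\kappa(q,-\i z)\hat\kappa(q,\i z)$, take logarithms, and recover the ``$+$''-part by a Cauchy/Plemelj decomposition along $\r$. The paper instead starts from the Fristedt-type representation $\kappa(q,z)=\exp\bigl[\int_{\r^+}\int_{\r^+}(e^{-t}-e^{-qt-zx})t^{-1}\p(X_t\in \d x)\,\d t\bigr]$ (Theorem 6.16 of \cite{Kyprianou}), regularizes by inserting a factor $e^{-\gamma x}$ and a nonzero Gaussian component so that $\p(X_t\in\d x)$ has a density obtained by Fourier inversion of $e^{-t\Psi}$, and then evaluates the double integral explicitly: Fubini, the elementary $x$-integral, and the Frullani integral in $t$ produce exactly the integrand of (\ref{kappa_int_formula}) with $u-\i\gamma$ in place of $u$ in the second term. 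In that computation the hypotheses (\ref{assumptn_on_Psi0}) and (\ref{assmptn_on_Psi}) serve a single purpose: they furnish an integrable dominating function of the form $\tilde C\,\ln(1+|\Psi(u)|)\,|u|^{-1}(1+|u|)^{-\epsilon_1}$, uniformly in $\gamma$ and in the Gaussian coefficient, so the regularizations are removed by dominated convergence. What the paper's route buys is that the normalization of local time ($\kappa(1,0)=1$) and the absence of any residual additive constant are built in from the start, and no uniqueness or Liouville argument is ever needed; what your route buys is structural transparency and independence from the existence of a transition density.

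The one step of your plan I would insist you write out rather than defer is the constant in step (4), since it is not purely bookkeeping. Identifying the ``$+$''-piece with $\ln\kappa(q,-\i z)$ up to a constant does go through by Liouville, using $|\kappa(q,z)|\le C(1+|z|)$ together with $\re\,\kappa(q,z)\ge\kappa(q,0)>0$ on $\re(z)\ge 0$ (from $\re(1-e^{-zx})\ge0$), so that $\ln\kappa=O(\log|z|)$. But showing that this constant equals $-\frac{1}{2\pi\i}\int_\r\ln(1+\Psi(u))u^{-1}\d u$ (interpreted jointly with the first term) \emph{uniformly in} $q$ requires matching the ``$+$''-parts of $\ln(q+\Psi)$ and $\ln(1+\Psi)$ at infinity in the right half-plane, and when the ascending ladder height process has no drift the ratio $\kappa(q,z)/\kappa(1,z)$ does not tend to $1$ as $z\to\infty$, so the matching is not automatic; it must be extracted from the known value $\kappa(1,0)=1$ and a limit such as $z\downarrow 0$ in the combined integral. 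Until that is done the argument determines $\kappa(q,\cdot)$ only up to a $q$-dependent multiplicative constant.
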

\begin{proof}
We start with the following integral representation for $\kappa(q,z)$  (see Theorem 6.16 in \cite{Kyprianou})
 \beqq
 \kappa(q,z)= \exp\left[ \;\int\limits_{\r^+} \int\limits_{\r^+} 
 \left(e^{-t}-e^{-q t-z x} \right) \frac{1}{t} \p(X_t \in \d x) \d t \right].
 \eeqq
Assume that the process $X_t$ has a non-zero Gaussian component ($\sigma \ne 0$), then $\p(X_t \in \d x)$ 
 has a density which can be obtained as the inverse
Fourier transform of the right hand side of (\ref{eq1}). Using this fact and assuming that $\gamma>0$ we obtain
 \beq\label{proof_Darling1} \nonumber 
 &&\int\limits_{\r^+} \int\limits_{\r^+} 
 \left(e^{-t-\gamma x}-e^{-q t-z x} \right) \frac{1}{t}\p(X_t \in \d x)  \d t \\ 
  &=&
\frac{1}{2\pi} \int\limits_{\r^+} \int\limits_{\r^+} 
 \left(e^{-t-\gamma x}-e^{-q t-z x} \right) \frac{1}{t}  \int\limits_{\r} e^{-t\Psi(u)-\i u x} \d u \d x \d t.
 \eeq
Applying Fubini Theorem and performing integration in  $x$  we find that the above integral is equal to 
\beq\label{proof_Darling2}
  \lefteqn{
\frac{1}{2\pi} \int\limits_{\r^{\plus}} \frac{\d t}{t} \int\limits_{\r}  \d u
  \left(\frac{e^{-t(1+\Psi(u))}}{\gamma+\i u} -\frac{e^{-t (q+\Psi(u))}}{z+\i u} \right)}&&  \nonumber\\ 
&=& 
 \frac{1}{2\pi} \int\limits_{\r^{\plus}}  \frac{\d t}{t}  \int\limits_{\r}  \d u
 \left(\frac{1}{\gamma+\i u}(e^{-t(1+\Psi(u))}-e^{-t}) -\frac{1}{z+\i u}(e^{-t (q+\Psi(u))}-e^{-t}) \right) ,
\eeq
where in the last step we have used the fact that  for $\gamma >0$ and $\re(z)>0$
 \beqq
 \int\limits_{\r} \left[\frac{1}{\gamma+\i u}-\frac{1}{z+\i u}\right] \d u=0 ,
 \eeqq
(which can be easily verified by residue calculus). Next, we apply Fubini Theorem and Frullani integral to the last integral
 in (\ref{proof_Darling2}) and combining this result with (\ref{proof_Darling1}) we obtain
\begin{equation}
\int\limits_{\r^+} \int\limits_{\r^+} 
 \left(e^{-t-\gamma x}-e^{-q t-z x} \right) \frac{1}{t}\p(X_t \in \d x)  \d t= \frac{1}{2\pi \i} \int\limits_{\r} 
 \left(\frac{\ln(q+\Psi(u))}{u-\i z}-\frac{\ln(1+\Psi(u))}{u-\i \gamma} \right) \d u .
\label{proof_Darling3}
 \end{equation}
Next we need to take the limit of the above identity as $\gamma \to 0^+$. For $u$ small the integrand in the right hand side of 
(\ref{proof_Darling3}) is bounded by
\beqq
C  \frac{\ln(1+|\Psi(u)|}{|u|},
\eeqq
uniformly in $\gamma \in [0,\tilde \gamma]$ for each $\tilde\gamma>0$, and for $u$ large it can be bounded by
\beqq
\bigg| \frac{\ln(q+\Psi(u))}{u-\i z}-\frac{\ln(1+\Psi(u))}{u-\i \gamma} \bigg| &=& \bigg| \i (z-\gamma) \frac{\ln(q+\Psi(u))}{(u-\i \gamma)(u-\i z)}-\frac{\ln\left(1+\frac{1-q}{q+\Psi(u)}\right)}{u-\i \gamma}\bigg|\\&=&O(\ln(|\Psi(u)|) u^{-2})+O((u\Psi(u))^{-1}),
\eeqq
again uniformly in $\gamma \in [0,\tilde \gamma]$. Using the above two estimates and assumption (\ref{assumptn_on_Psi0}) we conclude that the integrand in the right hand side of 
(\ref{proof_Darling3}) is bounded by
\beqq
 \tilde C  \frac{\ln(1+|\Psi(u)|)}{|u|} (1+|u|)^{-\epsilon_1},
\eeqq 
for some $\tilde C>0$ uniformly in $\gamma \in [0,\tilde \gamma]$, and the above function is integrable on $\r$ due to assumption 
(\ref{assmptn_on_Psi}). Thus we can apply the Dominated Convergence Theorem on the right hand side of (\ref{proof_Darling3}) together with the Monotone Convergence Theorem on the left hand side whilst taking limits as $\gamma \to 0^+$. To finish the proof we only 
need to take the limit $\sigma \to 0^+$, which can be justified in exactly the same way using the Dominated Convergence Theorem on the right hand side of (\ref{proof_Darling3}) and weak convergence on the right hand side of (\ref{proof_Darling3}). Note in particular that the characteristic exponent of $X$ is continuous in the Gaussian coefficient and hence, by the Continuity Theorem for Fourier transforms, the law of $X$ is weakly continuous in the Gaussian coefficient.
\end{proof}

Assumption (\ref{assmptn_on_Psi}) is a very mild one. It is satisfied by all processes except those which have unusually heavy tails 
of the L\'evy measure. It is satisfied by all processes in $M$-class, and more generally, by all processes for which there exists an $\epsilon>0$ such that $\Pi( \r \setminus (-x,x) )=O(x^{-\epsilon})$ as $x\to +\infty$ (for example, all stable processes have this property). While condition (\ref{assumptn_on_Psi0}) is more restrictive, one can see that it only excludes compound Poisson processes and some processes of bounded variation 
which are equal to the sum of its jumps. One example of such a process is a pure jump Variance Gamma process with no linear drift 
 $X_t=\sigma W_{\Gamma_t}+\mu \Gamma_t$, which satisfies  $|\Psi(z)| \sim c \ln(|z|)$ as $z\ \to \infty$. 
 One can see that condition (\ref{assumptn_on_Psi0}) is satisfied for the processes of bounded variation with non-zero drift, processes 
with non-zero Gaussian component and all processes with the L\'evy measure satisfying $\Pi(\r \setminus (-x,x)) > x^{-\epsilon}$ as $x\to 0^+$ for some $\epsilon >0$. In particular this condition is satisfied for all processes in the  $\beta$-, $\theta$-,  hypergeometric or hyperexponential family excluding compound Poisson processes.

Theorem \ref{thm_Darling} allows us to derive an expression for the  Laplace exponent of the bivariate ladder process $(L,H)$. 

\begin{theorem}\label{thm_kappa} Assume that $X$ belongs to $\beta$-, $\theta$-, hyper-exponential or hypergeometric family of Levy processes and that $X$ is not 
a compound Poisson process. Then
 \beq\label{eq_kappa_phiqp}
 \kappa(q,z)= \left[ \phiqp(\i z)\right]^{-1} \prod\limits_{n\ge 1} \frac{\zeta_n(q)}{\zeta_n(1)}.
 \eeq
\end{theorem}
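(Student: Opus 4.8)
The goal is to identify $\kappa(q,z)$, whose defining integral representation we have from Theorem \ref{thm_Darling}, with the explicit product $[\phiqp(\i z)]^{-1}\prod_{n\ge 1}\zeta_n(q)/\zeta_n(1)$. The plan is to start from formula (\ref{kappa_int_formula}) and rewrite the $q$-dependent part of the integrand relative to a reference value; the natural reference is $q=1$, which is exactly the normalization appearing on the right-hand side of (\ref{eq_kappa_phiqp}). Concretely, I would write
\[
\ln \kappa(q,z) - \ln \kappa(1,z) = \frac{1}{2\pi\i}\int_{\r}\frac{\ln(q+\Psi(u))-\ln(1+\Psi(u))}{u-\i z}\,\d u,
\]
since the second term in the integrand of (\ref{kappa_int_formula}) (the one with denominator $u$) is independent of $q$ and cancels. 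Thus it suffices to evaluate this single contour integral and to identify $\kappa(1,z)$ separately, or — more slickly — to show directly that the right-hand side of (\ref{eq_kappa_phiqp}) satisfies the same integral identity together with the correct $q=1$ normalization.

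The key computational step is a residue/contour-integration argument. Recall $\Psi(u) = -\psi(-\i u)$, so $q+\Psi(u) = q - \psi(-\i u)$, and by the factorization (\ref{WH_fact_1}) the function $u\mapsto q+\Psi(u)$ is meromorphic with zeros at $u = -\i\zeta_n(q)$ and $u=\i\hat\zeta_n(q)$ and poles at $u=-\i\rho_n$, $u=\i\hat\rho_n$. Writing $\ln(q+\Psi(u)) - \ln(1+\Psi(u))$ as a sum of logarithmic terms $\ln\bigl((1 - \i u/\zeta_n(q))/(1-\i u/\zeta_n(1))\bigr)$ over the positive roots and similarly over the negative roots (the pole factors cancel between numerator and denominator, since the poles $\rho_n,\hat\rho_n$ do not depend on $q$), I would close the contour in the upper half-plane where $\i u/\zeta_n(q)$ has the right sign, or equivalently integrate $\ln(1-\i u/\zeta) \cdot (u-\i z)^{-1}$ term by term. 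Each such term contributes, by deforming the contour around the branch cut of the logarithm (running from $u=-\i\zeta_n$ downwards), a factor $\ln(1+z/\zeta_n)$; the terms from the negative roots $\hat\zeta_n(q)$ produce branch cuts in the upper half-plane that do not enclose $\i z$ (for $\re(z)>0$ we have $\i z$ in the left half-plane-ish region... more carefully, one checks the cut lies away from the evaluation point) and hence contribute constants independent of $z$, namely $\ln(\hat\zeta_n(q)/\hat\zeta_n(1))$ after matching normalizations — but these constants must cancel against the $z$-independent discrepancy, so in fact the net effect is
\[
\ln\kappa(q,z) - \ln\kappa(1,z) = \sum_{n\ge 1}\ln\frac{1+z/\zeta_n(q)}{1+z/\zeta_n(1)}.
\]
Exponentiating and using the product expression $\phiqp(\i z)=\prod_n (1+z/\rho_n)/(1+z/\zeta_n(q))$ from (\ref{eqn_WH_factor}) gives, after the ratios of $\rho_n$-factors cancel, precisely $\kappa(q,z)=\kappa(1,z)\,[\phiqp(\i z)]^{-1}[\phi_1^+(\i z)]$, and one then only needs $\kappa(1,z) = [\phi_1^+(\i z)]^{-1}\prod_n \zeta_n(1)/\zeta_n(1)$... which is circular, so the cleaner route is: since $\kappa(q,z)/\kappa(q',z)$ is pinned down, and since $\lim_{z\to\infty}\phiqp(\i z)\prod_n(\zeta_n(q)/\zeta_n(1))\cdot\kappa(q,z)$ can be checked to be $q$-independent via the known asymptotics $\kappa(q,z)\sim$ (drift)$\,z$ or $\sim\sqrt{z}$-type behavior, one fixes the multiplicative constant.

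I expect the main obstacle to be bookkeeping the branch cuts and convergence of the term-by-term contour integration: the sums $\sum_n \ln((1+z/\zeta_n(q))/(1+z/\zeta_n(1)))$ converge only because $\zeta_n(q)-\zeta_n(1) = O(\rho_n - \zeta_n)$ decays fast enough (the interlacing (\ref{interlacing_prop}) squeezes both $\zeta_n(q)$ and $\zeta_n(1)$ between $\rho_{n-1}$ and $\rho_n$), so the individual divergent pieces must be paired correctly before taking residues — one cannot naively split $\ln(q+\Psi(u))$ into its constituent logarithms since each such series diverges. This is exactly the role of Lemma \ref{analytical_lemma}: the partial-fraction / convergence machinery there guarantees that the regrouped product $\prod_n(1+z/\zeta_n(q))/(1+z/\rho_n)$ converges, and one should work with that grouping throughout rather than separating roots from poles. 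A secondary, more routine point is justifying the interchange of sum and integral and the contour deformation, which follows from the decay estimates already proved in Theorem \ref{thm_Darling} (assumptions (\ref{assumptn_on_Psi0})–(\ref{assmptn_on_Psi}), valid for all the listed families of non-compound-Poisson processes). The final identification of the prefactor $\prod_n \zeta_n(q)/\zeta_n(1)$ as opposed to some other constant comes from matching behavior as $z\to 0^+$: both sides equal $\kappa(q,0)$ there, and $\phiqp(0)=1$, forcing the constant to be $\kappa(q,0)/\kappa(1,0)$, which one then recognizes — via $\kappa(q,0)^2 \propto$ the value of $q-\psi$ at a suitable point, or directly from (\ref{WH_fact_1}) evaluated at $z=0$ — to equal $\prod_n\zeta_n(q)/\zeta_n(1)$ times the matching $\hat\zeta$-product, with the $\hat\zeta$-product absorbed into $\kappa(1,0)$ consistently.
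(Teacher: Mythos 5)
Your starting point — the integral representation (\ref{kappa_int_formula}) from Theorem \ref{thm_Darling}, the factorization of $q+\Psi(u)$ over its roots and poles, and a term-by-term contour evaluation — is exactly the paper's mechanism, and your remark that roots must be grouped with poles (or with each other) before integrating term by term is on target. But the residue computation itself is carried out incorrectly. For $\im(a)<0$, $\im(b)<0$ and $\re(z)>0$ one has
\[
\frac{1}{2\pi\i}\int\limits_{\r}\frac{1}{u-\i z}\,\ln\frac{a-u}{b-u}\,\d u \;=\; \ln\frac{a-\i z}{b-\i z},
\]
so with $a=-\i\zeta_n(q)$, $b=-\i\zeta_n(1)$ the $n$-th term contributes $\ln\frac{\zeta_n(q)+z}{\zeta_n(1)+z}=\ln\frac{1+z/\zeta_n(q)}{1+z/\zeta_n(1)}+\ln\frac{\zeta_n(q)}{\zeta_n(1)}$, while the $\hat\zeta$-terms contribute exactly zero (close in the lower half-plane), not $\ln(\hat\zeta_n(q)/\hat\zeta_n(1))$. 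The constants $\ln(\zeta_n(q)/\zeta_n(1))$ do \emph{not} cancel: their sum is precisely the prefactor $\prod_n\zeta_n(q)/\zeta_n(1)$ asserted in (\ref{eq_kappa_phiqp}). By discarding them you lose the very constant the theorem is about, and the subsequent attempts to recover it by matching at $z\to 0$ or $z\to\infty$ are circular — at $z=0$ the identity reduces to $\kappa(q,0)=\prod_n\zeta_n(q)/\zeta_n(1)$, which is what needs proving, and (\ref{WH_fact_1}) evaluated at $z=0$ only returns $q=q$.

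The second gap is that the difference trick pins down only $\kappa(q,z)/\kappa(1,z)$ and leaves $\kappa(1,z)$ undetermined, which you flag but do not resolve. The paper avoids this entirely: it keeps the term $\ln(1+\Psi(u))/u$ inside the integrand and applies the paired identity
\[
 \frac{1}{2\pi \i} \int\limits_{\r} \left(\frac{\ln(\tfrac{1}{b}(a-u))}{u- \i z}-\frac{\ln(\tfrac{1}{b}(b-u))}{u} \right) \d u=
\ln\left(\tfrac{1}{b}(a-\i z)\right), \qquad \im(a)<0,\ \im(b)<0,
\]
(with the integral vanishing when $\im(a)>0$, $\im(b)>0$), which yields $\kappa(q,z)$ outright, constant included. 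To bring $q+\Psi(u)$ into the required normalized form $\prod_n\frac{(\zeta_n(q)-\i u)/\zeta_n(1)}{1-\i u/\rho_n}\prod_n\frac{(\hat\zeta_n(q)+\i u)/\hat\zeta_n(1)}{1+\i u/\hat\rho_n}$ with no stray factor of $q$, the paper first proves $\prod_n\frac{\zeta_n(q)}{\zeta_n(1)}\prod_n\frac{\hat\zeta_n(q)}{\hat\zeta_n(1)}=q$ by letting $z\to\infty$ in the Wiener--Hopf factorization of $(q+\Psi(z))/(1+\Psi(z))$; this step, together with the convergence of $\prod_n\zeta_n(q)/\zeta_n(1)$ drawn from the $\zeta_n$-asymptotics of the specific families (which is where the hypothesis on $X$ actually enters), is the missing ingredient in your argument.
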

\begin{proof}
 If $X$ belongs to $\beta$-, $\theta$- or hypergeometric family of Levy processes, we can use asymptotics for $\zeta_n$ as $n\to +\infty$ 
 (see \cite{Kuz-beta}, \cite{Kuz-theta} and \cite{KyPavS}) to find that $ \zeta_n(q)/\zeta_n(1)=1+O(n^{-1-\epsilon})$ and 
$\hat \zeta_n(q)/\hat\zeta_n(1)=1+O(n^{-1-\epsilon})$,  which implies that  both infinite products
 \beqq
\prod\limits_{n\ge 1} \frac{\zeta_n(q)}{\zeta_n(1)}, \qquad \prod\limits_{n\ge 1} \frac{\hat \zeta_n(q)}{\hat \zeta_n(1)},
 \eeqq
converge. Next, using the same technique as in the proof of Lemma 6  in \cite{Kuz-beta}, one can prove that
\beqq
  \prod\limits_{n\ge 1} \frac{1-\frac{\i z}{\zeta_n(q)}}{1-\frac{\i z}{\zeta_n(1)}}
\prod\limits_{n\ge 1} \frac{1+ \frac{\i z}{\hat \zeta_n(q)}}{1+ \frac{\i z}{\hat \zeta_n(1)}} \to 
 \prod\limits_{n\ge 1} \frac{\zeta_n(1)}{\zeta_n(q)} \prod\limits_{n\ge 1} \frac{\hat \zeta_n(1)}{\hat \zeta_n(q)},
\eeqq
as $z\to \infty$, $z\in \r$. Using the Wiener-Hopf factorization $q/(q+\Psi(z))=\phiqp(z) \phiqm(z)$ and (\ref{eqn_WH_factor}) we obtain
\beqq
\frac{q+\Psi(z)}{1+\Psi(z)}=
q 
\prod\limits_{n\ge 1} \frac{1-\frac{\i z}{\zeta_n(q)}}{1-\frac{\i z}{\zeta_n(1)}}
\prod\limits_{n\ge 1} \frac{1+ \frac{\i z}{\hat \zeta_n(q)}}{1+ \frac{\i z}{\hat \zeta_n(1)}}.
\eeqq
Since $X$ is not a compound Poisson process, we have $|\Psi(z)| \to \infty$ as $z \to \infty$, $z\in \r$, thus we finally  conclude that
\beqq
\prod\limits_{n\ge 1} \frac{\zeta_n(q)}{\zeta_n(1)} \prod\limits_{n\ge 1} \frac{\hat \zeta_n(q)}{\hat \zeta_n(1)}=q. 
\eeqq
The next step is to use the Wiener-Hopf factorization $q/(q+\Psi(z))=\phiqp(z) \phiqm(z)$, (\ref{eqn_WH_factor}) and the above identity to 
 rewrite $q+\Psi(z)$ as 
 \beqq
 q+\Psi(z)=
\prod\limits_{n\ge 1} \frac{ \frac{\zeta_n(q)-\i z}{\zeta_n(1)}}{1-\frac{\i z}{\rho_n}}
\prod\limits_{n\ge 1} \frac{\frac{\hat \zeta_n(q) + \i z}{\hat \zeta_n(1)}}{1+\frac{\i z}{\hat \rho_n}}.
 \eeqq
Now we can use the above factorization and the following integral identity (which can be proved by shifting the contour of integration in the complex plane) 
 \beqq
 \frac{1}{2\pi \i} \int\limits_{\r} \left(\frac{\ln(\frac{1}{b}(a-u))}{u- \i z}-\frac{\ln(\frac{1}{b}(b-u))}{u} \right) \d u=
 \begin{cases}
\ln\left(\frac{1}{b}(a-\i z)\right), \; &\textnormal{ if  } \im(a)<0, \; \im(b)<0 \\
0, \; &\textnormal{ if  } \im(a)>0, \; \im(b)>0,
 \end{cases}
 \eeqq 
to deduce that
 for $\re(z)>0$ 
 \beqq
\frac{1}{2\pi \i} \int\limits_{\r} 
 \left(\frac{\ln(q+\Psi(u))}{u-\i z}-\frac{\ln(1+\Psi(u))}{u} \right) \d u = \sum\limits_{n\ge 1} 
 \left[ \ln \left(  \frac{\zeta_n(q)+z}{\zeta_n(1)}\right) - \ln\left( 1+\frac{z}{\rho_n} \right) \right],
 \eeqq
which is equivalent to (\ref{eq_kappa_phiqp}).
\end{proof}

\begin{corollary}\label{corollary_bivariate_Pi}
  Let $\nu(\d s, \d x)$ be the L\'evy measure of the ascending ladder process $(L_t^{-1},H_t)$. Then for $x>0$ we have
 \beq\label{eqn_ladder_pr_Levy_measure}
\int\limits_{\r^{\plus}} e^{-q s} \nu (\d s, \d x)=\left[ \; \prod\limits_{n\ge 1} \frac{\zeta_n(q)}{\zeta_n(1)} \right]
   \bar\coeffb(\zeta,\rho)^T \times \bar\vv(\rho,x)  \d x.
 \eeq 
\end{corollary}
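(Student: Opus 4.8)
The plan is to pin down $\nu(\d s,\d x)$ by producing two different expressions for the Laplace exponent $\kappa(q,z)$ of the bivariate ladder process and matching them: one is the formula of Theorem \ref{thm_kappa} fed into the partial fraction expansion of Lemma \ref{analytical_lemma}, the other is the L\'evy--Khintchine representation of a (possibly killed) bivariate subordinator. First I would recall that, since $(L^{-1}_t,H_t)$ is such a subordinator (here $\nu$ is carried by $\{s>0\}$, because every jump of $H$ occurs at a ladder epoch which is a jump time of $L^{-1}$), for each fixed $q>0$ the map $z\mapsto\kappa(q,z)$ can be rearranged as
\[
\kappa(q,z)=\kappa(q,0)+\mathtt{d}\,z+\int_{(0,\infty)}\left(1-e^{-zx}\right)\mu_q(\d x),\qquad \mu_q(\d x):=\int_{\r^{\plus}}e^{-qs}\,\nu(\d s,\d x),
\]
where $\mathtt{d}\ge 0$ is the drift coefficient of the ladder height $H$, and all contributions not depending on $z$ --- the killing rate, the drift of $L^{-1}$, the mass of $\nu$ on $\{x=0\}$, and the term $\int_{(0,\infty)^2}(1-e^{-qs})\,\nu(\d s,\d x)$ --- have been absorbed into the constant $\kappa(q,0)$. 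Producing this display amounts to writing $1-e^{-qs-zx}=(1-e^{-qs})+e^{-qs}(1-e^{-zx})$ on $\{x>0\}$, noting that both resulting integrals converge absolutely because $\int_{[0,\infty)^2}\bigl(1\wedge(s+x)\bigr)\,\nu(\d s,\d x)<\infty$, and applying Tonelli's theorem (legitimate since $q>0$ and $1-e^{-zx}\ge0$) to recognise $\int_{(0,\infty)^2}e^{-qs}(1-e^{-zx})\,\nu(\d s,\d x)$ as $\int_{(0,\infty)}(1-e^{-zx})\,\mu_q(\d x)$.

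Next I would invoke Theorem \ref{thm_kappa}, which gives $\kappa(q,z)=\bigl[\prod_{n\ge1}\zeta_n(q)/\zeta_n(1)\bigr][\phiqp(\i z)]^{-1}$, and apply Lemma \ref{analytical_lemma} in its integrated form (\ref{see-the-drift}) to $[\phiqp(\i z)]^{-1}=1/\phi(z)$ with $\phi(z)=\phiqp(\i z)=\prod_{n\ge1}(1+z/\rho_n)/(1+z/\zeta_n)$ (cf. (\ref{eqn_WH_factor}); the interlacing (\ref{interlacing_prop}) is precisely the hypothesis of the lemma), obtaining
\[
\kappa(q,z)=\left[\prod_{n\ge1}\frac{\zeta_n(q)}{\zeta_n(1)}\right]\left\{1+\coeffb_0(\zeta,\rho)\,z+\int_{\r^{\plus}}\left[\sum_{n\ge1}\coeffb_n(\zeta,\rho)\,\rho_n e^{-\rho_n x}\right]\left(1-e^{-zx}\right)\d x\right\}.
\]
Both displays present $z\mapsto\kappa(q,z)$ in the shape ``constant $+$ linear in $z$ $+\int_{(0,\infty)}(1-e^{-zx})(\,\cdot\,)(\d x)$''. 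By uniqueness of the L\'evy--Khintchine representation of a subordinator Laplace exponent --- equivalently, by identifying the linear coefficient as $\lim_{z\to\infty}\kappa(q,z)/z$ and then using uniqueness of Laplace transforms of measures on $(0,\infty)$ --- I match the pieces: the linear terms give $\mathtt{d}=\bigl[\prod_n\zeta_n(q)/\zeta_n(1)\bigr]\coeffb_0(\zeta,\rho)$ (consistent with Theorem \ref{M_class_properties}(iv)), and the integral terms give, for $x>0$,
\[
\int_{\r^{\plus}}e^{-qs}\,\nu(\d s,\d x)=\mu_q(\d x)=\left[\prod_{n\ge1}\frac{\zeta_n(q)}{\zeta_n(1)}\right]\sum_{n\ge1}\coeffb_n(\zeta,\rho)\,\rho_n e^{-\rho_n x}\,\d x.
\]
Since $\delta_0(x)=0$ for $x>0$, the right-hand side equals $\bigl[\prod_n\zeta_n(q)/\zeta_n(1)\bigr]\,\bar\coeffb(\zeta,\rho)^T\times\bar\vv(\rho,x)\,\d x$, which is exactly (\ref{eqn_ladder_pr_Levy_measure}).

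The step I expect to be the main obstacle is justifying the first display: one must be confident that the only linear-in-$z$ contribution to $\kappa(q,z)$ is the genuine drift of $H$, and that the measure paired with $1-e^{-zx}$ is precisely the $q$-discounted slice $\int e^{-qs}\nu(\d s,\cdot)$ rather than some other rearrangement of the bivariate L\'evy measure. This requires the careful splitting of the jump integral over $\{x=0\}$ and $\{x>0\}$ together with the Tonelli step, and relies on the finiteness $\int(1\wedge(s+x))\,\nu<\infty$. Once this separation is in place, the remainder is bookkeeping assembled from Theorem \ref{thm_kappa}, Lemma \ref{analytical_lemma}, and (\ref{eqn_WH_factor}), all of which are already available.
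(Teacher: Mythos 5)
Your proposal is correct and follows essentially the same route as the paper's (much terser) proof: combine the representation $\kappa(q,z)=\kappa(q,0)+az+\int_0^\infty(1-e^{-zx})\int_0^\infty e^{-qs}\nu(\d s,\d x)$ with Theorem \ref{thm_kappa} and the expansion (\ref{see-the-drift}) of $[\phiqp(\i z)]^{-1}$, then match the drift and integral terms by uniqueness of the Laplace transform. The extra care you take in isolating the $q$-discounted slice $\int e^{-qs}\nu(\d s,\cdot)$ is a sound elaboration of a step the paper simply asserts.
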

\begin{proof}
 Formula (\ref{eqn_ladder_pr_Levy_measure}) is a corollary of (\ref{eq_kappa_phiqp}), (\ref{see-the-drift}) and the  fact that $\nu(\d s, \d x)$ is related to $\kappa(q,z)$ %(see \cite{Kyprianou}) 
 through the formula 
\beqq
 \kappa(q,z)=\kappa(q,0)+az +\int\limits_{0}^{ \infty} \left(1 - e^{ - z x} \right) \int\limits_{0}^{ \infty}e^{-qs}\nu(\d s, \d x).
\eeqq
\end{proof}

\section{More Fluctuation identities}\label{more}
We offer some more fluctuation identities. Although they are slightly more complex, they are still equally straightforward for the purpose of numerical work.

We assume throughout this section that $X$ is regular for both $(0,\infty)$ and $(-\infty,0)$. Equivalently, we assume that $\coeffa_0(\rho,\zeta) = \coeffa_0(\hat\rho, \hat\zeta)=0$. This is the case if, for example, $X$ has paths of unbounded variation. It will be clear from the proofs of the results given below how this assumption may be removed.

For $a>0$ and for $y \le a$ we define   resolvent for $X$ killed on leaving $(-\infty, a]$ as
\[
R^{(q)}(a, {\rm d}y) : = \int_0^\infty e^{-qt} \mathbb{P} (X_t\in {\rm d}y; t< \tau^+_a){\rm d}t.
\]

\begin{theorem}\label{thm_Rq}
Define a matrix $\dd=\{d_{i,j}\}_{i,j\ge 0}$ as follows
 \beqq
 d_{i,j}=
 \begin{cases}
  0\;\;\; & {\textnormal {if  }}  i = 0 {\textnormal { or }}  j=0 \\
 \displaystyle \coeffa_i(\rho,\zeta) \frac{ 1}{\zeta_i+ \hat \zeta_j}
 \coeffa_j(\hat \rho, \hat \zeta)    \;\;\; &{\textnormal {if  }} i\ge 1, \; j \ge 1 .
 \end{cases}
 \eeqq
Then if $y \le a$  we have
 \beqq
 qR^{(q)}(a, {\rm d}y)&=&  \left[ \bar \vv (\zeta,0 \vee y)^T \times \dd \times \bar \vv(\hat \zeta, 0 \vee (-y)))- 
 \bar \vv(\zeta,a)^T \times \dd \times \bar \vv(\hat \zeta,a-y) \right] \d y.%\\ \nonumber
%&+&\coeffa_0(\hat\rho,\hat \zeta)  \left[ \bar \coeffa(\rho,\zeta)^T \times \bar \vv(\zeta,y) \right] \ind_{\{y> 0\}} \d y\\ \nonumber
%&+& 
%\coeffa_0(\rho,\zeta) \left[ \bar \coeffa(\hat\rho,\hat\zeta)^T \times \bar %\vv(\hat\zeta,-y)\right]\ind_{\{y< 0\}}  \d y
 \eeqq
%and the atom at zero is $qR^{(q)}(a, \{0\})=\coeffa_0(\rho,\zeta)\coeffa_0(\hat\rho,\hat \zeta)$.
\end{theorem}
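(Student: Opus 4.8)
The plan is to identify the killed resolvent with a Wiener--Hopf convolution and then insert the explicit $M$-class densities of $\xsup_{\ee(q)}$ and $\xinf_{\ee(q)}$ supplied by Theorem \ref{M_class_properties}(ii). First I would record the elementary identity
\[
q R^{(q)}(a,\d y)=\e\left[\ind\left(X_{\ee(q)}\in\d y\,;\,\ee(q)<\tau_a^+\right)\right]=\p\left(X_{\ee(q)}\in\d y\,;\,\xsup_{\ee(q)}\le a\right),
\]
obtained by reading $q\int_0^\infty e^{-qt}(\cdot)\,\d t$ as an expectation against an independent exponential clock $\ee(q)$ and noting that $\{\ee(q)<\tau_a^+\}=\{\xsup_{\ee(q)}\le a\}$. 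I would then invoke the Wiener--Hopf decomposition in its joint form: $\xsup_{\ee(q)}$ and $X_{\ee(q)}-\xsup_{\ee(q)}$ are independent and $X_{\ee(q)}-\xsup_{\ee(q)}\stackrel{d}{=}\xinf_{\ee(q)}$. Since the second factor is non-positive, conditioning on the value of $\xsup_{\ee(q)}$ gives
\[
q R^{(q)}(a,\d y)=\int\limits_{[0,a]}\p\left(\xsup_{\ee(q)}\in\d u\right)\,\p\left(\xinf_{\ee(q)}\in\d y-u\right).
\]

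Next I would substitute the $M$-class formulae. Under the standing assumption that $X$ is regular for both half-lines we have $\coeffa_0(\rho,\zeta)=\coeffa_0(\hat\rho,\hat\zeta)=0$, so Theorem \ref{M_class_properties}(ii) yields the absolutely continuous laws
\[
\p\big(\xsup_{\ee(q)}\in\d u\big)=\Big(\sum\limits_{i\ge1}\coeffa_i(\rho,\zeta)\,\zeta_i e^{-\zeta_i u}\Big)\d u,\quad u>0,
\]
\[
\p\big(\xinf_{\ee(q)}\in\d w\big)=\Big(\sum\limits_{j\ge1}\coeffa_j(\hat\rho,\hat\zeta)\,\hat\zeta_j e^{\hat\zeta_j w}\Big)\d w,\quad w<0.
\]
Every summand is non-negative, so Tonelli's theorem licenses all interchanges of sums and integrals. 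The integrand of the convolution vanishes unless $0\vee y<u<a$, and carrying out the elementary integral $\int_{0\vee y}^{a}e^{-(\zeta_i+\hat\zeta_j)u}\d u$ produces
\[
q R^{(q)}(a,\d y)=\Big[\sum\limits_{i,j\ge1}\coeffa_i(\rho,\zeta)\,\coeffa_j(\hat\rho,\hat\zeta)\,\frac{\zeta_i\hat\zeta_j}{\zeta_i+\hat\zeta_j}\,e^{\hat\zeta_j y}\big(e^{-(\zeta_i+\hat\zeta_j)(0\vee y)}-e^{-(\zeta_i+\hat\zeta_j)a}\big)\Big]\d y.
\]

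The final step is a short algebraic rearrangement. In the first of the two exponentials inside the bracket one uses $y-(0\vee y)=-(0\vee(-y))$ to write $e^{\hat\zeta_j y-(\zeta_i+\hat\zeta_j)(0\vee y)}=e^{-\zeta_i(0\vee y)}\,e^{-\hat\zeta_j(0\vee(-y))}$, and in the second one writes $e^{\hat\zeta_j y-(\zeta_i+\hat\zeta_j)a}=e^{-\zeta_i a}\,e^{-\hat\zeta_j(a-y)}$. Recognising the resulting double sums as the bilinear forms $\bar\vv(\zeta,0\vee y)^T\times\dd\times\bar\vv(\hat\zeta,0\vee(-y))$ and $\bar\vv(\zeta,a)^T\times\dd\times\bar\vv(\hat\zeta,a-y)$ --- the vanishing first row and column of $\dd$ absorbing the Dirac entries of the $\bar\vv$-vectors, which are anyway irrelevant since we are describing a density in $y$ --- completes the proof.

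I expect the only genuine subtlety to be the range of the convolution and the attendant $0\vee y$ case distinction: for $y\le 0$ the inner integral runs over all of $(0,a)$, whereas for $0<y\le a$ one needs $u>y$ in order that $\p(\xinf_{\ee(q)}\in\d y-u)$ be supported, and it is precisely this that produces the $0\vee y$ in the upper-boundary term and the $0\vee(-y)$ in the diagonal term; everything else is routine manipulation of a convergent positive double series. Finally, the assertion that the regularity hypothesis may be dropped is transparent from this argument: keeping the atoms $\coeffa_0(\rho,\zeta)\,\delta_0$ and $\coeffa_0(\hat\rho,\hat\zeta)\,\delta_0$ in the Wiener--Hopf densities merely reinstates a non-zero first row and column in $\dd$, together with the Dirac components of the $\bar\vv$-vectors, without changing the computation.
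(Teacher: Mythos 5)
Your proposal is correct and follows essentially the same route as the paper: the paper's starting point (the Spitzer–Bertoin identity $qR^{(q)}(a,\d y)=\int_{[0\vee y,a]}\p(\xsup_{\ee(q)}\in\d z)\,\p(\xinf_{\ee(q)}\in -z+\d y)$, cited from Bertoin's Theorem 20) is exactly the convolution formula you derive from the exponential clock together with the joint Wiener--Hopf decomposition, and the subsequent substitution of (\ref{dist_sup_X}) and termwise integration is identical. Your algebra, the $0\vee y$ versus $0\vee(-y)$ bookkeeping, and the remark on reinstating the atoms when regularity fails all check out.
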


\begin{proof}

%The Wiener-Hopf factorization tells us that $X_{\ee(q)}-\xsup_{\ee(q)}$ has the same distribution as $\xinf_{\ee(q)}$. Hence, denoting by
%$\underline{X}'_{\ee(q)}$ an independent copy of $\xinf_{\ee(q)}$, we obtain
From the proof of  Theorem 20  on p176 of \cite{Bertoin}, it can be seen that 
\beqq
q\int\limits_{(-\infty,a]} f(y)R^{(q)}(a, {\rm d}y) 
&=& %\e\left[f(X_{\ee(q)}) ;\overline{X}_{\ee(q)}\leq a \right]\\
% &=& \e\left[f( X_{\ee(q)}-\overline{X}_{\ee(q)} +\overline{X}_{\ee(q)} ) ; \overline{X}_{\ee(q)}\leq a\right]\\
%& =& \e\left[f( \underline{X}'_{\ee(q)} +\overline{X}_{\ee(q)} ) ; \overline{X}_{\ee(q)}\leq a\right]
%\eeqq
%Using independence of $\underline{X}'_{\ee(q)}$ and $\xsup_{\ee(q)}$ we find that the last expression is equal to
%\beqq
 \int\limits_{[0,a]} \mathbb{P}(\overline{X}_{\ee(q)} \in{\rm d}z) \int\limits_{[0,\infty)}  \mathbb{P}(-\underline{X}_{\ee(q)} \in{\rm d}u)f(-u+z)\\
 &=&  \int\limits_{[0,a]} \mathbb{P}(\overline{X}_{\ee(q)}  \in{\rm d}z) 
                     \int\limits_{-(\infty,z]} f(y) \p(\underline{X}_{\ee(q)}  \in  -z+\d y)\\
&=&  \int\limits_{(-\infty,a]}  f(y)\int\limits_{[0\vee y,a]} \mathbb{P}(\overline{X}_{\ee(q)} \in \d z)  
\p(\underline{X}_{\ee(q)}  \in -z+\d y).
\eeqq
Thus we obtain an alternative representation of the Spitzer-Bertoin identity 
\beqq
q R^{(q)}(a, {\rm d}y)= \int\limits_{[0\vee y,a]} \mathbb{P}(\overline{X}_{\ee(q)} \in \d z)  
\p(\underline{X}_{\ee(q)}  \in -z+\d y).
\eeqq
To finish the proof we have to use the formulas in (\ref{dist_sup_X}) and perform the integration in the above expression (noting that some terms are lost on account of the fact that we have assumed $\coeffa_0(\rho,\zeta) = \coeffa_0(\hat\rho, \hat\zeta)=0$).
\end{proof}

Next define 
\beqq
\Theta^{(q)}(a, x, {\rm d}y)  = \int_0^\infty e^{-q t}\mathbb{P}_x(X_t \in {\rm d}y, t<\tau^+_a \wedge \tau^-_0) \d t.
\eeqq
\begin{theorem}
Assume $q>0$ and $y\in [0,a]$, then
\beqq
\lefteqn{q\Theta^{(q)}(a, x, {\rm d}y)} &&\\
&&=\left[ \bar \vv (\zeta,0 \vee y-x)^T \times \dd \times \bar \vv(\hat \zeta, 0 \vee (x-y))- 
 \bar \vv(\zeta,a-x)^T \times \dd \times \bar \vv(\hat \zeta,a-y) \right] \d y \\
 &&- \left[  \bar \vv(\hat \zeta,x)^T \times \hat \cc_1 + \bar \vv(\zeta,a-x)^T \times \hat \cc_2 \right] 
\big [\bb(y) \times \dd \times \bar \vv(\hat\zeta,0) - \bb(a) \times \dd \times \bar \vv(\hat\zeta,a-y) %+\coeffa_0(\hat\rho,\hat \zeta) \bb(y)\times \bar \coeffa(\rho,\zeta)
 \big ] {\rm d}y.
\eeqq
where matrix $\bb(y)=\bb(\hat \rho,\zeta,y)$  is defined in (\ref{def_B}), $\dd$ is defined in Theorem \ref{thm_Rq} while matrices $\hat \cc_1$ and $\hat \cc_2$ come from Theorem  \ref{thm_two_sided}. % The atom at $y=x$ is $q\Theta^{(q)}(a, x, \{x\})=\coeffa_0(\rho,\zeta)\coeffa_0(\hat\rho,\hat \zeta)$. 
Also, $\bar \vv(\hat\zeta,0)$ is intepreted as $[0,\hat\zeta_1,\hat\zeta_2,...]^T$. 
\end{theorem}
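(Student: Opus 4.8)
The plan is to express the two-sided killed resolvent $\Theta^{(q)}(a,x,\cdot)$ in terms of the one-sided killed resolvent $R^{(q)}$ of Theorem~\ref{thm_Rq}, by peeling off, via the strong Markov property at $\tau_0^-$, the portion of the trajectory spent after the process first enters $(-\infty,0)$. Fix $x\in(0,a)$ and $y\in[0,a]$. For each $t\ge 0$ one has the disjoint decomposition $\{t<\tau_a^+\}=\{t<\tau_a^+\wedge\tau_0^-\}\cup\{\tau_0^-\le t<\tau_a^+\}$, and on the second event necessarily $\tau_0^-<\tau_a^+$. Multiplying by $e^{-qt}\ind(X_t\in\d y)$, integrating in $t$ and taking $\e_x$, this gives
\beqq
q\Theta^{(q)}(a,x,\d y)=qR^{(q)}_x(a,\d y)-q\,\e_x\!\left[\int_{\tau_0^-}^{\infty}e^{-qt}\,\ind(X_t\in\d y)\,\ind\!\left(\tau_0^-<\tau_a^+,\,t<\tau_a^+\right)\d t\right],
\eeqq
where $R^{(q)}_x(a,\d y)=\int_0^{\infty}e^{-qt}\p_x(X_t\in\d y;\,t<\tau_a^+)\,\d t$ is the resolvent of $X$ killed on leaving $(-\infty,a]$ started from $x$. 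By spatial homogeneity $R^{(q)}_x(a,\d y)=R^{(q)}(a-x,\d(y-x))$, so Theorem~\ref{thm_Rq} evaluated at the parameters $a-x>0$ and $y-x\le a-x$ produces precisely the first bracket in the statement.

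For the correction term I would apply the strong Markov property at $\tau_0^-$: on $\{\tau_0^-<\tau_a^+,\;X_{\tau_0^-}=w\}$ with $w\le 0$ the shifted occupation integral equals $R^{(q)}_w(a,\d y)=R^{(q)}(a-w,\d(y-w))$, so the correction becomes
\beqq
\int_{(-\infty,0]}\e_x\!\left[e^{-q\tau_0^-};\,X_{\tau_0^-}\in\d w;\,\tau_0^-<\tau_a^+\right]qR^{(q)}(a-w,\d(y-w)).
\eeqq
Here I would insert two known ingredients. First, the law of $(e^{-q\tau_0^-};X_{\tau_0^-})$ on $\{\tau_0^-<\tau_a^+\}$ is supplied by Theorem~\ref{thm_two_sided}, namely $\bigl[\bar\vv(\hat\zeta,x)^T\hat\cc_1+\bar\vv(\zeta,a-x)^T\hat\cc_2\bigr]\bar\vv(\hat\rho,-w)\,\d w$ for $w\le 0$. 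Second, since $w\le 0$ and $y\in[0,a]$ we have $y-w\ge 0$ and $a-w>0$, so in Theorem~\ref{thm_Rq} the maxima simplify, $0\vee(y-w)=y-w$ and $0\vee(w-y)=0$, giving $qR^{(q)}(a-w,\d(y-w))=\bigl[\bar\vv(\zeta,y-w)^T\dd\,\bar\vv(\hat\zeta,0)-\bar\vv(\zeta,a-w)^T\dd\,\bar\vv(\hat\zeta,a-y)\bigr]\d y$.

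It then remains to integrate in $w$. The key identity is
\beqq
\int_{(-\infty,0]}\bar\vv(\hat\rho,-w)\,\bar\vv(\zeta,c-w)^T\,\d w=\bb(\hat\rho,\zeta,c)=\bb(c),\qquad c>0,
\eeqq
which is exactly the computation already performed in the proof of Theorem~\ref{thm_two_sided} (after the substitution $z=-w$) and which one verifies entrywise against definition~(\ref{def_B}); using it lets one pull the matrices out of the $w$-integral and associate the matrix products correctly. Applying it with $c=y$ and with $c=a$ turns the correction into
\beqq
\bigl[\bar\vv(\hat\zeta,x)^T\hat\cc_1+\bar\vv(\zeta,a-x)^T\hat\cc_2\bigr]\bigl[\bb(y)\,\dd\,\bar\vv(\hat\zeta,0)-\bb(a)\,\dd\,\bar\vv(\hat\zeta,a-y)\bigr]\d y,
\eeqq
which is the second bracket, completing the identity.

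The technical points to keep an eye on, all of them routine, are: the $j=0$ (Dirac) components of the various $\bar\vv$-vectors, which never actually contribute here because the matrix $\dd$ has vanishing zeroth row and column under the standing hypothesis $\coeffa_0(\rho,\zeta)=\coeffa_0(\hat\rho,\hat\zeta)=0$ of this section (this is also why $\bar\vv(\hat\zeta,0)$ may be read as $[0,\hat\zeta_1,\hat\zeta_2,\dots]^T$); the finiteness of $\tau_0^-$ on the relevant event, which is automatic on $\{\tau_0^-<\tau_a^+\}$ since $x\in(0,a)$; and the possibility that $X_{\tau_0^-}=0$ (downward creeping at level $0$), which is harmlessly absorbed into the $\delta_0$-entry of $\bar\vv(\hat\rho,-w)$ and is already built into Theorem~\ref{thm_two_sided}. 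I expect the only real work to be the entrywise bookkeeping in the last displayed step — checking that the $w$-integral reproduces exactly the matrices $\bb(y)$ and $\bb(a)$, including the correct boundary behaviour of the Dirac terms — rather than anything conceptual; all the probabilistic content is already contained in Theorems~\ref{thm_Rq} and~\ref{thm_two_sided}.
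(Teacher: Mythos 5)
Your proposal is correct and follows essentially the same route as the paper: decompose the doubly-killed resolvent as the one-sided resolvent $R^{(q)}(a-x,\d y - x)$ minus a correction obtained from the strong Markov property at $\tau_0^-$, then substitute the exit law of Theorem \ref{thm_two_sided} and the resolvent formula of Theorem \ref{thm_Rq} and integrate out the position at $\tau_0^-$ via $\int_0^\infty \bar\vv(\hat\rho,z)\,\bar\vv(\zeta,c+z)^T\,\d z=\bb(\hat\rho,\zeta,c)$. The only cosmetic difference is that the paper phrases the first splitting in terms of the exponential time $\ee(q)$ rather than the occupation integral, and it leaves to the reader the final bookkeeping (the simplification of the maxima and the $w$-integration) that you carry out explicitly.
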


\begin{proof}
Define 
\beq\label{def_gq}
\hat g^{(q)}(a,x, {\rm d}y) = \mathbb{E}_x\left[ e^{-q \tau^-_0} \ind(-X_{\tau^-_0} \in {\rm d}y  \; ; \;  \tau^-_0 <\tau^+_a ) \right].
\eeq
Note moreover that for $f$ supported in $[0,a]$,
\beqq
\int_{[0,a]} f(y)\Theta^{(q)}(a, x, {\rm d}y) &=&
\frac{1}{q}\e_x\left[ f(X_{\ee(q)}) \ind(\ee(q)<\tau_0^- \wedge \tau_a^+) \right]\\
&=& \frac{1}{q}\e_x\left[ f(X_{\ee(q)}) \ind(\ee(q)< \tau_a^+) \right]-\frac{1}{q}\e_x\left[ f(X_{\ee(q)}) \ind(\tau_0^-< \ee(q)< \tau_a^+) \right]\\
&=& \int_0^a f(y) R^{(q)}(a-x,\d y-x)-\frac{1}{q}\e_x\left[  f(X_{\ee(q)}) \ind(\tau_0^-< \ee(q)< \tau_a^+) \right].
\eeqq
The second expectation in the above expression can be rewritten as
\beqq
 \lefteqn{
 \int_0^\infty e^{-q t}\mathbb{E}_x\left[ f(X_t) \ind(\tau_0^-< t < \tau_a^+) \right]{\rm d}t}&&\\
&=&\mathbb{E}_x\left[
\int_{\tau^-_0}^\infty e^{-qt} f(X_t)\ind(t<\tau^+_a ) {\rm d}t \right]\\
&=&\mathbb{E}_x\left[e^{-q\tau^-_0}\ind(\tau^-_0<\tau^+_a )
\mathbb{E}_{X_{\tau^-_0}}\left[\int_{0}^\infty e^{-qt} f(X_t)\ind(t<\tau^+_a ){\rm d}t
\right]\right]\\
&=& \int\limits_{\r^+} \hat g^{(q)}(a,x,\d z) \int\limits_{\r^+} R^{(q)}(a+z,z+\d y) f(y).
\eeqq
So in conclusion we have for $y\in[0,a]$,
\beqq
\Theta^{(q)}(a, x, {\rm d}y) =R^{(q)}(a-x, {\rm d}y-x)- \int\limits_{\r^+}  \hat g^{(q)}(a,x,\d z) R^{(q)}(a+z,z+\d y) ,
\eeqq
and to end the proof one should use results of Theorems \ref{thm_two_sided} and \ref{thm_Rq} and compute the above integral. The details are left to the reader.
\end{proof}

\begin{remark} The previous result also allows us to write down the discounted joint overshoot, undershoot distribution for the two-sided exit problem:
\beqq
g^{(q)}(a, x, {\rm d}y, {\rm d}z) =\e_x\left[ e^{- q\tau^+_a} \ind( \tau^+_a <\tau^-_0 \; ; \;  X_{\tau^+_a}\in {\rm d}y \;;\; X_{\tau^+_a -}\in{\rm d}z )\right].
\eeqq
 (Again this relates to the so-called Gerber-Shiu measure for classical risk theory). 
Indeed, for $y>a$ and $z\in[0,a]$, using the compensation formula we have
\beqq
g^{(q)}(a, x, {\rm d}y, {\rm d}z)&=&\e_x\left[ \sum_{t\geq 0}e^{-qt} \ind( \overline{X}_{t-}\leq a,\, \underline{X}_{t-}\geq 0,\,  X_{t-}\in{\rm d}z )
 \ind( X_t\in{\rm d}y )\right]\\
&=&\e_x\left[\int\limits_{\r^+} e^{-qt} \ind ( \overline{X}_{t-}\leq a,\, \underline{X}_{t-}\geq 0,\,  X_{t-}\in{\rm d}z ) {\rm d}t \right]\Pi({\rm d}y -z)\\
&=&\Theta^{(q)}(a, x, {\rm d}z)\Pi({\rm d}y -z).
\eeqq
\end{remark}

\begin{remark}
Define the reflected process $Y: = X-\underline{X}$ and its resolvent when killed on exiting $[0,a]$,
\beqq
\Lambda^{(q)}(a, x,{\rm d}y) = \int_0^\infty e^{-q t}\mathbb{P}_x(Y_t \in {\rm d}y, \,t<\sigma_a ) \d t,
\eeqq 
where $\sigma_a = \inf\{s>0: Y_s >a \}$. According to \cite{Baurdoux2009}  one may write for all $q\geq 0$,
\[
\Lambda^{(q)}(a, x, {\rm d}y) = \Theta^{(q)}(a, x,{\rm d}y) + \hat g^{(q)}(a, x)\cdot\lim_{z\downarrow 0}
\frac{\Theta^{(q)}(a,z, {\rm d}y)}{1-\hat g^{(q)}(a, z)}.
\]
where $\hat g^{(q)}(a, x) = \mathbb{E}_x [ e^{- q\tau^-_0} \ind( \tau^+_a >\tau^-_0) ]$.
Moreover, note that we may continue the computations in a similar way to before with the help of the compensation formula,
\[
\mathbb{E}_0 \left[ e^{-q\sigma_a} \ind( Y_{\sigma_a}\in{\rm d}y \; ; \;  Y_{\sigma_a -}\in{\rm dz}) \right]=
\Lambda^{(q)}(a, 0, {\rm d}z)\Pi({\rm d}y -z),
\]
and hence
\[
\mathbb{E}_x \left[ e^{-q\sigma_a}\ind( Y_{\sigma_a}\in{\rm d}y \; ; \;  Y_{\sigma_a -}\in{\rm dz})\right] = 
g^{(q)}(a, x, {\rm d}y, {\rm d}z) + \hat g^{(q)}(a, x)\Lambda^{(q)}(a, 0, {\rm d}z)\Pi({\rm d}y -z).
\]
\end{remark}

\section{Numerical Results}\label{sec_numerics}
 For all our numerical examples we will use a process $X$ from the $\beta$-family (see the introduction or \cite{Kuz-beta}) having parameters
 \beqq
  (\sigma, \mu, \alpha_1,\beta_1,\lambda_1,c_1,\alpha_2,\beta_2,\lambda_2, c_2) = (\sigma, \mu, 1, 1.5, 1.5, 1, 1, 1.5, 1.5 , 1).
 \eeqq
Here $\mu=\e[ X_1]$ and $\sigma$ is the Gaussian coefficient, the other parameters define the density of a L\'evy measure, which has exponentially decaying tails and $O(|x|^{-3/2})$ singularity at $x=0$, thus this process has jumps of infinite activity but finite varation. We define the
 following four parameter sets
 \beqq
&& {\textnormal{ Set 1: }} \sigma=0.5, \mu=1  \qquad  {\textnormal{ Set 2: }} \sigma=0.5,  \mu=-1 \\
&& {\textnormal{ Set 3: }} \sigma=0,  \mu=1 \qquad \;\;\; {\textnormal{ Set 4: }} \sigma=0, \mu=-1.
 \eeqq

 As the first illustration of the efficiency of our algorithms we will compute the following three quantities related to the double exit problem: 
\begin{itemize}
 \item[(i)]  density of the overshoot one the event that the process exists at the upper boundary
 \beqq
 f_1(x,y)=\frac{\d}{\d y} \; \e_x \left[ e^{-q \tau_1^+} \ind \left(X_{\tau_1^+} \le  y \; ; \; \tau_1^+<\tau_0^- \right)\right],
 \eeqq 
 \item[(ii)]  probability of exiting from the interval $[0,1]$ at the upper boundary
  \beqq
 f_2(x)=\e_x \left[ e^{-q \tau_1^+} \ind \left( \tau_1^+<\tau_0^- \right)\right],
 \eeqq
 \item[(iii)] probability of exiting the interval $[0,1]$ by creeping across the upper boundary
  \beqq
 f_3(x)=\e_x \left[ e^{-q \tau_1^+} \ind \left(X_{\tau_1^+}=1 \; ; \; \tau_1^+<\tau_0^- \right)\right].
 \eeqq
\end{itemize}
In order to compute these expressions we use methods described in Theorem \ref{thm_two_sided}. We truncate all the matrices $\cc_i, \hat \cc_i$ so that they have 
 size $200 \times 100$ (this corresponds to truncating coefficients $\coeffa_i(\rho,\zeta)$ and $\coeffa_i(\hat\rho,\hat\zeta)$ 
at $i=200$ and coefficients $\coeffb_j(\zeta,\rho)$ and $\coeffb_j(\hat \zeta,\hat \rho)$ at $j=100$). In order to compute coefficients 
$\coeffa_i(\rho,\zeta)$, $\coeffa_i(\hat\rho,\hat\zeta)$, $\coeffb_j(\zeta,\rho)$ and $\coeffb_j(\hat \zeta,\hat \rho)$ we truncate infinite products in 
(\ref{formula_cn_a_b}) and (\ref{formula_cn_b_a}) at $k=400$, thus all the computations depend on precomputing $\{\zeta_n,\hat \zeta_n\}$ for $n=1,2,..,400$.
All the code was written in Fortran and  the computations were performed on a
standard laptop (Intel Core 2 Duo 2.5 GHz processor and 3 GB of RAM).

We present the results for $q=1$ in Figures \ref{fig_Set12} and  \ref{fig_Set34}. Computations required to produce graphs for each parameter set took around 0.15 seconds. 
The numerical results clearly show the effects that we would expect to see. In Figures  \ref{fig_set1_1d}, \ref{fig_set2_1d} and \ref{fig_set3_1d}
 we see a positive probability of creeping, which is expected since the process $X$ has a Gaussian component in the first two cases and a bounded variation
and positive drift in the third case. Parameter Set 4 corresponds to a process with bounded variation and negative drift, thus we do not have any upward creeping, 
and this is exactly what we obtain in Figure \ref{fig_set4_1d}. Also, figures \ref{fig_set1_2d}, \ref{fig_set2_2d} and \ref{fig_set3_2d} show that
$f_1(x,y) \to 0$ as $x\to 1^-$, which confirms our expectation, as in this case the upper half line is regular and as $x\to 1^-$ the process will cross the barrier at 1 
by creeping, not by jumping over it. This is different from figure   \ref{fig_set4_2d}, where, because of bounded variation and negative drift, the upper half line is 
irregular and the only way to cross the barrier at 1 is by jumping over it. Next, Figures \ref{fig_set1_1d} and \ref{fig_set2_1d} show that $f_i(x) \to 0$ as $x\to 0^+$ and $f_i(x)\to 1$ as $x\to 1^-$, which again agrees with the theory, as for parameter Sets 1 and 2 the process $X$ has a Gaussian
component, therefore $0$ is regular for $(-\infty,0)$ and $(0,\infty)$. This is not so for parameter Sets 3 and 4, since now the process $X$ has bounded variation
and drift, and depending on the sign of the drift, $0$ is regular for either $(0,\infty)$ or $(-\infty,0)$, and this is what we observe on figures  
\ref{fig_set3_1d} and \ref{fig_set4_1d}.

For the next example, we compute the density $u(s,x)$ of the bivariate renewal measure ${\mathcal U}(\d s, \d x)$ defined by
 \beqq
 {\mathcal U}(\d s, \d x)=\int\limits_{\r^+} \p(L_t^{-1} \in \d s, H_t \in \d x) \d t ,
 \eeqq
 where $(L,H)$ is the ascending ladder process, see \cite{Doney}, \cite{Kyprianou} and \cite{KyPaRi}. This measure is a very important object, 
 as it gives us full knowledge of the quintuple law at the first passage, see \cite{Doney}. 
 Using formulas 6.18 and 7.10 in \cite{Kyprianou} we see that ${\mathcal U}(\d s, \d x)$ satisfies
\beqq
\int\limits_{\r^+} e^{-q s} {\mathcal U}(\d s, \d x)=\frac{\p(\xsup_{\ee(q)} \in \d x)}{\kappa(q,0)},
\eeqq
therefore using Theorem \ref{thm_kappa} and (\ref{dist_sup_X}) we find that the density of ${\mathcal U}(\d s, \d x)$  can be computed as
\beqq
u(s,x)= \frac{1}{2\pi \i} \int\limits_{q_0+\i \r} \prod\limits_{n\ge 1} \frac{\zeta_n(1)}{\zeta_n(q)} \left[ \bar\coeffa(\rho,\zeta(q))^T \times \bar\vv(\zeta(q),x) \right] e^{qs} \d q,
\eeqq
for any $q_0>0$. It turns out that it is quite easy to compute the above integral numerically using technique discussed in \cite{Kuz-beta} coupled with a Filon-type method (see \cite{Iserles}). Producing each graph on Figure \ref{fig_Vdxds} takes around $1.2$ seconds. In order to compute $u(s,x)$ we truncated  
$\coeffa_i(\rho,\zeta)$ at $i=100$ and used $200$ roots $\zeta_k$ to compute these coefficients using formulas (\ref{formula_cn_a_b}). We chose $q_0=0.25$, truncated integal in $q$ at $|\im(q)|<10^4$. 
The numerical results are presented in Figure \ref{fig_Vdxds}. Note that both the ascending ladder height and time processes behind Figure \ref{fig_Vdxds} (c) have a linear drift where as in Figure \ref{fig_Vdxds} (d) they are both driftless compound Poisson processes. In the former case this explains the strong concentration of mass around a linear trend, and in the latter case there exists an atom at $x=s=0$, which is not visible on the graph since  we are only plotting the absolutely continuous part.

As we have mentioned in the introduction, all expressions related to fluctuation identities presented in this paper have the following property: (i) they are 
computed explicitly in terms of roots/poles of $q+\Psi(\i z)$ and possibly some linear algebra operations, (ii) all of them have the law of the space variables 
(for example, overshoot or location of the last maximum) in closed form and  (iii) they involve Laplace transform of the first passage time $\tau_a^+$ or $\tau_0^-$. The third condition implies that if we want to compute joint distribution of both space and time functionals of the process (for example, joint density of the first passage time
and the overshoot) we would have to perform a Fourier transform in the $q$-variable, and it has to be done numerically. See \cite{Kuz-beta} and \cite{KyPavS} for examples
of application of this technique. It turns out, that using exactly the same method we can also be used to obtain similar reults for L\'evy processes 
with stochastic volatility, which are very popular models in Mathematical Finance, see for example \cite{Carr2003}. 
 We will briefly present this technique for the case of the two-sided exit problem considered above. 
 Let $T_s$ be an increasing continuous process satisfying $T_0=0$. We require that $\e [ \exp(q T_s)]$ is known in closed form, a 
classical example is the integral of the Cox-Ingersoll-Ross diffusion process, however one can also choose several other processes, see \cite{HurdKuz}. Define a time-changed process $Z_s=X_{T_s}$, $s\geq 0$, where we assume that $T_s$ is independent of $X_t$. As before, define
 $s_0^-$   $\{s_a^+\}$ to be the first passage time of process $Z_s$ below $0$ \{above $a$\}. Since $T_s$ is continuous we have $T_{s_a^+}=\tau_a^+$, thus we obtain  for any positive $q_0$
\beqq
\nonumber
&&\e_z \left[  \ind \left(Z_{s_a^+} \in  \d y  \; ; \; s_a^+ \le u \; ; \; s_a^+<s_0^- \right)\right]=
\e_z \left[ \ind \left(X_{\tau_a^+} \in \d y \; ; \; \tau_a^+ \le  T_u \; ; \; \tau_a^+ < \tau_0^-\right) \right]\\ 
&=&
\int\limits_{\r^+} \e_z \left[ \ind \left(X_{\tau_a^+} \in \d y \; ; \; \tau_a^+ \le  t \; ; \; \tau_a^+ < \tau_0^- \right) \right] \p(T_u \in \d t)=
\frac{1}{2\pi \i} \int\limits_{q_0+\i \r} g^{(q)}(a, z, {\rm d} y ) \e \left[ e^{q T_u} \right] q^{-2} \d q,
\eeqq
where we have used the fact that 
\beqq
\e_z \left[ \ind \left(X_{\tau_a^+} \in \d y \; ; \; \tau_a^+ \le  t \; ; \; \tau_a^+ < \tau_0^- \right) \right]=
\frac{1}{2\pi \i } \int\limits_{q_0+\i \r} g^{(q)}(a, z, {\rm d} y ) e^{q t} q^{-2} \d q,
\eeqq
which follows from  (\ref{def_gq}) by inverting the Laplace transform in $q$. Thus we see that if we choose the time change process $\{T_s: t\geq 0\}$ for which the
 Laplace transform $\e [ \exp(q T_s)]$ is known in closed form, then computing quantities for the time-changed process $Z$ is essentially identical to computing the same quantities 
for the process $X$ itself.  
%**************************************************************************************************
%**************************************************************************************************
%**************************************************************************************************

\bibliographystyle{siam}
\bibliography{meromorphic_processes}

%**************************************************************************************************
%**************************************************************************************************
%**************************************************************************************************

\newpage

\begin{figure}
\centering
\subfloat[][Set 1: $f_1(x,y)$]{\label{fig_set1_2d}\includegraphics[height =6cm]{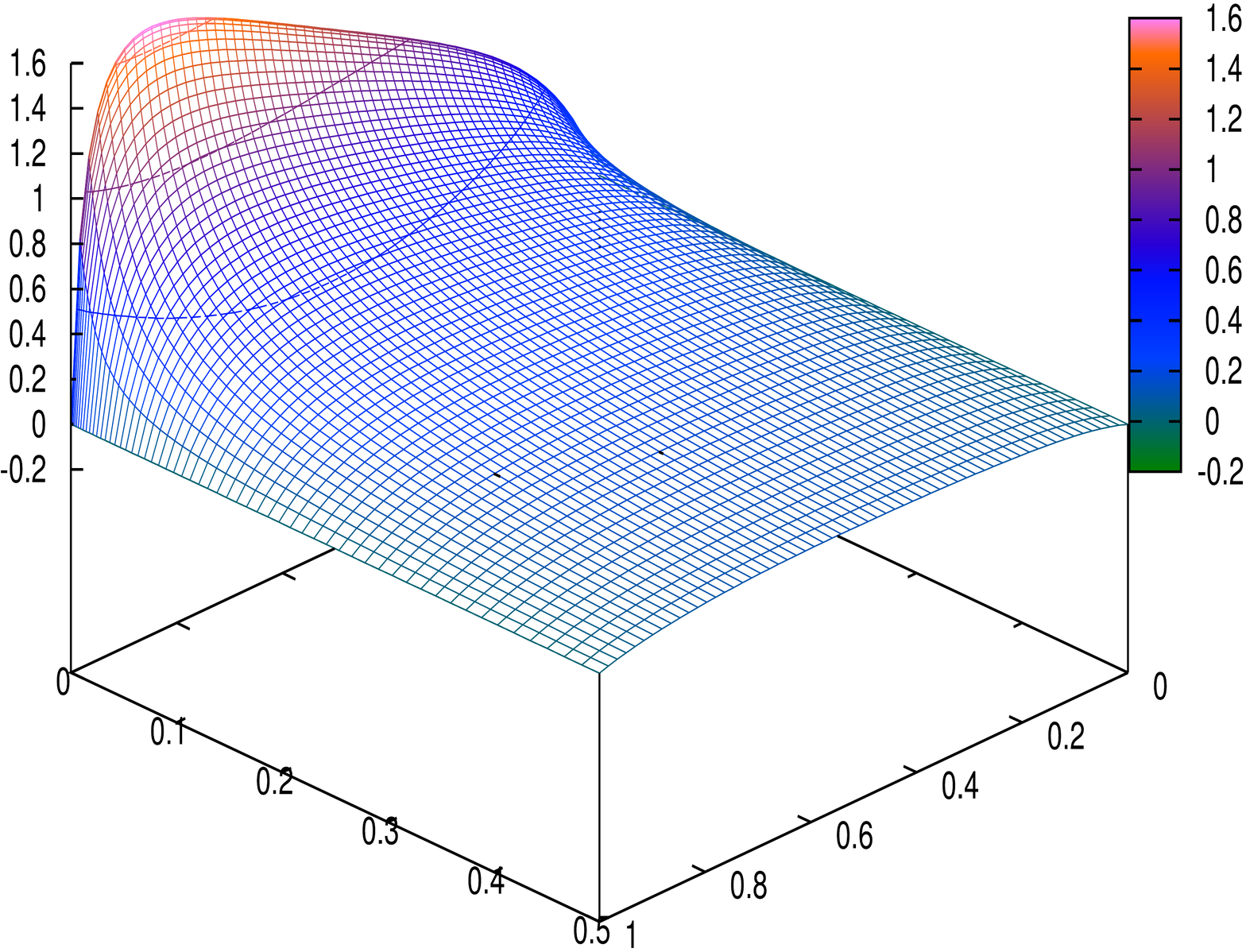}} 
\subfloat[][Set 1:$\; f_2(x)$ (solid) and $f_3(x)$ (circles)]{\label{fig_set1_1d}\includegraphics[height =6cm]{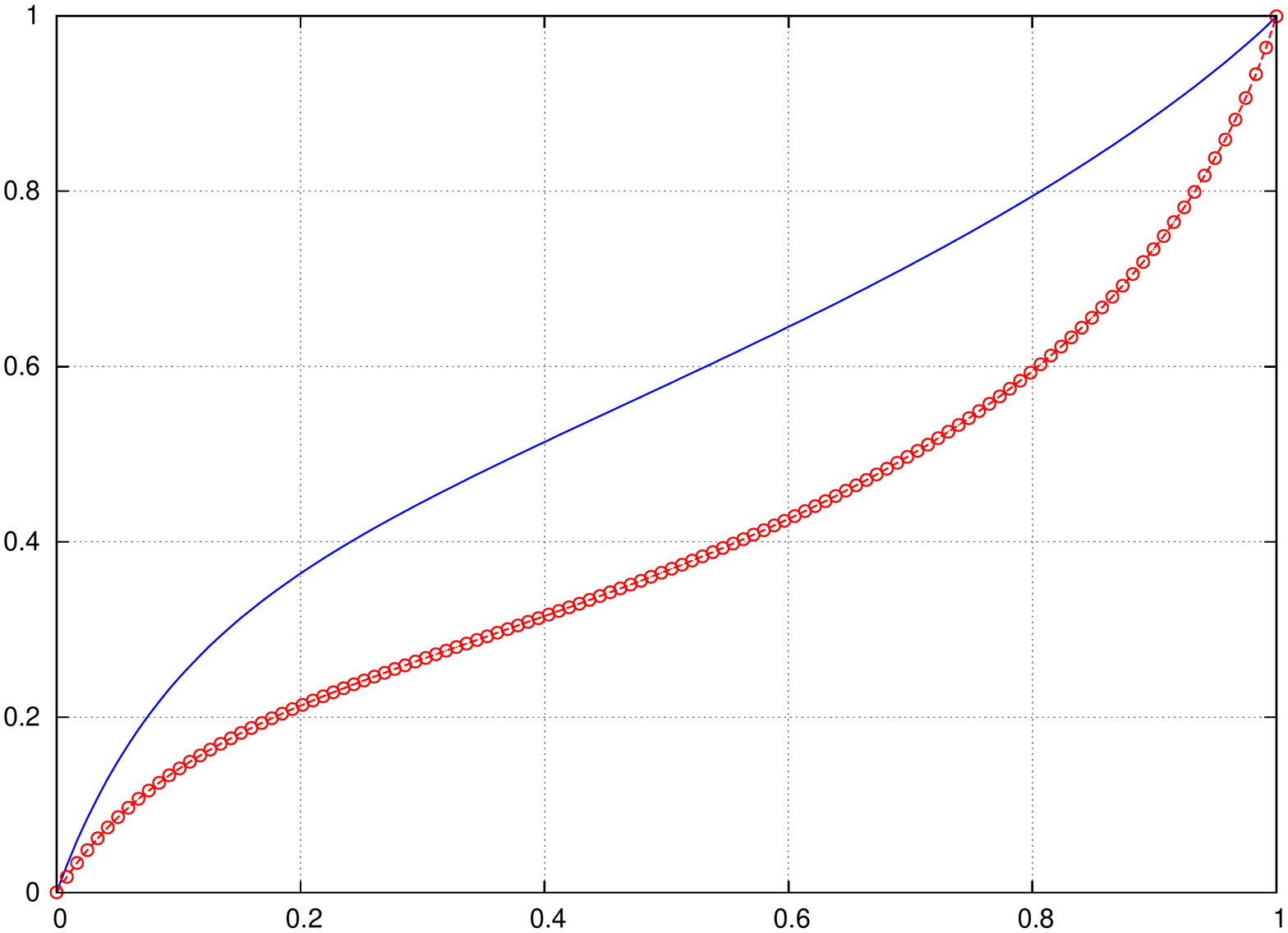}} \\
\subfloat[][Set 2: $f_1(x,y)$]{\label{fig_set2_2d}\includegraphics[height =6cm]{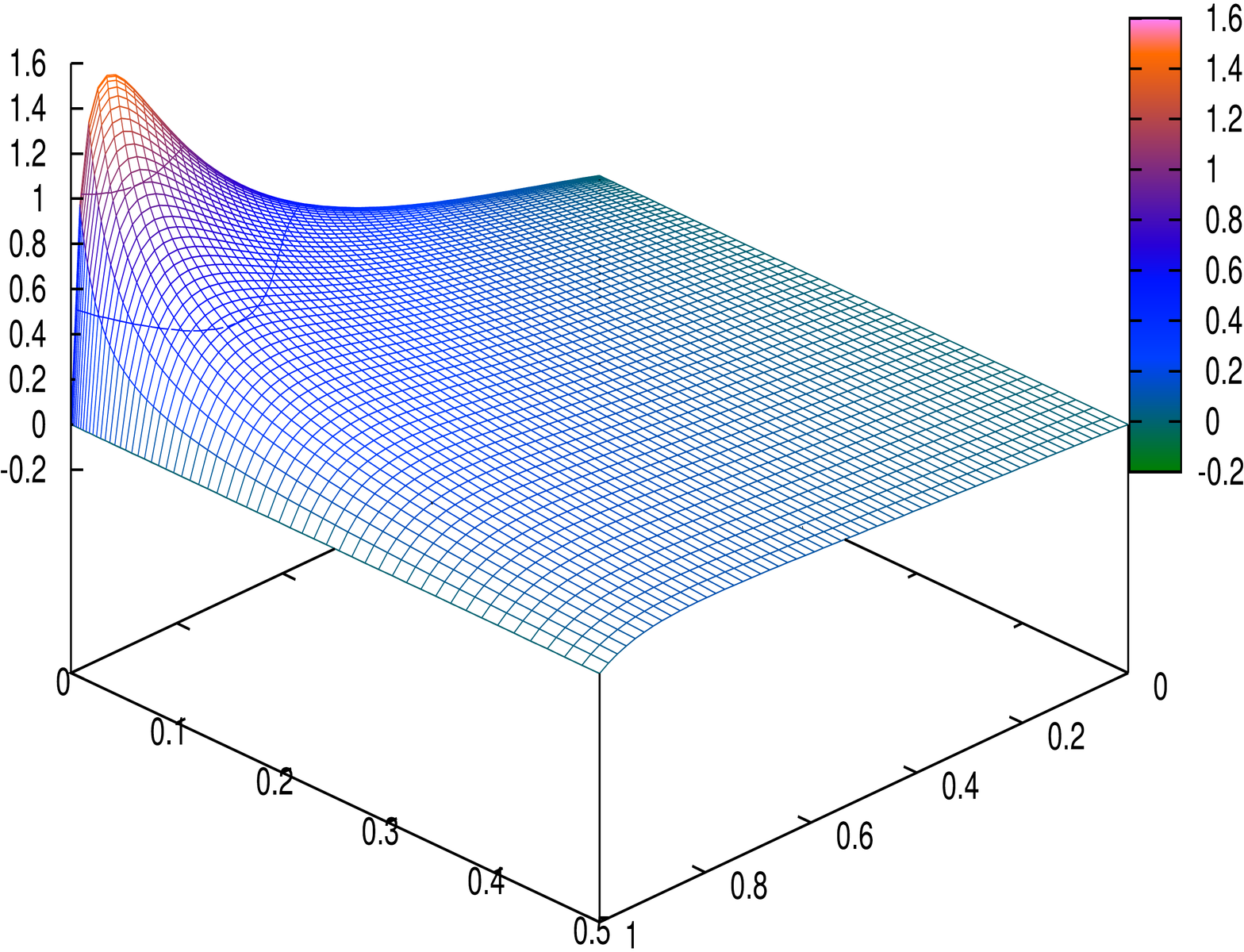}} 
\subfloat[][Set 2:$\; f_2(x)$ (solid) and $f_3(x)$ (circles)]{\label{fig_set2_1d}\includegraphics[height =6cm]{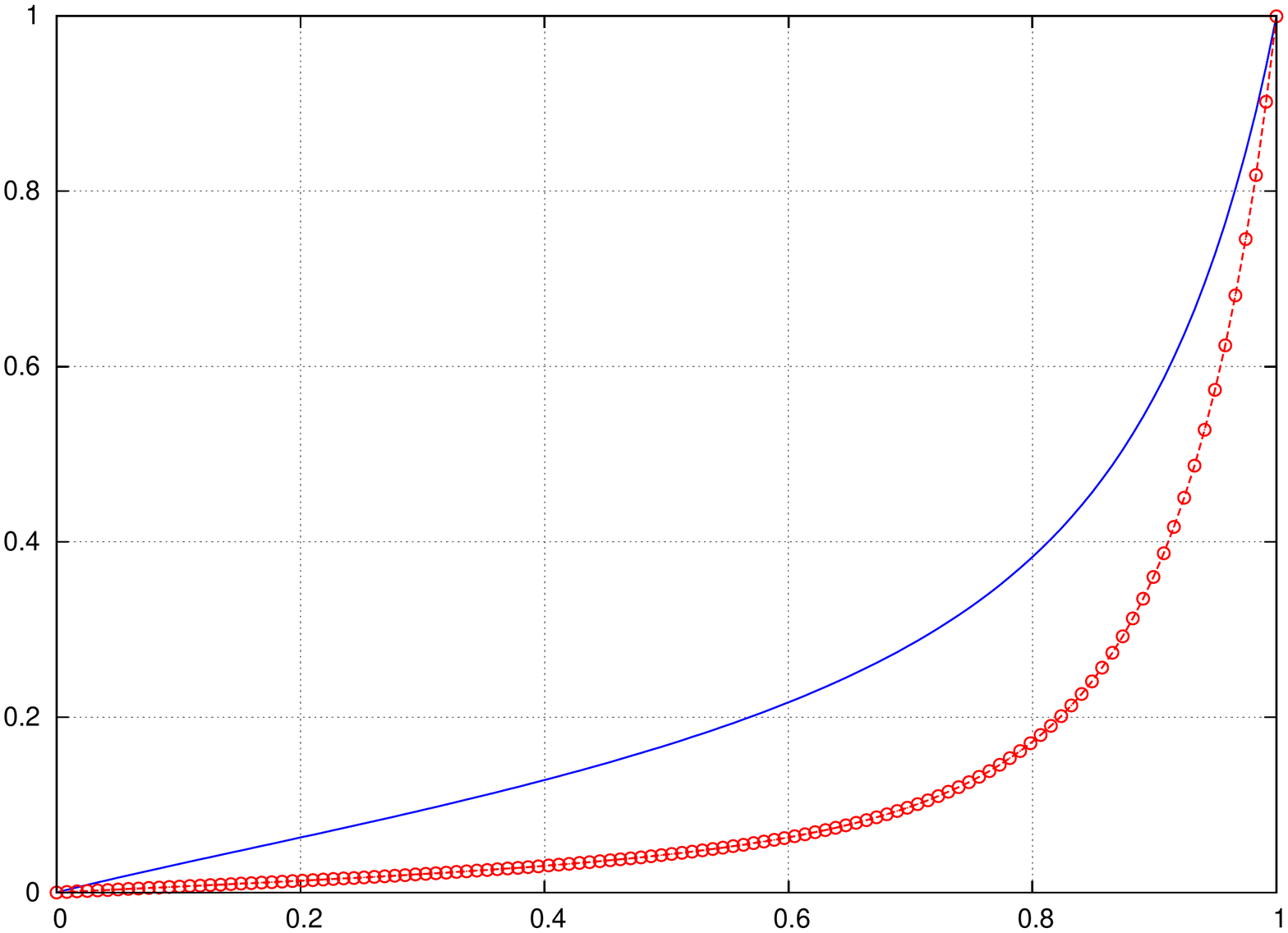}} 
\caption{Unbounded variation case ($\sigma=0.5$): computing the density of the overshoot $f_1(x,y)$ ($x\in (0,1)$, $y\in (0,0.5)$), probability of first exit $f_2(x)$
and probability of creeping $f_3(x)$ for parameter Set 1 (positive drift $\mu=1$) and Set 2 (negative drift $\mu=-1$).} 
\label{fig_Set12}
\end{figure}

\begin{figure}
\centering
\subfloat[][Set 3: $f_1(x,y)$]{\label{fig_set3_2d}\includegraphics[height =6cm]{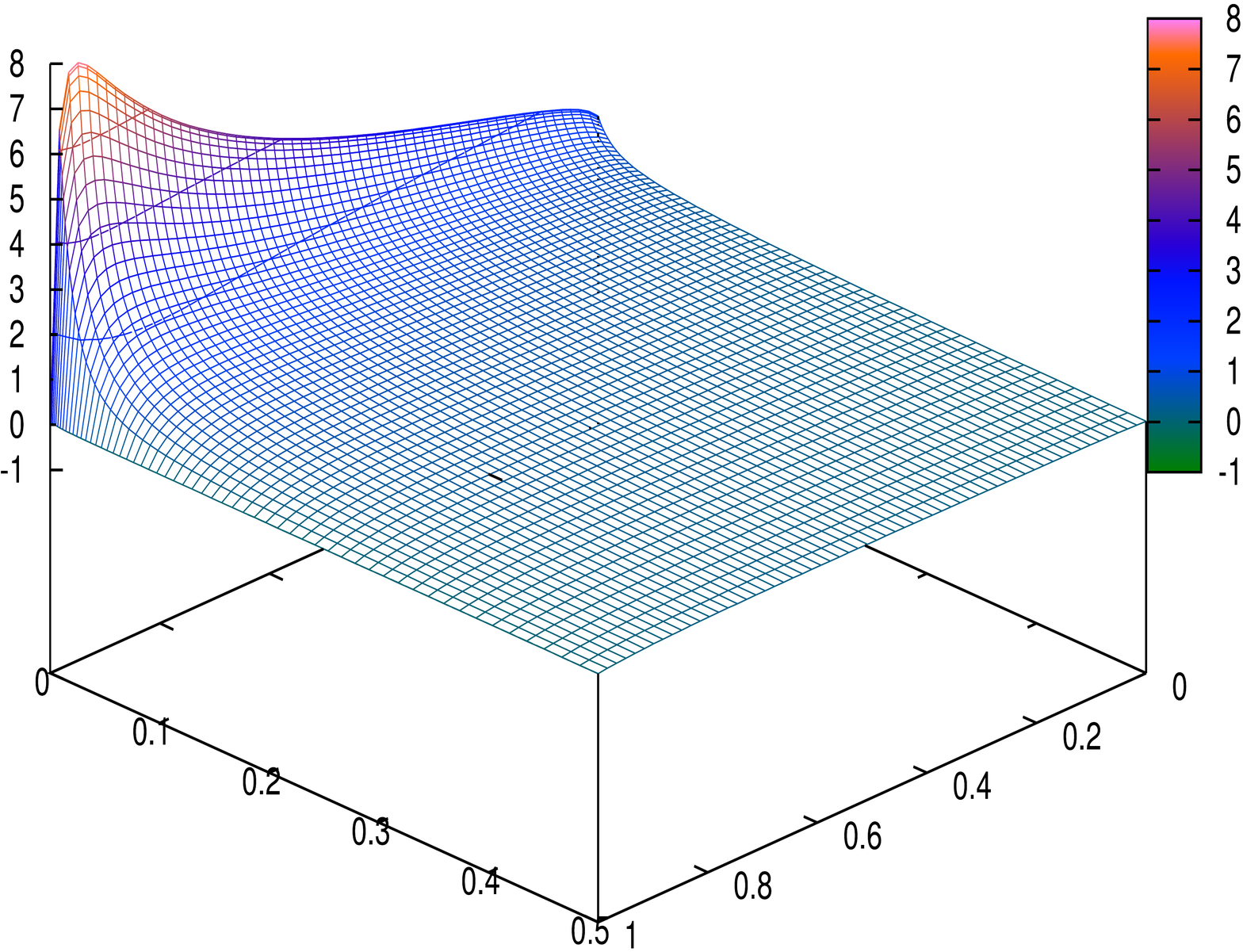}} 
\subfloat[][Set 3:$\; f_2(x)$ (solid) and $f_3(x)$ (circles)]{\label{fig_set3_1d}\includegraphics[height =6cm]{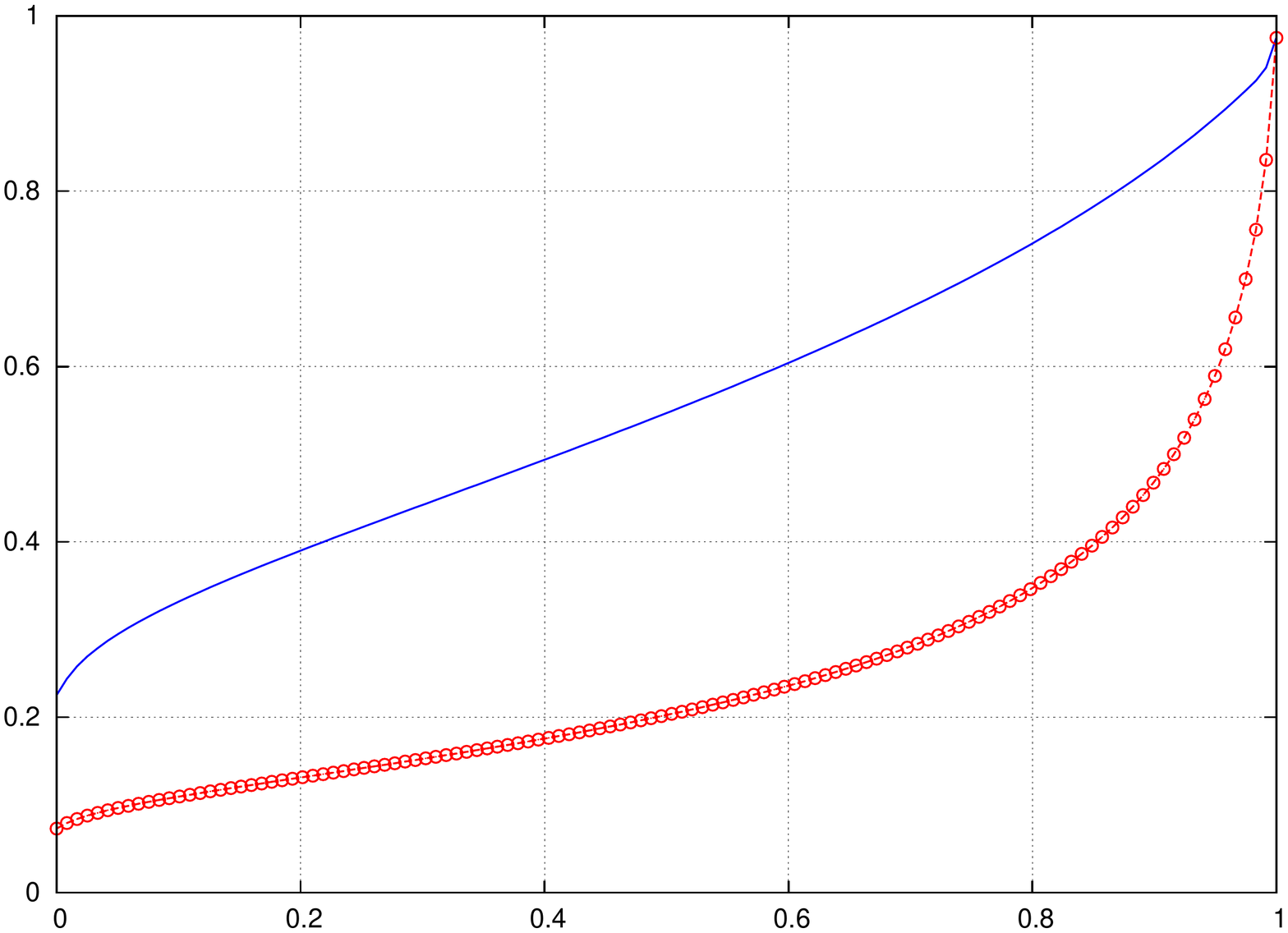}} \\
\subfloat[][Set 4: $f_1(x,y)$]{\label{fig_set4_2d}\includegraphics[height =6cm]{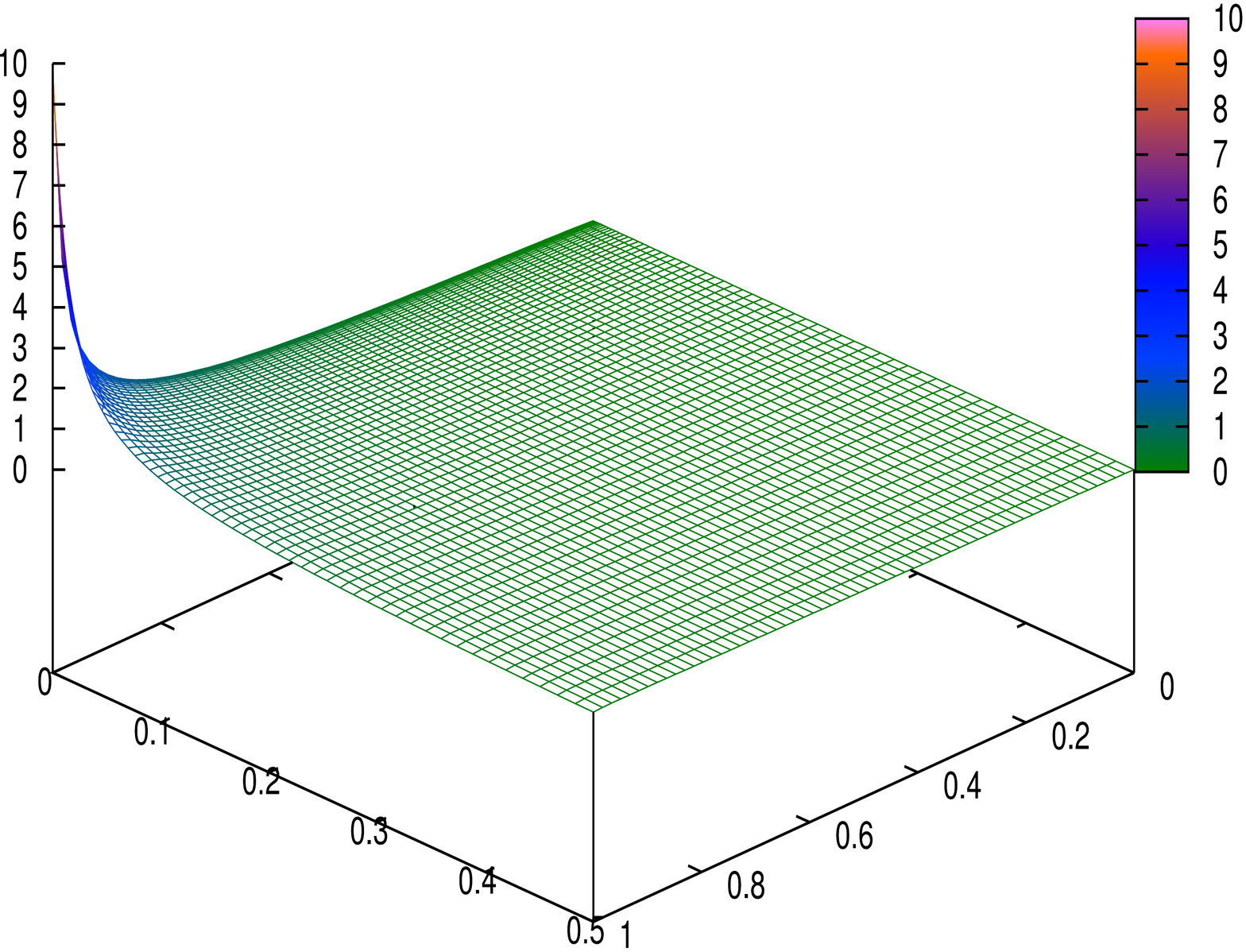}} 
\subfloat[][Set 4:$\; f_2(x)$ (solid) and $f_3(x)$ (circles)]{\label{fig_set4_1d}\includegraphics[height =6cm]{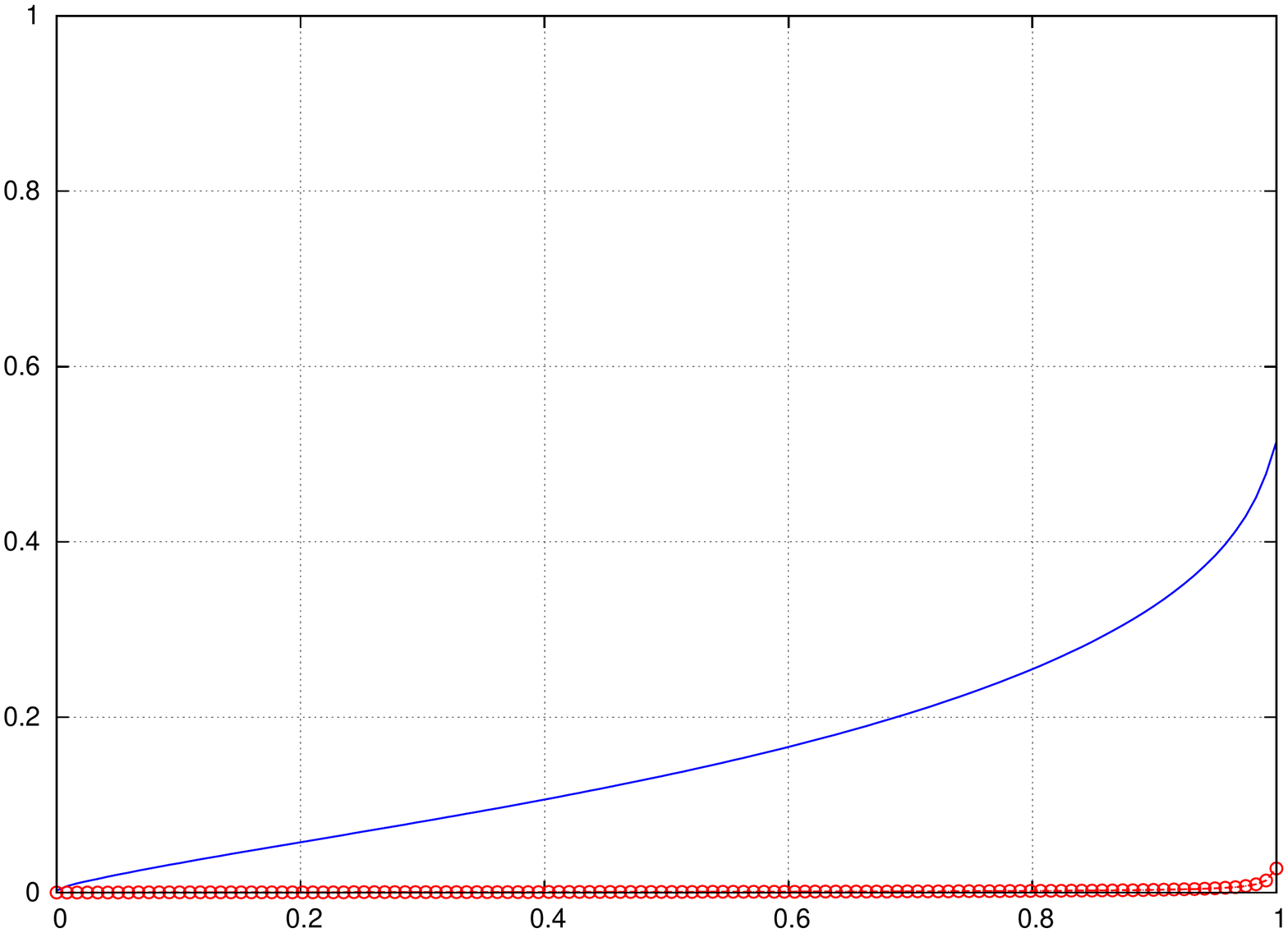}} 
\caption{Bounded variation case ($\sigma=0$): computing the density of the overshoot $f_1(x,y)$ ($x\in (0,1)$, $y\in (0,0.5)$),  probability of first exit $f_2(x)$
and probability of creeping $f_3(x)$ for parameter Set 3 (positive drift $\mu=1$) and Set 4 (negative drift $\mu=-1$).} 
\label{fig_Set34}
\end{figure}

\begin{figure}
\centering
\subfloat[][Set 1]{\label{fig_Vdxds_set1}\includegraphics[height =6cm]{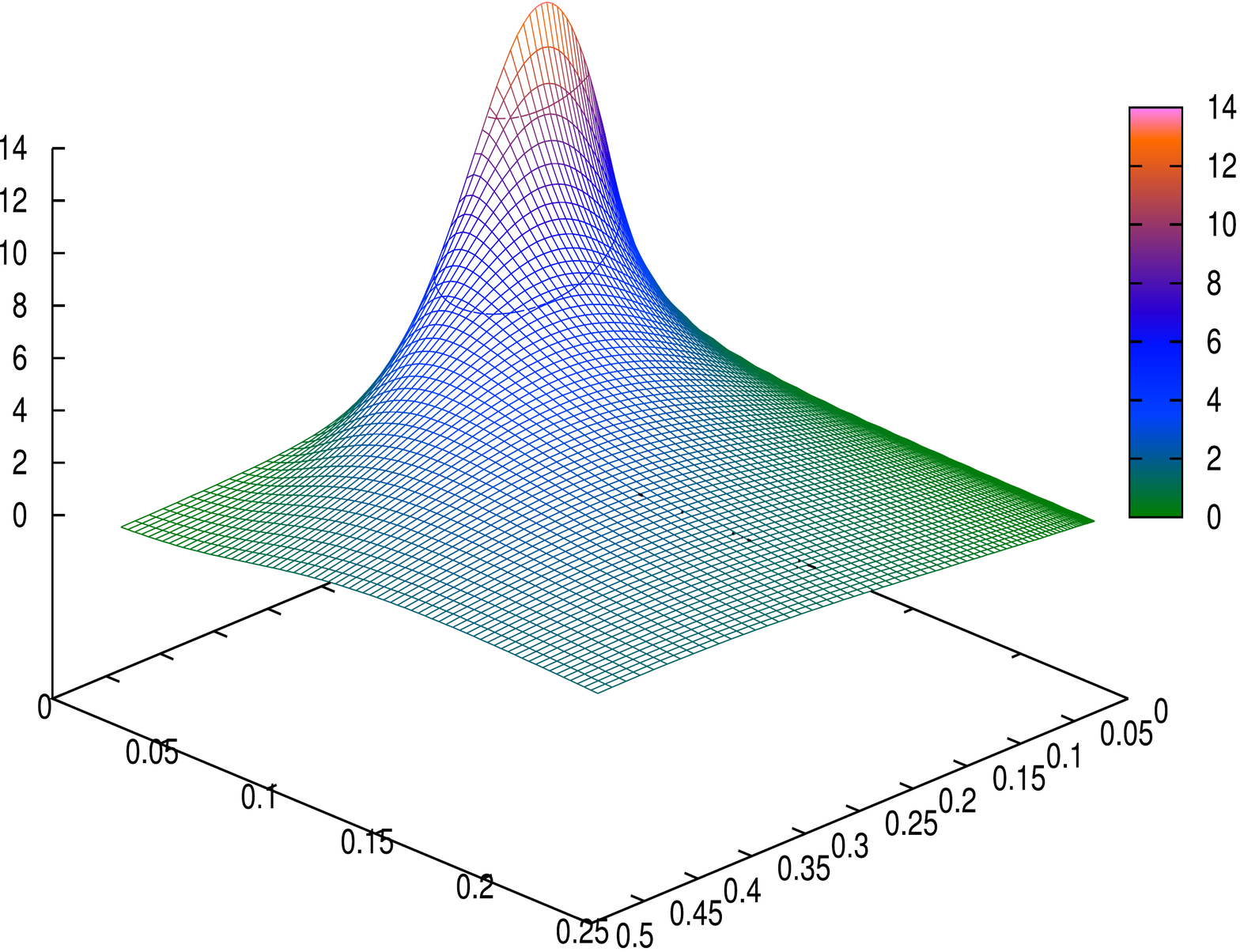}} 
\subfloat[][Set 2]{\label{fig_Vdxds_set2}\includegraphics[height =6cm]{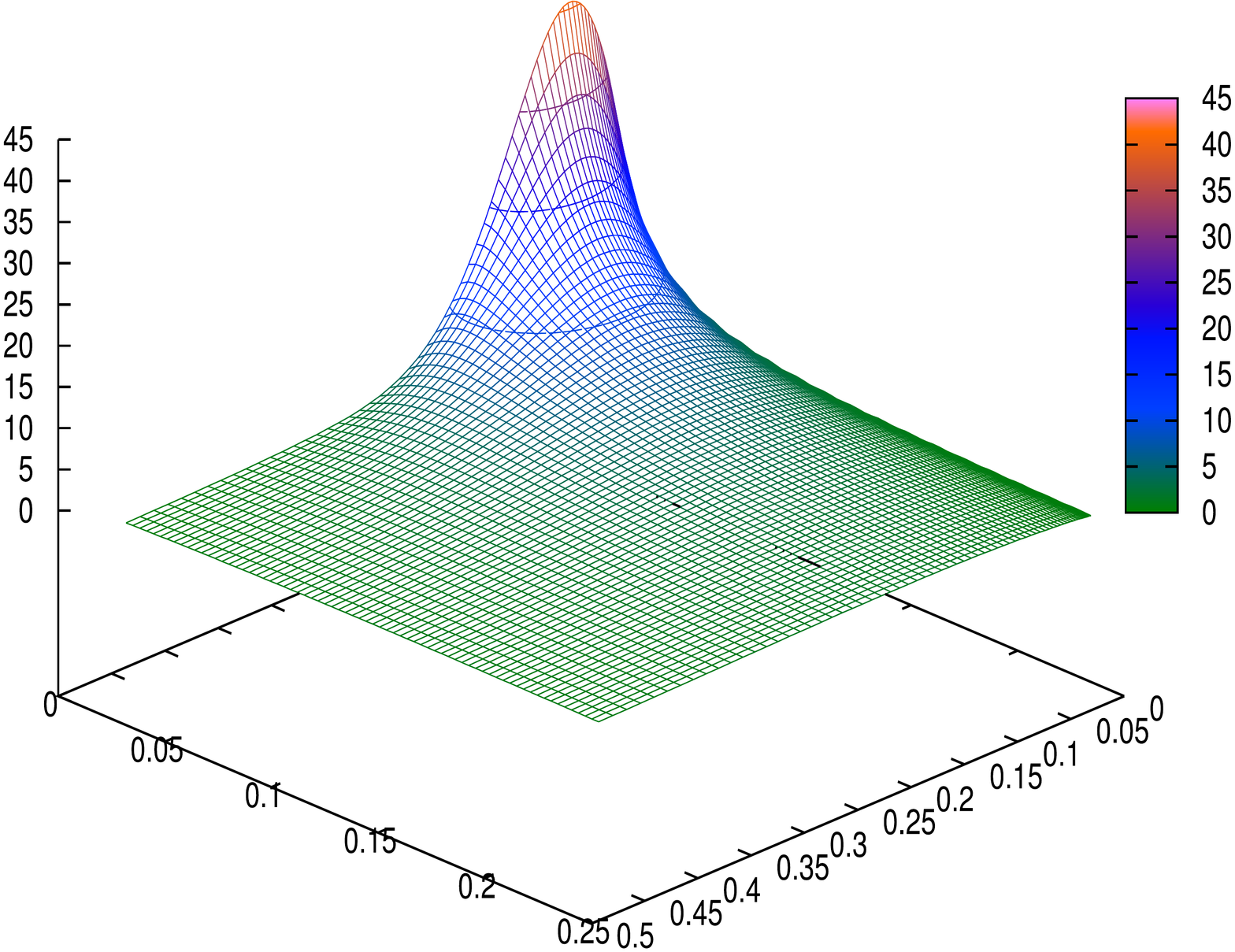}} \\
\subfloat[][Set 3]{\label{fig_Vdxds_set3}\includegraphics[height =6cm]{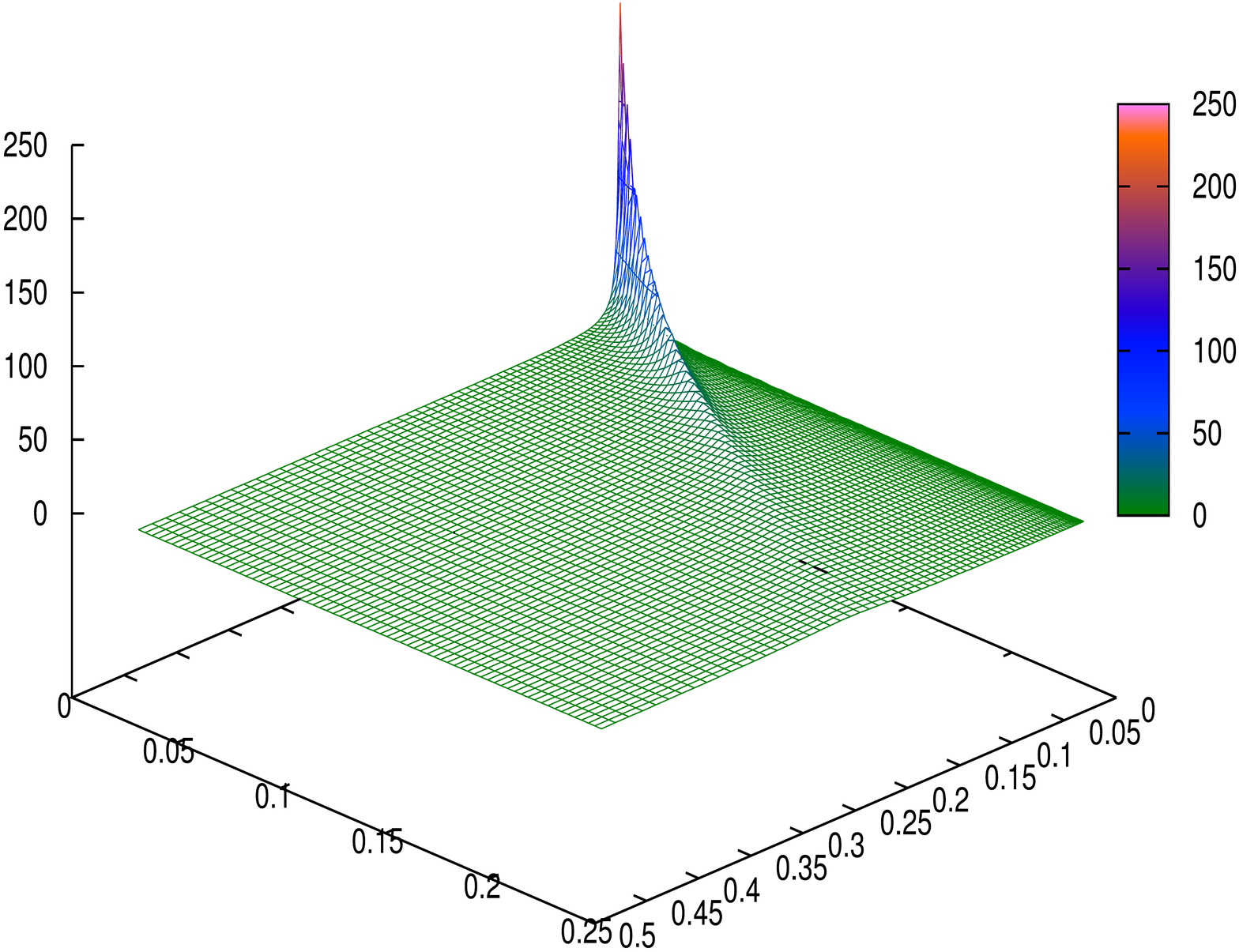}} 
\subfloat[][Set 4]{\label{fig_Vdxds_set4}\includegraphics[height =6cm]{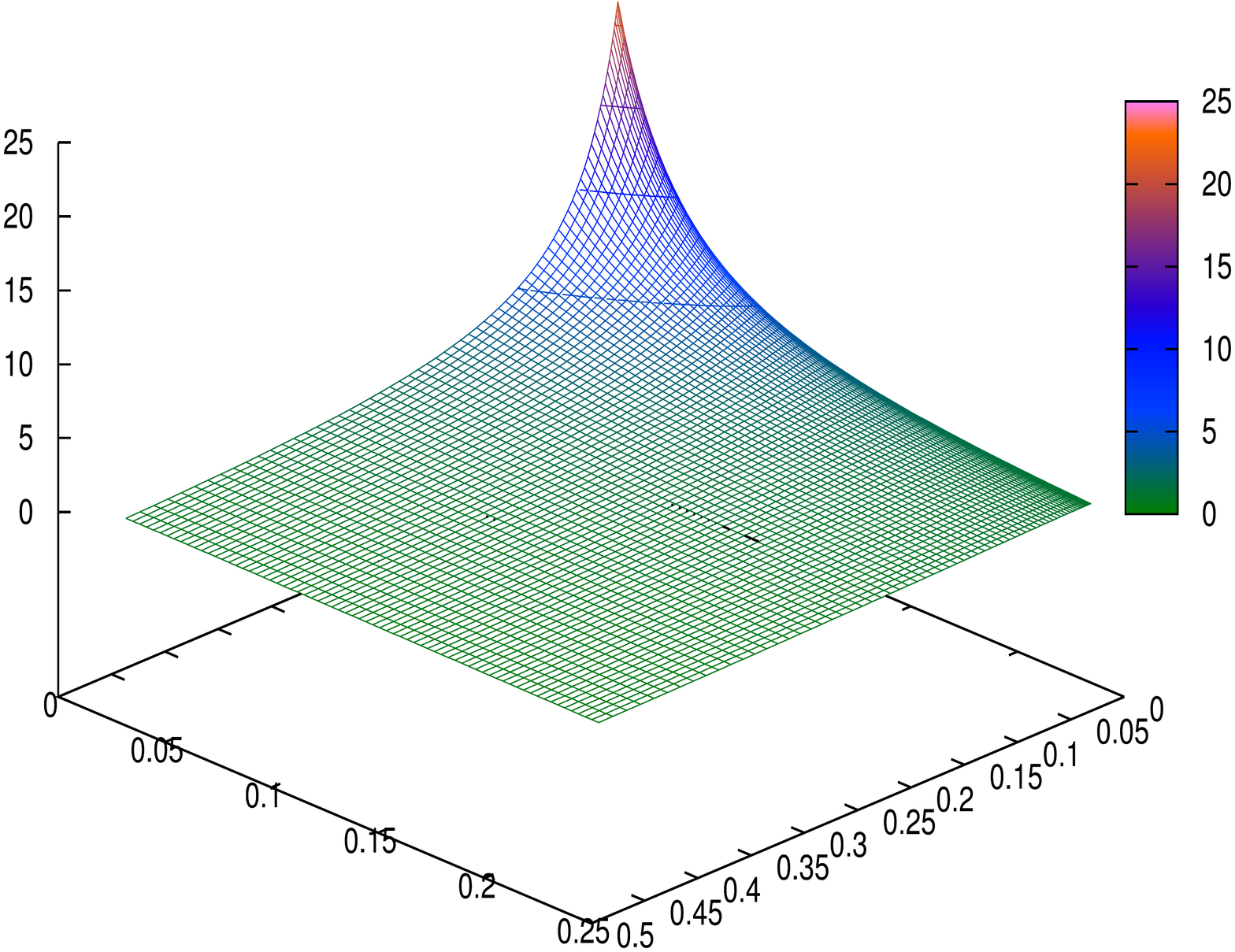}} 
\caption{Computing the density of the renewal measure ${\mathcal U}(\d s, \d x)$,  $s \in (0,0.25)$ and $x\in (0,0.5)$.} 
\label{fig_Vdxds}
\end{figure}

%**************************************************************************************************
%**************************************************************************************************
%**************************************************************************************************
\end{document}